\documentclass[12pt,twoside]{article}
\usepackage{amsthm,amsmath,amssymb,natbib,a4wide,color,graphicx,dsfont,xr}
\numberwithin{equation}{section}
\usepackage[T1]{fontenc}
\usepackage[utf8]{inputenc}
\usepackage[shortlabels]{enumitem}
\RequirePackage[colorlinks,citecolor=blue,urlcolor=blue]{hyperref}

%Theorems***************************************
\theoremstyle{plain} \newtheorem{theorem}{Theorem}[section]
\theoremstyle{plain} \newtheorem{proposition}[theorem]{Proposition}
\theoremstyle{plain} \newtheorem{lemma}[theorem]{Lemma}
\theoremstyle{plain} \newtheorem{corollary}[theorem]{Corollary}
\theoremstyle{definition} \newtheorem{definition}[theorem]{Definition}
\theoremstyle{definition} 
\theoremstyle{remark} \newtheorem{remark}[theorem]{Remark}
\theoremstyle{remark} \newtheorem{example}[theorem]{Example}

%Local definitions*****************************
\newcommand{\rmd}{\mathrm{d}}
\newcommand{\rme}{\mathrm{e}}
\newcommand{\cB}{\mathcal{B}}

\newcommand{\cX}{\mathcal{X}}
\newcommand{\cT}{\mathcal{T}}
\newcommand{\cN}{\mathcal{N}}
\newcommand{\bbM}{\mathbb{M}}
\newcommand{\bbE}{\mathbb{E}}

\newcommand{\bbP}{\mathbb{P}}
\newcommand{\bbD}{\mathbb{D}}

%\author{Cl\'{e}ment Dombry\footnote{Université de Franche-Comt\'{e}, Laboratoire de Mathématiques de Besançon, 16 route de Gray, 25030 Besançon cedex, France.  \texttt{Email: clement.dombry(at)univ-fcomte.fr }}  }

%\and Charles Tillier\footnote{ Université Paris-Saclay, UVSQ, CNRS, Laboratoire de Mathématiques de Versailles, 45 avenue des états-unis, 78000 Versailles, France.\newline  \texttt{Email: charles.tillier@uvsq.fr}} \and Olivier Wintenberger\footnote{Sorbonne Université, Laboratoire de probabilités et modèles aléatoires, 4 place Jussieu, 75005 Paris, France.\newline  \texttt{Email: olivier.wintenberger@upmc.fr}}

\begin{document}

\title{Hidden regular variation for point processes and the single/multiple large point heuristic}

\author{Cl\'{e}ment Dombry\footnote{Université de Franche-Comt\'{e}, Laboratoire de Mathématiques de Besançon, 16 route de Gray, 25030 Besançon cedex, France.  \texttt{Email: clement.dombry(at)univ-fcomte.fr }} \and Charles Tillier\footnote{ Université Paris-Saclay, UVSQ, CNRS, Laboratoire de Mathématiques de Versailles, 45 avenue des états-unis, 78000 Versailles, France.  \texttt{Email: charles.tillier@uvsq.fr}} \and Olivier Wintenberger\footnote{Sorbonne Université, Laboratoire de probabilités et modèles aléatoires, 4 place Jussieu, 75005 Paris, France.  \texttt{Email: olivier.wintenberger@upmc.fr}}  }

\maketitle

\begin{abstract}
We consider regular variation for marked point processes with independent heavy-tailed marks and prove a single large point heuristic: the limit measure is concentrated on the cone of point measures with one single point. We then investigate successive hidden regular variation removing the cone of point measures with at most $k$ points, $k\geq 1$, and prove a multiple large point phenomenon: the limit measure is concentrated on the cone of point measures with $k+1$ points. We show how these results imply  hidden regular variation in Skorokhod space of the associated risk process, in connection with the single/multiple large point heuristic from \cite{RBZ19}. Finally, we provide an application to risk theory in a reinsurance model where the $k$ largest claims are covered and we study the asymptotic behavior of the residual risk.
\end{abstract}

%\textbf{Regular variation, point process, hidden regular variation, risk theory.}

\section{Introduction}

Regular variation is a fundamental concept in the analysis of rare event probabilities for heavy-tailed models that was widely popularized by  \citet{Resnick2007, Resnick2008} and finds natural applications in risk theory \citep{Embr1997, Assmussen2010, Mikosch, HL11}.

Regular variation was first considered on the finite-dimensional space $\mathbb{R}^d$ and formulated in terms of vague convergence on the compactified space $E=[-\infty,+\infty]^d$. In such a context, a random element $X \in \mathbb{R}^d$ is said to be regularly varying  if there exists a positive sequence $a_n\to\infty$ such that 
\[
n\mathbb{P}(a_n^{-1}X\in \cdot)\stackrel{v}\longrightarrow \mu(\cdot),\quad \mbox{as $n\to\infty$},
\]
where $\stackrel{v}\longrightarrow$ stands for vague convergence on $E\setminus\{0\}$. If the limit measure $\mu$  concentrates only on the axes 
$F=\cup_{i=1}^d \big( \{0\}^{i-1}\times {\mathbb R}\times \{0\}^{d-i}\big)$,
we say that $X$ has  asymptotically independent components.
One can then wonder if some dependence among the components appears in a different regime. The concept of hidden regular variation was introduced for this purpose by \cite{Resnick2002} and formulated in terms of vague convergence on the space $E\setminus F$  of the form $n\mathbb{P}(\tilde a_n^{-1}X\in \cdot)\stackrel{v}\longrightarrow \tilde\mu(\cdot)$. We refer to \cite{heffernan_resnick_2005},  \cite{Maulik2004}, \cite{Mitra2011} for further developments on finite-dimensional hidden regular variation.

To go beyond finite-dimensional spaces,  $M_0$-convergence on a general metric space was introduced by \cite{LR06}; see also \cite{hult2010large} for the seminal treatment of $M_0$-convergence on spaces of point measures. This notion avoids the compactification procedure and replaces the compactly supported continuous test functions from vague convergence by  bounded continuous functions with support bounded away from the origin. This results in an elegant theory that provides a convenient framework for regular variation in infinite dimensional spaces. Infinite dimensional regular variation theory includes the analysis of heavy-tailed stochastic processes \citep{HL05,HL07,HLMS05} or times series \citep{basrak:segers:2009,DHS18}; see also the seminal contributions on regular variation on infinite dimensional spaces in \cite{araujo1980central}, \cite{gine1990max} and \cite{de2001convergence}.

In order to consider hidden regular variation in a function space, the theory of $M$-convergence was further extended in \cite{LRR14} removing cones larger than the origin. The main example provided there is the infinite dimensional space $\mathbb{R}_+^\infty$  with application to hidden regular variation of L\'evy processes in Skorokhod space $\mathbb{D}([0,1],\mathbb{R})$. \cite{RBZ19} provide further insight into the  hidden regular variation for regularly varying L\'evy processes and random walks in connection with sample path large deviations. They propose a single/multiple large jump heuristic where the limit measure in (hidden) regular variation is supported by the cone of functions with one single (multiple) large jumps; see also \cite{pinelis1981problem} for a seminal contribution on path-wise large deviations for heavy-tailed stochastic processes.

In this paper, we provide the first detailed analysis of (hidden) regular variation properties for point processes. Point processes are an important tool in applied probability and stochastic modelling and are widely used in risk theory. The L\'evy processes mentioned above can be seen as  functionals of Poisson point processes, so that one can expect to deduce the regular variation of the former from those of the latter \textit{via} a continuous mapping theorem. For these reasons, we believe regular variation at the level of point process is an important and fundamental conceptual tool. A first result stating the regular variation of  Poisson point processes with regularly varying intensity measure appears in \cite{DHS18} with a single large point heuristic. We propose here an analysis beyond the Poisson case and consider independently marked point processes (with regularly varying mark distribution) and successive hidden regular variation with different orders. We derive a general criterion for (hidden) regular variation in terms of pointwise Laplace functional and apply it to several models of increasing complexity: marked Poisson point processes, independently marked point processes and triangular arrays of independently marked point processes. For all these models, a similar structure for successive hidden regular variation is discovered and a single/multiple jump heuristic is proved.

The structure of the paper is the following. In Section~\ref{sec:background}, we set the necessary background on measure spaces and regular variation following the lines of \cite{LRR14}. We also settle the point process framework and provide in  Theorem~\ref{theo:cv-N-Nk} a characterization of hidden regular variation of point processes in terms of convergence of their Laplace functionals. Section~\ref{sec:RVPP} states the main results of this paper on the successive hidden regular variation for independently marked point processes. In Section~\ref{sec:app}, we use the continuous mapping theorem to derive the hidden regular variation properties in Skorokhod space of the risk processes associated to a marked point process. An application to a reinsurance problem is also discussed with the asymptotic analysis of the residual risk after reinsurance of the largest claims. All the proofs are gathered in Sections~\ref{sec:proof2},~\ref{sec:proof3} and~\ref{sec:proof1}.

\medskip
\textbf{Notation and shortcuts:} In the following, $[a]$ denotes the integer part of $a$, $(a)_+=\max(a,0)$ the positive part of $a$ and $a \wedge b$ the minimum between $a$ and $b$, for $a,b \in \mathbb{R}$.  The indicator function of the set $A$ is denoted by $\mathds{1}_{A}$. For a set $A$, $\mathrm{int}A$, $\mathrm{cl}A$ and $\partial A $ are respectively the interior, closure and boundary of $A$. The equivalence of two real sequences $u_n\sim v_n$ means that $u_n/v_n\to 1$ as $n\to\infty$; the notation $u_n=o(v_n)$ and $u_n=O(v_n)$ mean respectively that  $u_n/v_n\to 0$ as $n \to \infty$ and that $u_n/v_n$ remains bounded  as $n\to\infty$.  We denote by $\varepsilon_x(\cdot)$  the Dirac measure at $x$ and by $\otimes$ the tensor product of measures.

\section{Background on measure spaces, regular variation and point processes}\label{sec:background}

We set up in this section  the mathematical background  necessary for this paper.  
We start by defining the framework and notation for measure spaces and then turn to  regular variation and hidden regular variation, following the lines of \cite{LRR14}, Sections 2 and 3. We   finally present some background on point processes.

\subsection{Background on measure spaces}  
We denote by $(E, d_E)$  a complete separable metric space endowed with its Borel  $\sigma$-algebra $\mathcal{B}(E)$ generated by the open balls  $B_{x,r}^E= \{ x' \in E : d_E(x,x')<r \}$, $x\in E$, $r>0$.  When there is no confusion, we omit the superscript $E$ and  write simply  $B_{x,r}$. The $r$-neighborhood of a subset $A\subset E$ is the open set $A^r$ of points that are at distance less than $r$ from $A$, that is $A^r=\cup_{x\in A} B_{x,r}$. We say that a subset $B$ is bounded away from $A$ if   $B\cap A^r=\emptyset$ for some $r>0$.

The space of bounded continuous real-valued  functions on $E$ is denoted by $\mathcal{C}_b(E)$. The set of finite Borel measures on  $E$ is denoted by $\mathbb{M}_b (E)$. A sequence of measures $(\mu_n)_{n\geq 1}$ is said to converge weakly to $\mu$ in  $\mathbb{M}_b (E)$, denoted $\mu_n\stackrel{\mathbb{M}_b (E)}{\longrightarrow} \mu$,  if $\int f\rmd \mu_n\to \int f\rmd\mu$ for all $f\in \mathcal{C}_b(E)$. The Prohorov distance on $\mathbb{M}_b(E)$ defined by
	\begin{align*}
	&d_{\mathbb{M}_b(E)}(\mu_1,\mu_2)=\inf_{\varepsilon>0}\left\{\mu_1(A)\leq \mu_2(A^\varepsilon)+\varepsilon \ \mbox{for all } A\in\mathcal{B}(E) \right\}
	\end{align*}
metrizes weak convergence and, equipped with this metric, $\mathbb{M}_b(E)$ is a complete separable metric space; see \cite{Kallenberg}.
	
For $F\subset E$ a closed subset, we denote by $\mathbb{M}(E\setminus F)$ the set of Borel measures $\mu$ on $E\setminus F$ that assign finite mass on  sets bounded away from $F$, that is, such that 
$\mu(E\setminus F^r)<\infty$ for all $r>0$. In the following proposition, corresponding to  \citet[Theorem 2.2]{LRR14}, we provide equivalent characterizations of convergence in $\mathbb{M}(E\setminus F)$.

\begin{proposition}[Convergence in $\mathbb{M}(E\setminus F)$]\label{ConvMEF}
Let $\mu_n,\mu \in \mathbb{M}(E\setminus F)$. The convergence $\mu_n \to \mu$ in $\mathbb{M}(E\setminus F)$, denoted $\mu_n\stackrel{\mathbb{M}(E\setminus F)}\longrightarrow \mu$, is defined  by the following equivalent properties:
\begin{enumerate}[i)]
\item  for all $f\in\mathcal{C}_b(E)$ with support bounded away from $F$,
$$\int_{E}f\rmd \mu_n\to\int_{E}f\rmd \mu\quad \mbox{as $n\to\infty$};$$
\item for all $A\in\cB(E)$ bounded away from $F$,
$$\mu(\mathrm{int}A)\leq \mathop{\mathrm{liminf}}_{n\to\infty} \mu_n(A)\leq \mathop{\mathrm{limsup}}_{n\to\infty} \mu_n(A)\leq \mu(\mathrm{cl}A);$$
\item there exists a  sequence $r_i\downarrow 0$ such that $\mu_n^{r_i}\longrightarrow \mu^{r_i}$ in $\mathbb{M}_b(E\setminus F^{r_i})$ as $n\to\infty$, for each $i\geq 1$, where $\mu_n^{r_i}$ (resp. $\mu^{r_i}$) denotes the restriction of $\mu_n$ (resp. $\mu$) to $E\setminus F^{r_i}$.
\end{enumerate}
\end{proposition}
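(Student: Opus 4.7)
The plan is to establish the chain of implications (iii) $\Rightarrow$ (i) $\Rightarrow$ (ii) $\Rightarrow$ (iii), which is the natural way to mirror the classical Portmanteau theorem for finite measures adapted to the ``bounded away from $F$'' setting. The main idea throughout is that a set or function that is bounded away from $F$ automatically lives inside $E\setminus F^{r}$ for some $r>0$, and on such a subspace the measures involved are finite, so we can leverage Prohorov/Portmanteau for $\bbM_b$.

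First I would prove (iii) $\Rightarrow$ (i). Take $f\in\mathcal{C}_b(E)$ with $\mathrm{supp}\, f$ bounded away from $F$. By definition there exists $r>0$ with $\mathrm{supp}\, f \cap F^r=\emptyset$, and since the $r_i$ in (iii) decrease to $0$, we can choose $i$ large enough so that $r_i<r$; then $\mathrm{supp}\, f \subset E\setminus F^{r_i}$. Since $E\setminus F^{r_i}$ is open, $f$ restricts to a bounded continuous function on this subspace, and $\int f\,\rmd\mu_n=\int f\,\rmd\mu_n^{r_i}\to\int f\,\rmd\mu^{r_i}=\int f\,\rmd\mu$ by the assumed weak convergence of the restrictions.

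Next, (i) $\Rightarrow$ (ii) is obtained by the standard sandwich argument used in the classical Portmanteau theorem, carried out while keeping support away from $F$. Given $A$ bounded away from $F$, fix $\delta>0$ with $A\cap F^{2\delta}=\emptyset$. For small $\varepsilon\in (0,\delta)$, approximate $\mathds{1}_A$ from above and $\mathds{1}_{\mathrm{int}\,A}$ from below by functions of the form $f_\varepsilon(x)=(1-\varepsilon^{-1}d_E(x,A))_+$ and $g_\varepsilon(x)=(1-\varepsilon^{-1}d_E(x,A^c))_+\,\mathds{1}_{\mathrm{int}\,A}(x)$, each of which is continuous, bounded, and still supported in $F^\delta{}^c$ since $\varepsilon<\delta$. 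Applying (i) to these continuous functions and letting $\varepsilon\downarrow 0$ yields the two Portmanteau inequalities in (ii).

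Finally, (ii) $\Rightarrow$ (iii) is the step where I would be most careful, but the argument is routine. The function $r\mapsto \mu(\partial F^r)$ is nonnegative and $\sum_r \mu(\partial F^r)$ is controlled on $(r_0,\infty)$ by $\mu(E\setminus F^{r_0})<\infty$, so the set $\{r>0: \mu(\partial F^r)>0\}$ is at most countable. Hence one can pick a sequence $r_i\downarrow 0$ avoiding this set. On each space $E\setminus F^{r_i}$, which is open so that $F^{r_i}$ is closed, the restrictions $\mu_n^{r_i}$ and $\mu^{r_i}$ are finite, and the sets that are Borel in $E\setminus F^{r_i}$ are, up to the addition of points of $F^{r_i}$, Borel subsets of $E$ that are bounded away from $F$. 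Applying (ii) to such sets, together with $\mu(\partial F^{r_i})=0$ (so that no mass escapes at the boundary), gives the upper/lower semicontinuity inequalities required for the classical Portmanteau characterization of weak convergence in $\bbM_b(E\setminus F^{r_i})$, yielding $\mu_n^{r_i}\to\mu^{r_i}$ weakly.

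The main technical obstacle is this last step: one must check that the boundary-in-subspace of a set $A\subset E\setminus F^{r_i}$ can be controlled by the boundary in $E$ together with $\partial F^{r_i}$, and that the choice $\mu(\partial F^{r_i})=0$ suffices to push (ii) into a bona fide Portmanteau statement in the subspace. Everything else reduces to classical real-analysis approximation arguments on a complete separable metric space.
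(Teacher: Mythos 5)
The paper does not prove this proposition at all: it is quoted verbatim as Theorem~2.2 of Lindskog--Resnick--Roy (2014), so there is no in-paper argument to compare yours against. Your cycle (iii)~$\Rightarrow$~(i)~$\Rightarrow$~(ii)~$\Rightarrow$~(iii) is the standard route to this Portmanteau-type statement, and the two delicate points you single out are the right ones: the countability of $\{r>0:\mu(\partial F^{r})>0\}$ via disjointness of the level sets $\{d(\cdot,F)=r\}$ inside a region of finite mass, and the comparison $\partial_{E}A\subset \partial_{E\setminus F^{r_i}}A\cup\partial F^{r_i}$ (valid because $E\setminus F^{r_i}$ is closed in $E$) that transfers condition~(ii) into a continuity-set Portmanteau statement in the subspace, with total-mass convergence obtained from $A=E\setminus F^{r_i}$ itself. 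Steps (iii)~$\Rightarrow$~(i) and the upper bound in (i)~$\Rightarrow$~(ii) are correct as written.

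The one concrete defect is your lower-bound test function. As written, $g_\varepsilon(x)=(1-\varepsilon^{-1}d_E(x,A^c))_+\,\mathds{1}_{\mathrm{int}A}(x)$ is not continuous (the indicator creates a jump where $d(x,A^c)=0$ meets $\mathrm{int}A$'s closure), and worse, on $\mathrm{int}A$ one has $d(x,A^c)>0$, so $g_\varepsilon(x)\to 0$ pointwise as $\varepsilon\downarrow 0$; the resulting bound is $\liminf_n\mu_n(A)\geq 0$, which is vacuous. You want the opposite ramp, $g_\varepsilon(x)=\min\bigl(1,\varepsilon^{-1}d_E(x,A^c)\bigr)$, which is Lipschitz, vanishes off $\mathrm{int}A$ (hence has support in $\mathrm{cl}A$, still bounded away from $F$), satisfies $g_\varepsilon\leq\mathds{1}_A$, and increases to $\mathds{1}_{\mathrm{int}A}$ as $\varepsilon\downarrow 0$, so that monotone convergence gives $\liminf_n\mu_n(A)\geq\int g_\varepsilon\,\rmd\mu\uparrow\mu(\mathrm{int}A)$. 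With that substitution the proof is complete.
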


This notion of convergence is metrized by the distance
\begin{align}\label{distance_rho}
\rho(\mu_1,\mu_2)=\int_0^\infty \left\{\rho_r\left(\mu_1^{r},\mu_2^{r}\right)\wedge 1\right\}e^{-r}\rmd r
\end{align}
where $\mu^{r}$ denotes the restriction of $\mu$ to  $E\setminus F^r$ and $\rho_r$ the Prohorov metric on $\mathbb{M}_b(E\setminus F^r)$. Furthermore, $\mathbb{M}(E\setminus F)$ endowed with the distance $\rho$ is a complete separable metric space \citep[Theorem 2.3]{LRR14}.

\subsection{Background on regular variation}\label{sec:BRV}

Regular variation intrinsically involves the notion of scaling and cones. A scaling on a complete and separable metric space is a multiplication by positive reals, that is a continuous mapping  $(0, \infty) \times E\to E$ satisfying
\begin{align*}
&1x=x,\\
&u_1 (u_2 x)=(u_1 u_2)x,\quad \mbox{for all $u_1,u_2>0$}.
\end{align*}
Equivalently, a scaling is a continuous group action of  $(0,\infty)$ on $E$.

A cone is a Borel set $F \subset E$ that is stable under the group action, that is $x \in F$ implies $ux\in F$ for all $u>0$. In the following, we assume that  $F$ is a closed cone such that
\[
d(x,F)<d(ux,F),\quad \mbox{for all } u>1,\, x\in E\setminus F, 
\]
where $d(x,F)=\inf\{d(x,y):y\in F\}$ denotes the distance to the cone $F$.

\begin{definition}[Regular variation]\label{def:RV}\ \\ 
\vspace*{-0.5cm}
\begin{itemize}
\item	A measure $\nu\in\bbM(E\setminus F)$ is said to be regularly varying   if there exists a positive sequence $a_n\to\infty$ and a non-null measure $\mu \in \bbM(E\setminus F)$ such that 
	$$n\nu(a_n \cdot) \longrightarrow\mu (\cdot)\quad \mbox{in }\bbM(E\setminus F).$$ 
	When such a convergence holds, we write $\nu\in \mathrm{RV}(E\setminus F, \{a_n\}, \mu)$.
	\item An $E$-valued random element $X$ defined on a probability space $(\Omega, \mathcal{A},\bbP)$ is said to be regularly varying on $E\setminus F$ if there exists a positive sequence $a_n\to\infty$ and a non-null measure $\mu \in \bbM(E\setminus F)$ such that
	$$n \bbP(a_n^{-1} X \in \cdot )  \longrightarrow\mu (\cdot) \quad \mbox{in }\bbM(E\setminus F).$$  
	When such a convergence holds, we write $X \in \mathrm{RV}(E\setminus F, \{a_n\}, \mu)$.
	\end{itemize}
\end{definition}
Here, by abuse of notation, $n\bbP(a_n^{-1}X\in\cdot)$ is seen  as the restriction to $E\setminus F$ of the rescaled distribution of $X$; similarly
we say that a measure $\nu\in\bbM_b(E)$ is regularly varying on $E\setminus F$ if its restriction to $E\setminus F$ is regularly varying.

There are many equivalent formulations of regular variation, another important one being the convergence of 
\[
b_n\nu(n\cdot)\longrightarrow\mu (\cdot) \quad \mbox{or}\quad b_n\bbP(n^{-1}X\in\cdot)\longrightarrow\mu (\cdot) \quad \mbox{in $\bbM(E\setminus F)$}
\]
for some other positive sequence $b_n$, related to $a_n$ by $a_{[b_n]}\sim b_{[a_n]}\sim n$.  Also the convergence along integers $n\to\infty$ can be reinforced into convergence along a real variable $x\to\infty$. We refer to \cite{LRR14}, Definition 3.2 and Theorem 3.1 for exhaustive statements. 

An important consequence of regular variation is the existence of a regular variation index $\alpha>0$ such that $\mu$ is homogeneous of order $-\alpha<0$, that is 
$$\mu(u\,\cdot)=u^{-\alpha}\mu(\cdot)\quad \mbox{for all $u>0$}.$$
Then,  $(a_n)$ and $(b_n)$ are regularly varying sequences at infinity with index  $1/\alpha$ and $\alpha$ respectively, that is 
$$\lim_{n\to\infty} \frac{a_{[nu]}}{a_n}= u^{1/\alpha}  \quad \mbox{and}\quad \lim_{n\to\infty} \frac{b_{[nu]}}{b_n}= u^{\alpha},\quad \mbox{for all $u>0$}.
$$

\medskip
Importantly, regular variation not only gives a rate of convergence for rare events probabilities of the type $\bbP(X\in xA)$ for large $x>0$ but also provides the typical behavior of $X$ given the rare event $X\in xA$. This is formulated in terms of a conditional limit theorem, as in the following proposition.
\begin{proposition}\label{prop:cond_limit_thm}
Let $X\in\mathrm{RV}(E\setminus F,\{a_n\},\mu)$ and $A\in\mathcal{B}(E)$ be bounded away from $F$  such that $\mu(A)>0$ and $\mu(\partial A)=0$. Then, as $x\to\infty$,  
\[
\mathbb{P}(x^{-1}X\in \cdot \mid X\in xA)\longrightarrow \mu_A (\cdot) :=\frac{\mu(A\cap\cdot)}{\mu(A)} \quad \mbox{in $\bbM_b(E)$}.
\]
\end{proposition}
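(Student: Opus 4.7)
The plan is to reduce the claim to the portmanteau characterization of $\mathbb{M}(E\setminus F)$-convergence given in Proposition~\ref{ConvMEF}(ii), applied simultaneously to the conditioning set $A$ and to sets of the form $A\cap B$.

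First, I will invoke the extension of regular variation from the integer scaling $a_n$ to a continuous scaling $x\to\infty$, as noted after Definition~\ref{def:RV}: there exists a function $b(x)\to\infty$ such that
\[
b(x)\,\bbP(x^{-1}X\in\cdot)\stackrel{\bbM(E\setminus F)}{\longrightarrow} \mu(\cdot)\quad\text{as } x\to\infty.
\]
This allows me to formulate the conditioning with a continuous threshold, rather than an integer one.

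Second, fix any $B\in\cB(E)$ satisfying $\mu_A(\partial B)=0$, i.e.\ $\mu(A\cap\partial B)=0$. I claim that $A\cap B$ is a continuity set of $\mu$ bounded away from $F$. Boundedness away from $F$ is inherited from $A$. For the continuity property, a point $x\in\partial(A\cap B)\subset \mathrm{cl}(A)\cap\mathrm{cl}(B)$ necessarily lies in $\partial A$ or in $\partial B$, giving the inclusion
\[
\partial(A\cap B)\subset \bigl(\partial A\cap\mathrm{cl}(B)\bigr)\cup\bigl(\partial B\cap\mathrm{cl}(A)\bigr).
\]
Taking $\mu$-measure and using $\mu(\partial A)=0$, $\mu(A\cap\partial B)=0$, and the bound $\mu(\partial B\cap(\mathrm{cl}(A)\setminus A))\le\mu(\partial A)=0$ yields $\mu(\partial(A\cap B))=0$. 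Applying Proposition~\ref{ConvMEF}(ii) to both $A$ and $A\cap B$ and taking the ratio, I then obtain
\[
\bbP(x^{-1}X\in B\mid X\in xA)=\frac{\bbP(x^{-1}X\in A\cap B)}{\bbP(x^{-1}X\in A)}\longrightarrow \frac{\mu(A\cap B)}{\mu(A)}=\mu_A(B).
\]

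Finally, since the convergence holds for every continuity set $B$ of the finite measure $\mu_A$, the classical portmanteau theorem on a complete separable metric space yields weak convergence of the conditional laws to $\mu_A$ in $\bbM_b(E)$. Both sides being probability measures, no additional tightness is required, and the conclusion follows.

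The only real obstacle is the set-theoretic bookkeeping showing that the hypothesis $\mu(\partial A)=0$ combined with the weaker condition $\mu_A(\partial B)=0$ (rather than $\mu(\partial B)=0$) still yields $\mu(\partial(A\cap B))=0$, so that Proposition~\ref{ConvMEF}(ii) applies to $A\cap B$. Once this elementary step is handled, the proof is a routine application of the extended regular variation and the portmanteau theorem.
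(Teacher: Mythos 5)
Your proof is correct. Note first that the paper states Proposition~\ref{prop:cond_limit_thm} without giving a proof anywhere in Sections~5--7, so there is no in-text argument to compare against; what you have written is the standard argument one would supply, and each step checks out. The passage to a continuous normalization $b(x)\bbP(x^{-1}X\in\cdot)\to\mu$ is indeed licensed by the remark following Definition~\ref{def:RV} (via Theorem~3.1 of the cited reference). The set-theoretic step is sound: from $\partial(A\cap B)\subset(\partial A\cap\mathrm{cl}B)\cup(\partial B\cap\mathrm{cl}A)$ together with $\mathrm{cl}A\setminus A\subset\partial A$ one gets $\mu(\partial(A\cap B))\le \mu(\partial A)+\mu(A\cap\partial B)=0$, and $A\cap B\subset A$ remains bounded away from $F$ (as does its closure, since $F^r$ is open), so Proposition~\ref{ConvMEF}(ii) applies and the sandwich collapses to $b(x)\bbP(x^{-1}X\in A\cap B)\to\mu(A\cap B)$ and $b(x)\bbP(x^{-1}X\in A)\to\mu(A)>0$; the latter also guarantees that the conditioning event has positive probability for all large $x$, so the ratio is well defined. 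Finally, since the conditional laws and $\mu_A$ are all probability measures, convergence on every $\mu_A$-continuity set is equivalent to weak convergence by the classical portmanteau theorem, which is exactly convergence in $\bbM_b(E)$ as the paper defines it. No gaps.
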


The next proposition will be crucial in our proof of Theorems~\ref{theo:RV-D} and~\ref{theo:RV-D2}. It states  a regular variation criterion similar to the well known second converging together  theorem for weak convergence (see \cite{B68} Theorem 4.2  or \cite{Resnick2007} Theorem 3.5) and is especially useful when combined with truncation arguments.
\begin{proposition}\label{prop:HRV-criterion}
Let $E$ be a complete separable metric space and consider $E$-valued random variables $X$ and $X_{n,m}$, $n,m\geq 1$. Let $F\subset E$ be a closed cone. Assume that there is a positive sequence $(a_n)$ and $k\ge 1$ such that:
\begin{enumerate}[i)]
\item for each $m\geq 1$, $n^{k}\bbP(a_n^{-1}X_{n,m}\in\cdot )\longrightarrow \mu_m (\cdot)$ in $\bbM(E\setminus F)$ as $n\to\infty$;
\item $\mu_m\longrightarrow \mu$ in $\bbM(E\setminus F)$ as $m\to\infty$;
\item for all $\varepsilon>0$, $r>0$
\begin{align*}
\lim_{m\to\infty}\limsup_{n\to\infty} n^{k} \bbP(d(a_n^{-1}X_{n,m},a_n^{-1}X)>\varepsilon,d(a_n^{-1}X_{n,m},F)>r)&=0\\
\lim_{m\to\infty}\limsup_{n\to\infty} n^{k} \bbP(d(a_n^{-1}X_{n,m},a_n^{-1}X)>\varepsilon,d(a_n^{-1}X,F)>r)&=0.
\end{align*}
\end{enumerate}
Then, $n^{k}\bbP(a_n^{-1}X_{n}\in\cdot )\longrightarrow \mu (\cdot)$ in $\bbM(E\setminus F)$ as $n\to\infty$.
\end{proposition}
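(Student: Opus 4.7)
The plan is to verify characterization (ii) of Proposition \ref{ConvMEF}: for every Borel set $A$ bounded away from $F$, I must show the Portmanteau-type sandwich
\[
\mu(\mathrm{int}\, A) \;\leq\; \liminf_{n\to\infty} n^k\bbP(a_n^{-1}X_n\in A) \;\leq\; \limsup_{n\to\infty} n^k\bbP(a_n^{-1}X_n\in A) \;\leq\; \mu(\mathrm{cl}\, A).
\]
Fix such $A$ with $A\subset E\setminus F^{r_0}$ for some $r_0>0$ and pick $\varepsilon\in(0,r_0/2)$. The idea is to sandwich the event $\{a_n^{-1}X_n\in A\}$ between events involving the approximating variable $X_{n,m}$, control the discrepancy by (iii), and take the three limits in the order $n\to\infty$, $m\to\infty$, $\varepsilon\downarrow 0$.

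For the upper bound I would start from the inclusion
\[
\{a_n^{-1}X_n\in A\}\subset \{a_n^{-1}X_{n,m}\in A^\varepsilon\}\cup\{d(a_n^{-1}X_n,a_n^{-1}X_{n,m})>\varepsilon,\; a_n^{-1}X_n\in A\}.
\]
Since $\mathrm{cl}(A^\varepsilon)\subset E\setminus F^{r_0-\varepsilon}$ is bounded away from $F$, hypothesis (i) combined with the closed-set half of Proposition \ref{ConvMEF}(ii) gives $\limsup_n n^k\bbP(a_n^{-1}X_{n,m}\in A^\varepsilon)\leq \mu_m(\mathrm{cl}\, A^\varepsilon)$. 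The second event sits inside $\{d(a_n^{-1}X_n,a_n^{-1}X_{n,m})>\varepsilon,\; d(a_n^{-1}X_n,F)>r_0\}$, whose $\limsup_n$ vanishes as $m\to\infty$ by the second part of (iii). Letting $n\to\infty$ and then $m\to\infty$, using hypothesis (ii) on the closed set $\mathrm{cl}(A^\varepsilon)$, yields $\limsup_n n^k\bbP(a_n^{-1}X_n\in A)\leq\mu(\mathrm{cl}\, A^\varepsilon)$. As $\varepsilon\downarrow 0$, $\mathrm{cl}(A^\varepsilon)\downarrow\mathrm{cl}(A)$, and continuity from above of $\mu$ --- valid because $\mu$ is finite on sets bounded away from $F$ --- delivers the bound $\mu(\mathrm{cl}\, A)$.

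For the lower bound I would run the symmetric argument using the open inner $\varepsilon$-core $B_\varepsilon=\{x\in\mathrm{int}\,A:d(x,(\mathrm{int}\,A)^c)>\varepsilon\}$, which is open, contained in $E\setminus F^{r_0}$, and increases to $\mathrm{int}\,A$ as $\varepsilon\downarrow 0$. The inclusion $\{a_n^{-1}X_{n,m}\in B_\varepsilon\}\cap\{d(a_n^{-1}X_n,a_n^{-1}X_{n,m})\leq\varepsilon\}\subset\{a_n^{-1}X_n\in\mathrm{int}\,A\}$ produces
\[
n^k\bbP(a_n^{-1}X_n\in\mathrm{int}\,A)\;\geq\; n^k\bbP(a_n^{-1}X_{n,m}\in B_\varepsilon)- n^k\bbP(d(a_n^{-1}X_n,a_n^{-1}X_{n,m})>\varepsilon,\; a_n^{-1}X_{n,m}\in B_\varepsilon),
\]
and membership in $B_\varepsilon$ forces $d(a_n^{-1}X_{n,m},F)\geq r_0$, so the error is killed by the \emph{first} part of (iii). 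Applying the open-set half of Portmanteau at the $n$-level via (i) and then at the $m$-level via (ii), and using continuity from below of $\mu$ as $\varepsilon\downarrow 0$, yields $\liminf_n n^k\bbP(a_n^{-1}X_n\in\mathrm{int}\,A)\geq\mu(\mathrm{int}\,A)$. The main obstacle is bookkeeping rather than depth: one has to keep the auxiliary sets $A^\varepsilon$, $B_\varepsilon$, and the error sets uniformly bounded away from $F$ so that (i) and (iii) apply, select the appropriate form of (iii) (the one involving $X_n$ far from $F$ for the upper bound, and the one involving $X_{n,m}$ far from $F$ for the lower bound), and respect the order $n\to\infty$, $m\to\infty$, $\varepsilon\downarrow 0$ so that the additive approximation error vanishes before the auxiliary sets are reopened back to $A$.
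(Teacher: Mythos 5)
Your proof is correct, and the upper bound is essentially the paper's argument: the same decomposition of $\{a_n^{-1}X\in A\}$ into $\{a_n^{-1}X_{n,m}\in A^\varepsilon\}$ plus an error event controlled by the second half of (iii), with the limits taken in the order $n\to\infty$, $m\to\infty$, $\varepsilon\downarrow 0$ and continuity from above of $\mu$ at the end. Where you genuinely diverge is the lower bound. The paper restricts attention to $\mu$-continuity sets $B\subset E\setminus F^r$ (with $E\setminus F^r$ itself a continuity set, available for all but countably many $r$) and obtains the lower bound by complementation: it writes $\bbP(a_n^{-1}X\in B)=\bbP(a_n^{-1}X\in E\setminus F^r)-\bbP(a_n^{-1}X\in (E\setminus F^r)\cap B^c)$, bounds the subtracted term by the already-established limsup inequality, and proves $\liminf_n n^{k}\bbP(a_n^{-1}X\in E\setminus F^r)\ge\mu(E\setminus F^r)$ by a separate sandwich involving $E\setminus F^{r+\varepsilon}$. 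You instead run a direct symmetric argument with the open inner core $B_\varepsilon$, which lets you establish the full Portmanteau inequality $\mu(\mathrm{int}\,A)\le\liminf$ for arbitrary Borel $A$ bounded away from $F$, rather than exact convergence on continuity sets only; both characterizations are equivalent for $\bbM(E\setminus F)$-convergence by Proposition~\ref{ConvMEF}. Your route is arguably cleaner (no need to select $r$ so that $E\setminus F^r$ is a continuity set, and the roles of the two halves of (iii) are displayed symmetrically), while the paper's complementation trick spares it from introducing the inner core. One point worth making explicit in a final write-up: continuity from above along $\mathrm{cl}(A^\varepsilon)\downarrow\mathrm{cl}(A)$ is legitimate precisely because $\mu$ assigns finite mass to $\mathrm{cl}(A^{\varepsilon_0})$ for some small $\varepsilon_0$, this set being bounded away from $F$.
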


Remark that Proposition \ref{prop:HRV-criterion} holds replacing the normalizing sequence $n^k$ by any increasing function of $n$ say $(l_n)$. The proof of Proposition \ref{prop:HRV-criterion} in Section 7.1 generalises effortless to this more general case.

We finally provide  some intuition on successive hidden regular variation. Most often, regular variation is used when $F=F_0=\{0\}$ is reduced to a single point $0$, called the origin of $E$ and satisfying $u0=0$ for all $u>0$. Then, for an $E$-valued random variable $X$, $X/n$ converges in distribution to $0$ as $n\to\infty$. This limit theorem is quite uninformative since it simply states that $\bbP(X/n\in A)\to 0$ for all Borel sets $A$ bounded away from $0$. It is hence sensible to rescale these  probabilities and consider the convergence  $b_n\bbP(X/n\in \cdot)\to\mu_0(\cdot)$ in  $\bbM(E\setminus F_0)$. This implies roughly $\bbP(X/n\in  A)\sim \mu_0(A)/b_n$ and is much more informative, provided $\mu_0(A)>0$. When $\mu_0(A)=0$,  it is natural to look for a higher order scaling $b_n^{(1)}$ such that $b_n^{(1)}/b_n\to\infty$. The support of the homogeneous measure $\mu_0$ is a closed cone $F_1$ such that $\mu_0(A)=0$ for all $A$ bounded away from $F_1$. This leads us to consider the convergence  $b_n^{(1)}\bbP(X/n\in \cdot)\to \mu_1(\cdot)$ in  $\bbM(E\setminus F_1)$. This procedure can be repeated and we may obtain successive hidden regular variation of the form
\begin{equation}\label{eq:succ-RV}
b_n^{(k)}\bbP(n^{-1}X\in \cdot)\to \mu_k(\cdot) \ \ \mbox{in  $\bbM(E\setminus F_k)$},\quad k\geq 0,
\end{equation}
where $(F_{k})_{k\geq 1}$ are increasing cones, $\mu_k\in \bbM(E\setminus F_k)$ are measures with disjoint supports and  $b_n^{(k)}>0$ are rate functions such that $\lim_{n\to\infty} b_n^{(k)}/b_n^{(k-1)}= \infty$.

These successive regular variation results can also be formulated in terms of large deviations as in \cite{RBZ19}, Theorem 3.2. For a Borel set $A\subset E\setminus\{0\}$, define 
\[
\mathcal{K}(A)=\max\{k\geq 0: A\cap F_k=\emptyset \}
\quad\mbox{and}\quad
\mathcal{I}(A)=\mu_{\mathcal{K}(A)}( A).
\]
The successive regular variation from Equation~\eqref{eq:succ-RV} is equivalent to the large deviations 
\[
\mathcal{I}(\mathrm{int} A)\leq \liminf_{n\to\infty} b_n^{(\mathcal{K}(A))}\mathbb{P}(n^{-1}X\in A)\leq \limsup_{n\to\infty} b_n^{(\mathcal{K}(A))}\mathbb{P}(n^{-1}X\in A)\leq  \mathcal{I}(\mathrm{cl} A),
\]
for all Borel set $A\subset E\setminus\{0\}$ such that $\mathcal{K}(A)$ is  finite with  $A$ bounded away from $F_{\mathcal{K}(A)}$. In the following, we use mostly the terminology of regular variation instead of large deviations.

\subsection{Background on point processes}
 We refer to  \cite{DVJ03, DVJ08} and \cite{Snyder91} for a complete review of point process theory. 
 
Given a complete separable metric space $E$ and  a closed subset $F\subset E$, we have seen that the measure space $\bbM(E\setminus F)$ endowed with the metric $\rho$ is a complete and separable metric space. The subset $\cN(E\setminus F)$ of $\mathbb{N}$-valued measures is the subset of  point measures of the form $\pi=\sum_{i\in I} \varepsilon_{x_i}$, where the $x_i$'s are in $E\setminus F$ and $I$ is a countable index set. The condition that $\pi$ is finite on subsets that are bounded away from $F$ implies that the family $(x_i)_{i\in I}$ is at most countable and any accumulation point must belong to $F$. The mapping $x\mapsto \varepsilon_x$ defines an isometric embedding  $E\setminus F \to\cN(E\setminus F)$.

\cite{Kallenberg} shows that $\cN(E\setminus F)$ is a closed subset of $\bbM(E\setminus F)$ and hence a complete separable metric space.  A point process in $E\setminus F$ is a random variable with values in $\cN(E\setminus F)$, that is a measurable application from some probability space $(\Omega,\mathcal{F},\bbP)$ into $\cN(E\setminus F)$ endowed with its $\sigma$-algebra. When $E$ is equipped with a scaling function, a natural scaling induced on $\cN(E\setminus F)$ is 
\[
u\pi=\sum_{i\in I} \varepsilon_{ux_i},\quad u>0,\ \pi=\sum_{i\in I} \varepsilon_{x_i}\in \cN(E\setminus F).
\]
 That is, the multiplication acts on each point of the point measure.
Using this structure, one can define regularly varying point processes. It has been shown in Theorem 3.3 in \cite{DHS18} that if $\Pi$ is a Poisson point process on $E\setminus \{0\}$ with regularly varying intensity measure $\nu\in\mathrm{RV}(E\setminus \{0\},\{a_n\},\mu)$, then $\Pi$ is regularly varying in $\cN(E\setminus\{0\})$ with sequence $(a_n)$ and limit measure $\mu^*$ defined as the image of $\mu$ under $x\mapsto \varepsilon_x$, that is
\[
\mu^*(B)=\int_{E}\mathds{1}_{\{\varepsilon_x\in B\}}\mu(\rmd x),\quad B\in\mathcal{B}(\cN(E\setminus\{0\})).
\]
In this paper, we extend this result in several ways: we obtain successive hidden regular variation not only for Poisson point processes with $F=\{0\}$, but for general independently marked point process with regularly varying mark distribution. 

Our results are based on  a criterion for regular variation in $\cN(E\setminus F)$ extending Theorem A.1 in \cite{DHS18}. For brevity, we note here $\cN=\cN(E\setminus F)$. For $k\geq 0$, we consider the closed cone $\cN_k\subset \cN$ of point measures with at most $k$ points that is $$\cN_k := \left\lbrace \pi= \sum_{i=1}^p \varepsilon_{ x_i} ; \ 0 \leq p \leq k;\  x_1,\ldots,x_p \in E \setminus F \right\rbrace.$$ When $k=0$, $\cN_0=\{0\}$ is reduced to the null measure. For $\mu^*$ a Borel measure on $\cN$, we denote by $\mathcal{B}_{\mu^*}$ the class of Borel sets $A\in\cB(E)$ that are bounded away from $F$ and such that $\mu^*(\pi(\partial A)>0)=0$. The following theorem provides a criterion for convergence in $\bbM(\cN\setminus\cN_k)$ in terms of finite-dimensional distributions and Laplace functional. 

\begin{theorem}[Convergence in $\bbM(\cN\setminus\cN_k)$]\label{theo:cv-N-Nk}\ \\
Let $k\geq 0$ and $\mu^*,\mu_1^*,\mu_2^*,\ldots \in \bbM(\cN\setminus\cN_k)$.  The following statements are equivalent, where convergences are meant as $n\to\infty$:
	\begin{enumerate}[(i)]
		\item \label{item:convmzero} $\mu_n^*\longrightarrow \mu^*$ in $\bbM(\cN\setminus\cN_k)$;
		\item \label{item:convfidi} for all $p\geq 1$, $A_1,\ldots,A_p\in \mathcal{B}_{\mu^*}$ and $(m_1,\ldots,m_p)\in \mathbb{N}^p$ such that $\sum_{i=1}^p m_i\geq k+1$,
		\[
		\mu_n^*\left(\pi(A_i)=m_i,\ 1\leq i\leq p\right) \to \mu^*\left(\pi(A_i)=m_i,\ 1\leq i\leq p\right);
		\]
		\item  \label{item:convlaplace} there exists a decreasing sequence $r_i\downarrow 0$ such that
		\begin{align*}
		\int_{\mathcal{N}}\rme^{-\pi(f)}\mathds{1}_{\{\pi(E\setminus F^{r_i})\geq k+1\}}\mu_n^*(\rmd \pi) 
		&\longrightarrow     \int_{\mathcal{N}} \rme^{-\pi(f)}\mathds{1}_{\{\pi(E\setminus F^{r_i})\geq k+1\}}\mu^*(\rmd \pi),
		\end{align*}
for all bounded Lipschitz functions $f:E\to [0,\infty)$ vanishing on $F^{r_i}$  and where $\pi(f)=\int_E f(x)\pi(\rmd x)$.
\end{enumerate}
\end{theorem}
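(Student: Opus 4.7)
The plan is to apply Proposition~\ref{ConvMEF} with $(E,F)$ replaced by $(\cN, \cN_k)$, thereby reducing convergence in $\bbM(\cN \setminus \cN_k)$ to weak convergence of finite measures on the spaces $\cN \setminus \cN_k^r$, and then to invoke the classical Kallenberg theory characterising weak convergence of such measures via finite-dimensional distributions and Laplace functionals. The keystone is a geometric sandwich: setting $s(t) = -\log(1-t)$ (so that $s(t)\downarrow 0$ as $t\downarrow 0$), one has for every $0<r<r'$
\[
\{\pi \in \cN : \pi(E \setminus F^{s(r')}) \geq k+1\} \,\subset\, \cN \setminus \cN_k^r \,\subset\, \{\pi \in \cN : \pi(E \setminus F^{s(r)}) \geq k+1\}.
\]
The right inclusion holds because if $\pi(E\setminus F^{s(r)}) \leq k$ then $\eta := \pi|_{E \setminus F^{s(r)}} \in \cN_k$, and the $\wedge 1$ cap in the definition \eqref{distance_rho} of $\rho$ gives $\rho(\pi,\eta) \leq \int_0^{s(r)} \rme^{-t} \rmd t = 1-\rme^{-s(r)} = r$. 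The left inclusion holds because if $\pi(E\setminus F^{s(r')}) \geq k+1$ then for any $\eta \in \cN_k$ and any $t<s(r')$ the Prohorov distance $\rho_t(\pi^t,\eta^t)$ equals $1$ (test with $A = E\setminus F^{s(r')}$: since $\eta$ has total mass at most $k$, the inequality $\pi^t(A) \leq \eta^t(A^\varepsilon) + \varepsilon$ forces $\varepsilon \geq 1$), whence $\rho(\pi,\eta)\geq 1-\rme^{-s(r')} = r'$.

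For (i)$\Rightarrow$(ii), fix $A_1,\dots,A_p \in \mathcal{B}_{\mu^*}$ and $(m_i)\in\mathbb{N}^p$ with $\sum_i m_i \geq k+1$. The set $B = \{\pi(A_i)=m_i,\,1\leq i\leq p\}$ is contained in $\{\pi(\bigcup_i A_i)\geq k+1\}$; since $\bigcup_i A_i$ is bounded away from $F$, the sandwich shows $B$ is bounded away from $\cN_k$. Moreover $\partial B \subset \bigcup_i \{\pi : \pi(\partial A_i)>0\}$ is $\mu^*$-null by the definition of $\mathcal{B}_{\mu^*}$, so Proposition~\ref{ConvMEF}(ii) yields (ii). For (ii)$\Rightarrow$(i), the sandwich reduces the matter to weak convergence of the finite restrictions $\mu_n^{*,r}$ to $\cN\setminus\cN_k^r$ (as finite Borel measures on a Polish space); and the events appearing in (ii), those with $\sum m_i\geq k+1$, form a convergence-determining class on this truncated space (classical point process theory, cf.\ \cite{Kallenberg}), whence via Proposition~\ref{ConvMEF}(iii) one obtains (i).

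For (i)$\Leftrightarrow$(iii), Proposition~\ref{ConvMEF}(iii) combined with the sandwich shows (i) is equivalent to weak convergence, along some $r_i\downarrow 0$, of the restrictions of $\mu_n^*$ to $\{\pi: \pi(E\setminus F^{r_i}) \geq k+1\}$ as finite Borel measures on $\cN$; one chooses $r_i$ to avoid the (at most countable) radii failing $\mu^*(\pi(\partial F^{r_i})>0)=0$, ensuring that these restrictions are $\mu^*$-continuity sets. After push-forward by the continuous restriction map $\pi\mapsto \pi|_{E\setminus F^{r_i}}$, these become finite measures on the Polish space of finite point configurations in $E\setminus F^{r_i}$ of cardinality at least $k+1$. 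The classical Laplace functional criterion then characterises this weak convergence exactly as the pointwise convergence
\[
\int_{\mathcal{N}} \rme^{-\pi(f)}\mathds{1}_{\{\pi(E\setminus F^{r_i})\geq k+1\}}\mu_n^*(\rmd\pi) \longrightarrow \int_{\mathcal{N}} \rme^{-\pi(f)}\mathds{1}_{\{\pi(E\setminus F^{r_i})\geq k+1\}}\mu^*(\rmd\pi),
\]
for $f$ bounded Lipschitz and vanishing on $F^{r_i}$, which is precisely (iii).

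The main obstacle is the sandwich itself: although the computation is straightforward once one sees how the $\wedge 1$ cap in $\rho$ absorbs the contribution of the (possibly infinitely many) points of $\pi$ accumulating at $F$, executing it cleanly requires handling simultaneously the integral form of $\rho$, the uniformity over all $\eta \in \cN_k$, and the mismatch between the two radii $r$ and $r'$ in the inclusions. Once the sandwich is in hand, the three equivalences are essentially a careful matching of Proposition~\ref{ConvMEF}'s three characterisations with the corresponding Kallenberg criteria for weak convergence of finite measures on point process spaces.
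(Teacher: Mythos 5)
Your proposal is correct and follows essentially the same route as the paper: the paper's key ingredient is Lemma~\ref{lem:dist_Nk}, which bounds $\rho(\pi,\cN_k)$ above and below by the $(k+1)$-th largest point norm $\|\pi\|_{k+1}$ via exactly the two computations you perform (the $\wedge 1$ cap absorbing the small points for the upper bound, and the Prohorov distance being forced to exceed $1$ by a point-count mismatch for the lower bound), yielding the same sandwich of $\cN\setminus\cN_k^r$ between sets of the form $\{\pi(E\setminus F^{r'})\geq k+1\}$. The remaining reduction to weak convergence of restrictions via Proposition~\ref{ConvMEF}(iii) and the appeal to the classical finite-dimensional/Laplace-functional characterisation (the paper cites Zhao and the argument of Dombry--Hashorva--Soulier) match your argument step for step.
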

As a consequence of a standard approximation argument,  when $(iii)$ holds for  bounded Lipschitz test functions $f:E\to [0,\infty)$ vanishing on $F^{r_i}$, it holds also for all $f\in\mathcal{C}_b(E)$ vanishing on $F^{r_i}$. Furthermore, when $k=0$, we retrieve exactly Theorem A.1 in \cite{DHS18} since $(iii)$ is then equivalent to 
\begin{align}\label{Laplacek0}
\int_{\mathcal{N}}\left(1-\rme^{-\pi(f)}\right)\mu_n^*(\rmd \pi) 
		\longrightarrow     \int_{\mathcal{N}}\left(1-\rme^{-\pi(f)}\right)\mu^*(\rmd \pi),
\end{align}
because the contribution of the event $\{\pi(E\setminus F^{r_i})=0\}$ in the integral vanishes since $f$ is supported by $E\setminus F^{r_i}$.

\section{Regular variation for marked point processes}\label{sec:RVPP}
The simplest example from risk theory we want to consider is the Poisson point process $\Pi$ on $E=[0,T]\times [0,\infty)$ with fixed $T>0$ and intensity $\lambda(\rmd t)\nu(\rmd x)$, where $\lambda$ is a finite measure on $[0,T]$ and $\nu$ a probability measure on $(0,\infty)$. Then $\Pi$ has finitely many points almost surely and can be represented as  
\[
\Pi=\sum_{i=1}^N \varepsilon_{(T_i,X_i)}.
\]
Each point $(T_i,X_i)$ represents a claim occuring at time $T_i$ with size $X_i>0$. The random variable $N$ defined as  $N:= \# \{ i \geq 1 : T_i \leq T \} $  denotes the total number of claims up to time $T$ which is here Poisson distributed with mean $\lambda([0,T])$. The arrival times $T_1\leq \cdots \leq T_N$ form  a Poisson point process with intensity $\lambda$ on $[0,T]$. The claim sizes $X_1,\ldots,X_N$ distributed as $\nu$ are independent of the claim number and arrival times. We consider regular variation of the Poisson point process $\Pi$ when the claim size distribution $\nu$ is regularly varying.

\medskip
For the purpose of generality, we consider the more abstract and general framework where $E=\cT\times \cX$ is the cartesian product of two complete and separable metric spaces $(\cT,d_\cT)$ and $(\cX,d_\cX)$. We think of $\cT$ as the time component and $\cX$ as the space component. We equip $E$ with the distance 
\[
d_E(z,z')=d_\cT(t,t')+d_\cX(x,x')
\]
for $z=(t,x),z'=(t',x')\in E=\cT\times\cX.$ The induced scaling on $E$ is defined by $u\cdot(t,x)=(t,ux)$ for $u>0$, $t\in \cT$, $x\in\cX$.  That is the scaling operates on the space component only.  We also assume that $\cX$  possesses an origin noted $0_\cX$, or simply $0$ when no confusion is possible. The subset $F=\cT\times \{0\}\subset E$ is a cone representing the time axis.  Note that the $r$-neighborhood of $F$ is simply $F^r=\cT\times B_{0,r}^\cX$.
\newline

We develop a regular variation theory in $\cN=\cN(E\setminus F)$ for independently marked point processes of the form
\begin{align}\label{MPP}
\Pi=\sum_{i=1}^N \varepsilon_{(T_i,X_i)}
\end{align}
where 
\begin{equation}\label{MPPb} 
\Psi=\sum_{i=1}^N \varepsilon_{T_i}
\end{equation} 
is a finite point process on $\cT$ representing the claim arrivals and, independently, $X_1,X_2,\ldots$ are i.i.d. random variables on $\cX\setminus\{0\}$ with regularly varying distribution $\nu$.  We  consider three particular situations, namely, marked Poisson point processes,  independently marked point processes, and triangular arrays of independently marked point processes.

\subsection{Regular variation for marked Poisson point processes}\label{subsec:MPP}

We first focus on the simple situation of a marked Poisson point process, that is the base point process $\Psi$ in Equation~\eqref{MPPb} is a Poisson point process on $\cT$ with finite intensity measure $\lambda\in\bbM_b(\cT)$. Then $\Pi$ is a Poisson point process with product intensity measure $\lambda \otimes \nu$. Recall the notation  $E=\cT\times \cX$, $F=\cT\times\{0\}$ and $\cN=\cN(E\setminus F)$ the space of point measures on $E\setminus F$. For $k\geq 0$, $\cN_k\subset\cN$ denotes the closed cone of point measures with at most $k$ points. 

\begin{theorem}[RV for marked Poisson point processes] \label{thm:RV-PP1}
Consider  $\Pi$ a Poisson point process on $E\setminus F$ with intensity $\lambda(\rmd t)\nu(\rmd x)$ where $\lambda\in \bbM_b(\cT)$ and  $\nu \in RV(\cX\setminus\{0\}, (a_n),\mu)$. Then, for $k\geq 0$,
 \begin{align}\label{thm:PiRV1}
 n^{k+1}\mathbb{P}(a_n^{-1}\Pi\in\cdot) \longrightarrow \mu^\ast_{k+1} (\cdot)\quad \mbox{in $\bbM(\mathcal{N}\setminus\cN_k)$},
 \end{align}
  where the limit measure $\mu^\ast_{k+1}$ is non-null and given by
\begin{align}\label{thm:mu_star1}
\mu^\ast_{k+1}(B)=\frac{1}{(k+1)!}\int_{E^{k+1}} \mathds{1}_{\{\sum_{i=1}^{k+1}\varepsilon_{(t_i,x_i)}\in B\}} \otimes_{i=1}^{k+1} \lambda(\rmd t_i)\mu(\rmd x_i),\quad  B\in \mathcal{B}(\mathcal{N}\setminus\cN_k).
\end{align}
\end{theorem}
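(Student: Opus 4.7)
I would base the proof on the Laplace-functional criterion $(iii)$ of Theorem~\ref{theo:cv-N-Nk}. Choose a decreasing sequence $r_i\downarrow 0$ such that $\mu(\partial B^\cX_{0,r_i})=0$ for each $i$ (possible since $\mu$ charges at most countably many spheres around $0$). It then suffices to prove that for each such $r=r_i$ and every bounded Lipschitz $f:E\to[0,\infty)$ vanishing on $F^{r}$,
\begin{equation*}
L_n := n^{k+1}\,\bbE\!\left[\rme^{-(a_n^{-1}\Pi)(f)}\mathds{1}_{\{(a_n^{-1}\Pi)(E\setminus F^{r})\geq k+1\}}\right]
\end{equation*}
converges to the same integral against $\mu^\ast_{k+1}$. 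The non-nullness of $\mu^\ast_{k+1}$ follows immediately from the non-nullness of $\lambda$ and $\mu$.

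The main step is an explicit Poissonian computation. The process $a_n^{-1}\Pi$ is itself Poisson with intensity $\lambda(\rmd t)\,\nu(a_n\,\rmd x)$, so introducing
\begin{align*}
\alpha_n &:= \int_{E}\mathds{1}_{\{a_n^{-1}x\notin B^\cX_{0,r}\}}\,\lambda(\rmd t)\nu(\rmd x),
\\
\beta_n &:= \int_{E}\rme^{-f(t,a_n^{-1}x)}\mathds{1}_{\{a_n^{-1}x\notin B^\cX_{0,r}\}}\,\lambda(\rmd t)\nu(\rmd x),
\end{align*}
the number $M_n$ of points of $a_n^{-1}\Pi$ falling in $E\setminus F^r$ is Poisson with parameter $\alpha_n$, and conditionally on $M_n=m$ these $m$ points are i.i.d. Since $f$ vanishes on $F^r$, only those points contribute to $(a_n^{-1}\Pi)(f)$, and a direct calculation gives
\begin{equation*}
\bbE\!\left[\rme^{-(a_n^{-1}\Pi)(f)}\mathds{1}_{\{M_n\geq k+1\}}\right]
= \rme^{-\alpha_n}\sum_{m=k+1}^{\infty}\frac{\beta_n^{m}}{m!}
= \rme^{-(\alpha_n-\beta_n)}-\rme^{-\alpha_n}\sum_{m=0}^{k}\frac{\beta_n^{m}}{m!}.
\end{equation*}

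Now I invoke the regular variation hypothesis. Since $n\nu(a_n\,\cdot)\to\mu$ in $\bbM(\cX\setminus\{0\})$, the continuity at $B^\cX_{0,r}$ yields $n\alpha_n\to\lambda(\cT)\mu(\cX\setminus B^\cX_{0,r})$; and because $(1-\rme^{-f(t,\cdot)})$ is bounded continuous with support bounded away from $0$,
\begin{equation*}
n\,(\alpha_n-\beta_n)\longrightarrow \int_{E}(1-\rme^{-f(t,y)})\,\lambda(\rmd t)\mu(\rmd y),
\end{equation*}
from which $n\beta_n\to \int_{E}\rme^{-f(t,y)}\mathds{1}_{\{y\notin B^\cX_{0,r}\}}\lambda(\rmd t)\mu(\rmd y)$. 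In particular $\alpha_n,\beta_n=O(1/n)$, so multiplying the explicit identity above by $n^{k+1}$ all terms of order $\beta_n^{m}$ with $m\geq k+2$ vanish and the leading contribution $n^{k+1}\beta_n^{k+1}/(k+1)!$ converges to $\frac{1}{(k+1)!}\big(\int_E\rme^{-f(t,x)}\mathds{1}_{\{x\notin B^\cX_{0,r}\}}\lambda(\rmd t)\mu(\rmd x)\big)^{k+1}$. By Fubini this quantity equals $\int_{\cN}\rme^{-\pi(f)}\mathds{1}_{\{\pi(E\setminus F^r)\geq k+1\}}\mu^\ast_{k+1}(\rmd\pi)$, as the indicator forces all $k+1$ atoms in $\mu^\ast_{k+1}$ to lie outside $F^r$, which is exactly the required limit.

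The step I expect to require the most care is the algebraic bookkeeping of the Poisson expansion: one must verify that the terms indexed by $m\leq k$ in $\rme^{-\alpha_n}\sum_{m=0}^{k}\beta_n^m/m!$ cancel the expansion of $\rme^{-(\alpha_n-\beta_n)}$ up to the order $n^{-(k+1)}$, so that the first surviving term is precisely of order $(k+1)$. This combinatorial identity is the quantitative translation of the $k+1$-large-point heuristic, and once it is written out the remainder of the argument reduces to standard applications of the regular-variation hypothesis and Fubini's theorem.
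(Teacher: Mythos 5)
Your proposal is correct, and it follows the same overall strategy as the paper (the Laplace-functional criterion $(iii)$ of Theorem~\ref{theo:cv-N-Nk}, applied at continuity radii $r$ for $\mu$, plus the independence structure of the Poisson process), but the execution differs in a way worth noting. The paper handles $k\geq 1$ in two stages: it first shows $n^{k+1}\bbP\big(\Pi(\cT\times a_nB_{0,r}^c)\geq k+1\big)\sim n^{k+1}\bbP\big(\Pi(\cT\times a_nB_{0,r}^c)= k+1\big)\to c^{k+1}/(k+1)!$, then conditions on exactly $k+1$ points in $\cT\times a_nB_{0,r}^c$, uses their conditional i.i.d.\ distribution, and invokes the $(k+1)$-fold tensor-product convergence of Lemma~\ref{lem:cv_product} to pass to the limit. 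You instead write the truncated Laplace functional in closed form as $\rme^{-\alpha_n}\sum_{m\geq k+1}\beta_n^m/m!$ and read off the leading term $n^{k+1}\beta_n^{k+1}/(k+1)!$ directly; the remainder is $O\big((n\beta_n)^{k+1}\beta_n\big)\to 0$, so no separate equivalence between $\{\geq k+1\}$ and $\{=k+1\}$ is needed. A further payoff of your factorized form is that the limit is simply $\big(\lim n\beta_n\big)^{k+1}/(k+1)!$, so you only need the first-order convergence $\lambda\otimes n\nu(a_n\cdot)\to\lambda\otimes\mu$ in $\bbM(E\setminus F)$ (the $k=1$ case of Lemma~\ref{lem:cv_product}) rather than its $(k+1)$-fold tensor power; the identification with $\int_{\cN}\rme^{-\pi(f)}\mathds{1}_{\{\pi(E\setminus F^r)\geq k+1\}}\mu^*_{k+1}(\rmd\pi)$ is then immediate because $\mu^*_{k+1}$ charges only measures with exactly $k+1$ atoms, forcing all of them outside $F^r$. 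Your final worry about cancellations is moot: working with the tail sum $\rme^{-\alpha_n}\sum_{m\geq k+1}\beta_n^m/m!$ rather than the complemented form makes the order of the leading term manifest. The one hypothesis you should state explicitly is $n(\alpha_n-\beta_n)\to\int_E(1-\rme^{-f})\,\rmd(\lambda\otimes\mu)$, which does require the Fubini/dominated-convergence argument of Lemma~\ref{lem:cv_product}, not just the marginal convergence $n\nu(a_n\cdot)\to\mu$; this is exactly the paper's Lemma~\ref{lem:cv_product} and is unproblematic.
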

Equivalently, Equation~\eqref{thm:PiRV1} can be rephrased in terms of regular variation as
\[
\Pi\in\mathrm{RV}(\mathcal{N}\setminus\cN_k, \{a_{[n^{1/(k+1)}]}\}, \mu^\ast_{k+1})
\]
or 
\begin{equation}\label{eq:succ-RV-bis}
b_n^{k+1}\mathbb{P}(n^{-1}\Pi\in\cdot) \longrightarrow \mu^\ast_{k+1}(\cdot) \quad \mbox{in $\bbM(\mathcal{N}\setminus\cN_k)$}
\end{equation}
where $b_n$ is such that $b_n\nu(n\cdot)\to \mu(\cdot)$ in $\bbM(\cX\setminus\{0\})$. If $\alpha$ denotes the regular variation index of $\nu$, then $\Pi$ is regularly varying in $\cN\setminus\cN_k$ with index  $(k+1)\alpha$. The limit measure $\mu^\ast_{k+1}$ is the image  of  $(\lambda\otimes \mu)^{ \otimes k+1}$ under the mapping 
$$(z_1,\ldots,z_{k+1})\in E^{k+1} \longmapsto \sum_{i=1}^{k+1} \varepsilon_{z_i}\in\cN$$ 
and  is concentrated on the  cone of point measures with exactly $k+1$ points. 

\medskip
Theorem~\ref{thm:RV-PP1} provides successive hidden regular variation as discussed in Section~\ref{sec:BRV} and can be interpreted as a single/multiple large point heuristic, see  \cite{RBZ19}. For $k=0$,  the regular variation writes $b_n\mathbb{P}(\Pi/n\in\cdot)\longrightarrow \mu^\ast_1(\cdot)$ in $\bbM(\cN\setminus\{0\})$. This is a single large jump heuristic  since the limit measure is supported by the cone $\cN_1$ of point measures with at most one point.  Removing this cone, we obtain  hidden regular variation with index $2\alpha$ in $\bbM(\cN\setminus\cN_1)$ and the convergence  $b_n^2\mathbb{P}(\Pi/n\in\cdot)\longrightarrow \mu^\ast_2(\cdot)$. This is a multiple large jump heuristic with two large points and $\mu^\ast_2$ is supported by the cone $\cN_2$. Removing the cone $\cN_2$, we obtain hidden regular variation with index  $3\alpha$ in $\bbM(\cN\setminus\cN_2)$ and so forth.

\subsection{Independently marked point processes}\label{subsec:general}
The results on regular variation for marked Poisson point processes are extended to general independently marked point processes as defined in Equation~\eqref{MPP}. For $k\geq 1$,  the $k$-th factorial moment measure of the base point process $\Psi=\sum_{i=1}^N \varepsilon_{T_i}$ is defined  by
\[
M_{k}(A)=\bbE\left[\Psi^{(k)}(A) \right],\quad A\in  \mathcal{B}(\cX^k) ,
\]
where
\[
\Psi^{(k)}=\sum_{1\leq i_1\neq\cdots\neq i_k\leq N}\varepsilon_{(T_{i_1},\ldots,T_{i_k})}
\]
is the $k$-th factorial power of $\Psi$. The $k$-th factorial moment measure $M_k$ is finite if and only if  $N$ has a finite moment of order $k$, see \cite{DVJ03} Chapter 5.2  for more details on these notions.

\begin{theorem}[RV for independently marked point processes] \label{thm:RV-PP2}
Consider the independently marked point process $\Pi$ defined by Equation~\eqref{MPP} with $\nu\in RV(\cX\setminus\{0\}, (a_n),\mu)$. Assume that, for $k\geq 0$, the base point process $\Psi$ has a finite and non-null $(k+1)$-th factorial moment measure $M_{k+1}$.  Then,
 \begin{align}\label{thm:PiRV2}
 n^{k+1}\mathbb{P}(a_n^{-1}\Pi\in\cdot) \longrightarrow \mu^\ast_{k+1}(\cdot) \quad \mbox{in $\bbM(\mathcal{N}\setminus\cN_k)$},
 \end{align}
with non-null limit measure defined, for $B\in \mathcal{B}(\mathcal{N}\setminus\cN_k)$, by
 \begin{align}\label{thm:mu_star2}
\mu_{k+1}^\ast(B)=\frac{1}{(k+1)!}\int_{E^{k+1}} \mathds{1}_{\left\{\sum_{i=1}^{k+1} \varepsilon_{(t_i,x_i)}\in B\right\}} M_{k+1}(\rmd t_1,\ldots,\rmd t_{k+1})\otimes_{i=1}^{k+1}\mu(\rmd x_i).
\end{align}
\end{theorem}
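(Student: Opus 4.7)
My plan is to combine the Laplace functional criterion of Theorem~\ref{theo:cv-N-Nk} with a truncation of the base process, and to pass to the limit through Proposition~\ref{prop:HRV-criterion}. Set $\Pi_m:=\Pi\,\mathds{1}_{\{N\le m\}}$, an independently marked point process whose base $\Psi_m:=\Psi\,\mathds{1}_{\{N\le m\}}$ has at most $m$ points and therefore finite factorial moments of every order. Its $(k+1)$-th factorial moment measure $M_{k+1,m}(\cdot)=\bbE[\Psi^{(k+1)}(\cdot)\mathds{1}_{\{N\le m\}}]$ increases to $M_{k+1}$ as $m\to\infty$, and I denote by $\mu^\ast_{k+1,m}$ the measure defined by~\eqref{thm:mu_star2} with $M_{k+1,m}$ in place of $M_{k+1}$.

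For each fixed $m$ I establish $n^{k+1}\bbP(a_n^{-1}\Pi_m\in\cdot)\to\mu^\ast_{k+1,m}$ in $\bbM(\cN\setminus\cN_k)$ via Theorem~\ref{theo:cv-N-Nk}(iii). Pick a bounded Lipschitz $\tilde f\ge 0$ vanishing on $F^r$ with $\mu(\partial B^{\cX}_{0,r})=0$, and set $q_n=\nu(\{|x|>ra_n\})$, $\phi_n(t)=\bbE[\rme^{-\tilde f(t,X/a_n)}\mathds{1}_{\{|X|>ra_n\}}]$. Conditioning on $\Psi_m$ and partitioning over the subset of indices carrying a large mark produces, since $\tilde f$ vanishes on $F^r$,
\[
\bbE\bigl[\rme^{-a_n^{-1}\Pi_m(\tilde f)}\mathds{1}_{\{a_n^{-1}\Pi_m(E\setminus F^r)\ge k+1\}}\mid\Psi_m\bigr]=\sum_{j\ge k+1}\frac{(1-q_n)^{N_m-j}}{j!}\,\Psi_m^{(j)}(\phi_n^{\otimes j}).
\]
Regular variation of $\nu$ yields $n\phi_n(t)\to\tilde\phi(t):=\int_{|y|>r}\rme^{-\tilde f(t,y)}\mu(\rmd y)$ pointwise, with uniform bound $n\phi_n\le nq_n=O(1)$. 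Multiplying by $n^{k+1}$, taking expectation, and applying dominated convergence (valid because $M_{k+1,m}(\cT^{k+1})<\infty$) converts the $j=k+1$ term into $\frac{1}{(k+1)!}\int\tilde\phi^{\otimes(k+1)}\,\rmd M_{k+1,m}$, which Fubini identifies with $\int\rme^{-\pi(\tilde f)}\mathds{1}_{\{\pi(E\setminus F^r)\ge k+1\}}\mu^\ast_{k+1,m}(\rmd\pi)$. The terms $j\ge k+2$ are dominated by $n^{k+1}q_n^{k+2}C_m=O(q_n)$ with $C_m$ depending only on $m$, using $N_m\le m$, so they vanish.

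Finally, I apply Proposition~\ref{prop:HRV-criterion} to $X=\Pi$, $X_{n,m}=\Pi_m$, with scaling $n^{k+1}$. Condition (i) has just been proved, and (ii)---namely $\mu^\ast_{k+1,m}\to\mu^\ast_{k+1}$ in $\bbM(\cN\setminus\cN_k)$---follows from Theorem~\ref{theo:cv-N-Nk}(iii) by monotone convergence since $M_{k+1,m}\uparrow M_{k+1}$. For (iii), on $\{N\le m\}$ we have $\Pi_m=\Pi$ and on $\{N>m\}$ we have $\Pi_m=0\in\cN_k$, so the first approximation probability vanishes identically. For the second, the geometric fact $\rho(\pi,\pi|_{E\setminus F^r})\le 1-\rme^{-r}$ (which follows from the explicit form of $\rho$ in~\eqref{distance_rho}) gives the inclusion $\{d(a_n^{-1}\Pi,\cN_k)>r_0\}\subset\{a_n^{-1}\Pi(E\setminus F^{r'})\ge k+1\}$ for $r'=-\ln(1-r_0)$, and a Markov bound conditional on $N$ then yields
\[
n^{k+1}\bbP\bigl(N>m,\,d(a_n^{-1}\Pi,\cN_k)>r_0\bigr)\le(nq_{n,r'})^{k+1}\bbE\!\left[\mathds{1}_{\{N>m\}}\tbinom{N}{k+1}\right]\xrightarrow[m\to\infty]{}0,
\]
by dominated convergence using $\bbE\tbinom{N}{k+1}<\infty$. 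The chief obstacle is this last estimate: because only $M_{k+1}$ is assumed finite, a direct Markov bound on the error of order $q_n^{k+2}$ is unavailable, and the truncation $\Pi_m$ is precisely the device that makes Proposition~\ref{prop:HRV-criterion} applicable.
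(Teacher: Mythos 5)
Your proposal is correct, but it takes a genuinely different route from the paper's. The paper verifies the Laplace-functional criterion of Theorem~\ref{theo:cv-N-Nk}(iii) for $\Pi$ itself: it conditions on $\{N=j\}$, shows for each fixed $j$ that $n^{k+1}\bbP(S_{j,n}\ge k+1)\sim n^{k+1}\bbP(S_{j,n}=k+1)\sim\binom{j}{k+1}\mu(B_{0,r}^c)^{k+1}$, sums over $j$ by dominated convergence using the binomial bound of Lemma~\ref{lem:binomial} together with $\bbE[N^{k+1}]<\infty$, and identifies the limit with $M_{k+1}$ through the Janossy measures $J_j=j!\,p_j\pi_j$ and the identity $M_{k+1}(B)=\sum_{m\ge0}J_{k+1+m}(B\times\cT^m)/m!$. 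You instead truncate the base process on $\{N\le m\}$, where the error terms ($j\ge k+2$ and the factor $(1-q_n)^{N_m-j}$) are trivially controlled because $N_m\le m$, and then remove the truncation with Proposition~\ref{prop:HRV-criterion}; your combinatorics run through the factorial powers $\Psi_m^{(j)}$ directly, so the limit $\frac{1}{(k+1)!}\int\tilde\phi^{\otimes(k+1)}\rmd M_{k+1,m}$ falls out of the definition $M_{k+1,m}=\bbE[\Psi^{(k+1)}(\cdot)\mathds{1}_{\{N\le m\}}]$ without Janossy measures. Both routes consume exactly the same hypothesis ($M_{k+1}$ finite, i.e.\ $\bbE[N^{k+1}]<\infty$): the paper spends it on the dominating function $j^{k+1}$ in the sum over $j$, while you spend it on the tail estimate $n^{k+1}\bbP(N>m,\ d(a_n^{-1}\Pi,\cN_k)>r_0)\le (nq_{n,r'})^{k+1}\bbE[\mathds{1}_{\{N>m\}}\binom{N}{k+1}]\to0$, whose geometric ingredient (large $\rho$-distance to $\cN_k$ forces at least $k+1$ points outside $F^{r'}$) is exactly the content of the paper's Lemma~\ref{lem:dist_Nk}, which you rederive from the explicit form of $\rho$. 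Your version is arguably cleaner combinatorially and reuses the converging-together machinery that the paper only deploys later for Theorem~\ref{theo:RV-D}; the paper's is shorter because it never has to check the approximation conditions of Proposition~\ref{prop:HRV-criterion}. The only details worth making explicit in your write-up are that $\mu^\ast_{k+1,m},\mu^\ast_{k+1}$ do lie in $\bbM(\cN\setminus\cN_k)$ (immediate from $M_{k+1}(\cT^{k+1})<\infty$ and $\mu(B_{0,r}^c)<\infty$), that the limit is non-null because $M_{k+1}$ is, and that the pointwise convergence $n\phi_n(t)\to\tilde\phi(t)$ uses the continuity condition $\mu(\partial B_{0,r}^{\cX})=0$, which you did impose and which holds along a sequence $r_i\downarrow0$ as required by Theorem~\ref{theo:cv-N-Nk}(iii).
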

Theorem~\ref{thm:RV-PP2} is indeed a generalization of Theorem~\ref{thm:RV-PP1}: for a Poisson point process $\Psi$ with finite intensity measure $\lambda$, the $k$-th factorial moment measure is finite for all $k\geq 1$ and equal to $M_{k}=\lambda^{\otimes k}$, so that Equations \eqref{thm:mu_star1} and \eqref{thm:mu_star2}  agree. 

\begin{example}\label{ex:renewal}
We provide an application of Theorem~\ref{thm:RV-PP2} and consider a stationary renewal point process on $\mathbb{R}$ observed on a finite time window $\mathcal{T}=[0,T]$ and regularly varying marks on $\mathcal{X}$. The distribution of the point process  is completely determined by the inter-arrival distribution $G$ on $(0,\infty)$  assumed to have a finite first moment $\tau>0$. The construction is as follows, see e.g. \citet[Chapter 4.2]{DVJ03} for more details. Let $(T_i)_{i\geq 1}$ be a sequence of positive random variables (arrival times) such that $T_1$ follows the equilibrium distribution $G_{eq}(dt)=\tau^{-1} tG(dt)$ and the inter-arrival times $T_{i+1}-T_i$, $i\geq 1$, are i.i.d. with distribution $G$ and independent of $T_1$. The number of arrivals up to time $t$ is given by the counting process $N(t)=\sum_{i\geq 1}\mathds{1}_{\{T_i\leq t\}}$, $t\geq 0$, and the renewal point process observed on the finite window $[0,T]$ is the finite point process  $\Psi= \sum_{i=1}^{N(T)}\varepsilon_{T_i}$. The existence of a first moment $\tau>0$ for $G$ and the choice of the initial distribution $T_1\sim G_{eq}$ ensure that $\Psi$ has intensity measure $M_1(\rmd t)=\tau^{-1}\rmd t$. For the sake of simplicity, we assume that $G$ has density $g$ so that the renewal measure $U=\sum_{i\geq 1} G^{\ast i}$ has Radon-Nikodym derivative $u=\sum_{i=1}^\infty g^{\ast i}$. Renewal theory yields the following expression for  higher order factorial moment measures:
\[
M_k(\rmd t_1,\ldots,\rmd t_k)=\tau^{-1}u( t_{(2)}-t_{(1)})\cdots u( t_{(k)}-t_{(k-1)})\rmd t_1\ldots\rmd t_k \quad k\geq 2,
\]
with $t_{(1)}<\ldots<t_{(k)}$ the order statistics pertaining to $(t_1,\ldots,t_k)$; see \citet[Example 5.4.b p 139]{DVJ03}. Then Equations~\eqref{thm:PiRV2} and~\eqref{thm:mu_star2} provide formulas for the successive hidden regular variation of the marked point process $\Pi=\sum_{i=1}^{N(T)} \varepsilon_{(T_i,X_i)}$. 

A completely explicit example is given by the inter-renewal distribution $G(\rmd t)=g(t)\rmd t$ with Gamma density $g(t)=te^{-t}\mathds{1}_{[0,\infty)}(t)$ and first moment $\tau=2$. The convolution property of the Gamma family entails $g^{\ast i}(t)=t^{2i-1}e^{-t}/(2i-1)!\mathds{1}_{[0,\infty)}(t)$. We deduce the renewal density $u(t)=(1-e^{-2t})/2\mathds{1}_{[0,\infty)}(t)$ by recognizing the hyperbolic sine in the power series. For $k=2$, the factorial moment is given by
\[
M_2(\rmd t_1,\rmd t_2)=\frac{1}{4}\left(1-e^{-2(t_{(2)}-t_{(1)})}\right)\rmd t_1\rmd t_2.
\]
Observe that the factorial density vanishes on the diagonal $t_1=t_2$ which corresponds to a repulsive effect compared to the Poisson case; see also Example 4.11, where further calculations for higher factorial moment measures  are given. This simple explicit example could be generalized to the class of matrix-exponential distributions for which the renewal density is available in analytic form, see~\cite{AB97} Theorem~3.1 for more details.
\end{example}

\subsection{Marked point processes based on triangular arrays}
This section is motivated by the following simple situation. When $E=[0,T]\times [0,\infty)$, consider i.i.d. claim sizes $X_i$ and a finite deterministic number of claims $m$ arising at discrete times $T_i=Ti/m$, $i=1,\ldots, m$. We are interested in the asymptotic regime when $m=m_n\to\infty$ and the regular variations in this regime. A singular feature of the arrival times is that  the empirical distribution $m_n^{-1}\sum_{i=1}^{m_n} \varepsilon_{iT/m_n}$  converges weakly as $n\to\infty$ to the uniform distribution on $[0,T]$.

The general abstract setting is the following: on $E=\cT\times\cX$, we consider a sequence  of independently marked point processes   
\begin{align}\label{triarry}
\Pi_n=\sum_{i=1}^{N_n} \varepsilon_{(T_i^n,X_i)},\quad n\geq 1,
\end{align}
where  the number of points $N_n$ is random with finite expectation $\mathbb{E}[N_n]=m_n$ and $m_n\to \infty$ as $n\to\infty$, the arrival times are given by a triangular array of $\cT$-valued random variables $\{T_i^n,n\geq 1, 1\leq i\leq N_n\}$ and, independently, the marks $X_i$ are i.i.d.  with distribution $\nu$ on $\cX\setminus\{0\}$. We assume no  independence in the triangular array, but we suppose the weak convergence in probability of the empirical distribution to a probability measure $\lambda$, that is 
\begin{equation}\label{Psi_triangle}
m_n^{-1}\Psi_n:=m_n^{-1}\sum_{i=1}^{N_n} \varepsilon_{T_i^n} \overset{\bbM_b(\cT)}{\longrightarrow} \lambda\quad \mbox{in probability as $n\to\infty$}.
\end{equation} 
The main difference from the previous Sections \ref{subsec:MPP} and \ref{subsec:general} is that the mean number of points $m_n=\mathbb{E}[N_n]$ tends to infinity as $n\to\infty$  whereas it was previously fixed.  We prove in the following theorem that similar regular variation results still hold, but with different rates.

\begin{theorem}[RV for sequences of marked point processes based on triangular arrays] \label{thm:RV-PP3}
Consider the sequence  of independently marked point processes    $\Pi_n$, $n\geq 1$, defined by Equation \eqref{triarry}. Assume  that $\nu\in RV(\cX\setminus\{0\}, (a_n),\mu)$ and  that Equation~\eqref{Psi_triangle} holds.
For $k\geq 0$, assume that $(N_n/m_n)^{k+1}$, $n\geq 1$, is uniformly integrable. Then,
 \begin{align}\label{thm:PiRV3}
 n^{k+1}\mathbb{P}(a_{nm_n}^{-1}\Pi_n\in\cdot) \longrightarrow \mu^\ast_{k+1}(\cdot) \quad \mbox{in $\bbM(\mathcal{N}\setminus\cN_k)$},
 \end{align}
with non-null limit measure $\mu_{k+1}^\ast$  as in Theorem~\ref{thm:RV-PP1} Equation~\eqref{thm:mu_star1} and $a_{nm_n}=a_{[nm_n]}$.
\end{theorem}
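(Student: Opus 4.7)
The plan is to apply the Laplace functional criterion in item (iii) of Theorem~\ref{theo:cv-N-Nk}. Fix a bounded Lipschitz test function $f:E\to[0,\infty)$ vanishing on $F^r=\cT\times B_{0,r}^\cX$ for some $r>0$ with $\mu(\{x\in\cX:d_\cX(x,0)=r\})=0$. Write $\tilde\Pi_n=a_{nm_n}^{-1}\Pi_n$ and $\Lambda_n=\{i\le N_n:d_\cX(X_i,0)\ge a_{nm_n}r\}$. Conditioning on $\Psi_n$ and decomposing each factor $\rme^{-f(T_i^n,a_{nm_n}^{-1}X_i)}$ into its $\{i\notin\Lambda_n\}$ and $\{i\in\Lambda_n\}$ parts (using that $f$ vanishes on $F^r$), a direct expansion of the resulting product together with independence of the marks yields the identity
\[
\bbE\bigl[\rme^{-\tilde\Pi_n(f)}\mathds{1}_{\{|\Lambda_n|\ge k+1\}}\bigm|\Psi_n\bigr]=\sum_{j\ge k+1}\frac{(1-p_n)^{N_n-j}}{j!}\,\Psi_n^{(j)}(\psi_n^{\otimes j}),
\]
where $p_n:=\nu(\{d_\cX(x,0)\ge a_{nm_n}r\})$, $\psi_n(t):=\bbE[\rme^{-f(t,a_{nm_n}^{-1}X)}\mathds{1}_{\{d_\cX(X,0)\ge a_{nm_n}r\}}]$, and $\Psi_n^{(j)}$ is the $j$-th factorial power of $\Psi_n$.

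Regular variation of $\nu$ at rate $a_{nm_n}$ gives $nm_np_n\to C_r:=\mu(\cX\setminus B_{0,r}^\cX)$ and, by dominated convergence combined with the Lipschitz continuity of $f$, the uniform convergence $\phi_n:=nm_n\psi_n\to\psi$ on $\cT$, where $\psi(t):=\int_{d_\cX(x,0)\ge r}\rme^{-f(t,x)}\,\mu(\rmd x)$. Since $\lambda$ is a probability measure, the weak convergence in probability $m_n^{-1}\Psi_n\to\lambda$ entails $N_n/m_n\to 1$ in probability, hence $N_np_n=(N_n/m_n)(m_np_n)\to 0$ in probability. For the leading $j=k+1$ term, the identity $n^{k+1}\psi_n^{\otimes(k+1)}=m_n^{-(k+1)}\phi_n^{\otimes(k+1)}$ recasts its contribution as
\[
\frac{(1-p_n)^{N_n-k-1}}{(k+1)!}\,m_n^{-(k+1)}\,\Psi_n^{(k+1)}(\phi_n^{\otimes(k+1)}).
\]
The weak convergence $m_n^{-1}\Psi_n\to\lambda$ combined with $\phi_n\to\psi$ uniformly gives $m_n^{-1}\Psi_n(\phi_n)\to\lambda(\psi)$ in probability; the diagonal error between factorial and ordinary tensor powers is of order $(N_n/m_n)^k/m_n\to 0$; and $(1-p_n)^{N_n-k-1}\to 1$ in probability. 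The deterministic bound $m_n^{-(k+1)}\Psi_n^{(k+1)}(\phi_n^{\otimes(k+1)})\le\|\phi_n\|_\infty^{k+1}(N_n/m_n)^{k+1}$ together with uniform integrability of $(N_n/m_n)^{k+1}$ upgrades this to $L^1$ convergence, so the expectation of the leading term tends to $\lambda(\psi)^{k+1}/(k+1)!$, which equals $\int\rme^{-\pi(f)}\mathds{1}_{\{\pi(E\setminus F^r)\ge k+1\}}\,\mu^\ast_{k+1}(\rmd\pi)$ by Fubini and the definition~\eqref{thm:mu_star1} of $\mu^\ast_{k+1}$.

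The tail terms $j\ge k+2$ are disposed of by a double truncation. The crude union bound $\bbP(|\Lambda_n|\ge k+1\mid\Psi_n)\le\binom{N_n}{k+1}p_n^{k+1}$ together with $\rme^{-\tilde\Pi_n(f)}\le 1$ and $nm_np_n$ bounded shows that $n^{k+1}$ times the whole expectation is pointwise dominated by $C(N_n/m_n)^{k+1}$; restricted to $\{N_n>Mm_n\}$, it is therefore dominated by $C\,\bbE[(N_n/m_n)^{k+1}\mathds{1}_{\{N_n/m_n>M\}}]$, which tends to $0$ as $M\to\infty$ uniformly in $n$ by uniform integrability. On the complementary event $\{N_n\le Mm_n\}$ one has $N_np_n=O(M/n)$, so the Taylor-type remainder $\sum_{j\ge k+2}\binom{N_n}{j}p_n^j\le\rme^{N_np_n}(N_np_n)^{k+2}/(k+2)!$ multiplied by $n^{k+1}$ is bounded by $C_M/n\to 0$ for each fixed $M$. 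The main obstacle to a direct argument is that $(N_n/m_n)^{k+2}$ need not be uniformly integrable; this double truncation—controlling the high-$N_n$ regime by the $(k+1)$-st uniform integrability and the moderate-$N_n$ regime deterministically—is exactly what circumvents it. Sending first $n\to\infty$ and then $M\to\infty$ verifies condition (iii) of Theorem~\ref{theo:cv-N-Nk} and establishes~\eqref{thm:PiRV3}.
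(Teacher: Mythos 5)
Your proof is correct and follows essentially the same route as the paper's: the Laplace-functional criterion of Theorem~\ref{theo:cv-N-Nk}(iii), conditioning on $\Psi_n$ to reduce the leading contribution to the factorial power $\Psi_n^{(k+1)}$ tested against the uniformly convergent mark integrals (the paper's Lemmas~\ref{lemunifL}--\ref{lemtodo2}), and uniform integrability of $(N_n/m_n)^{k+1}$ to pass from convergence in probability to convergence of expectations. The only differences are organizational: you treat $k=0$ and $k\geq 1$ uniformly via a single combinatorial identity, and you replace the paper's $\min$-of-two-binomial-bounds device for the $j\geq k+2$ remainder by an explicit double truncation in $N_n/m_n$ — the same estimate, made more explicit.
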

In the following examples, we apply Theorem \ref{thm:RV-PP3} interchanging the roles of the sequences $(m_n)$ and $(n)$ in order to compare the result with classical large deviation principles.
\begin{example}\label{ex:rescbinproc}
Consider a probability measure $\lambda\in\mathbb{M}_b(\mathcal{T})$ and  the binomial point process $\Pi_n= \sum_{i=1}^n\varepsilon_{(T_i,X_i)}$, 
with the $(T_i,X_i)$, $1\leq i\leq n$ i.i.d. with distribution $\lambda\otimes\nu$.  By the law of large numbers,  Equation~\eqref{Psi_triangle} holds  interchanging the roles of the sequences $(m_n)$ and $(n)$ and  with $T_i^n=T_i$. If $\nu\in RV(\cX\setminus\{0\}, (a_n),\mu)$, Theorem \ref{thm:RV-PP3} provides the convergence 
\begin{equation}\label{eq:RV-PP3-particularcase}
m_n^{k+1}\bbP(a_{nm_n}^{-1}\Pi_n\in \cdot) \longrightarrow \mu^\ast_{k+1}(\cdot) \quad \mbox{in $\bbM(\mathcal{N}\setminus\cN_k)$}\,,\qquad k\ge0\,,
\end{equation}
for any sequence $m_n\to \infty$ as $n\to \infty$, with $ \mu^\ast_{k+1}$ as in Equation~\eqref{thm:mu_star1}.
\end{example}

\begin{example}
A typical situation where Theorem~\ref{thm:RV-PP3} applies is when $\mathcal{T}=[0,T]$ and $T_i^n=iT/n$, $1\leq i\leq n$  interchanging the roles of the sequences $(m_n)$ and $(n)$ as above.  Then Equation~\eqref{Psi_triangle} holds with $\lambda$ the uniform distribution on $[0,T]$. If $\nu\in RV(\cX\setminus\{0\}, (a_n),\mu)$, it is well-known that 
$\Pi_n= \sum_{i=1}^n\varepsilon_{(iT/n,X_i)}$ suitably rescaled converges in distribution to a Poisson point process \citep[Theorem 6.3]{Resnick2007}. More precisely, $a_{n}^{-1}\Pi_n\stackrel{d}\longrightarrow \Pi$ in $\mathcal{N}$, with $\Pi$ a Poisson point process with intensity $\lambda \otimes \mu$. Theorem \ref{thm:RV-PP3} considers the large deviation regime and states the  regular variation properties of $\Pi_n$ as in Equation~\eqref{eq:RV-PP3-particularcase}. In particular when $\alpha>1$ and using the notation of Equation \eqref{eq:succ-RV-bis} we obtain 
\[
(b_n/n)^{k+1}\bbP(n^{-1}\Pi_n\in \cdot) \longrightarrow \mu^\ast_{k+1}(\cdot) \quad \mbox{in $\bbM(\mathcal{N}\setminus\cN_k)$}\,,\qquad k\ge0\,.
\]
To be even more specific,  assume  that the marks $(X_i)_{i\geq 1}$ are i.i.d. with Pareto distribution $\nu$, i.e. $\nu((x,\infty))=\mathbb{P}(X_i>x)=x^{-\alpha}$, $x>1$, $\alpha>1$. Then $b_n\nu(n\cdot)\to \mu(\cdot)$ in $\bbM(\cX\setminus\{0\})$ with sequence $b_n=n^{\alpha}$ and we obtain the large deviations of $\Pi_n$ with successive rates $n^{(k+1)(\alpha-1)}$. 
\end{example}

\begin{example}\label{ex:ldpp}
A slightly more complex situation is based on a stationary renewal sequence 
$(T_i)_{i\geq 1}$ on $\mathcal{T}=[0,\infty)$. As in Example~\ref{ex:renewal}, we assume the  inter-arrival times $T_i - T_{i-1}$ to be i.i.d. with finite mean $\tau>0$. We observe the process on a growing window $[0,nT]$, $T>0$, $n\geq 1$. Let $N_n=N(nT)$ be the number of arrivals up to time $nT$ and $T_i^n=T_i/n$, $1\leq i\leq N_n$. By stationarity, $m_n=\bbE[N_n]=nT/\tau$. Renewal theory ensures that   $m_n^{-1}\Psi_n=m_n^{-1}\sum_{i=1}^{N(nT)}\varepsilon_{T_i/n}$ converges to the uniform distribution $\lambda$ on $[0,T]$ so that assumption~\eqref{Psi_triangle} holds.  Furthermore, $\mathbb{E}[(N_n/m_n)^k]\to 1$ for all $k\geq 0$ which implies the uniform integrability assumptions, see Exercise 4.1.2 in \cite{DVJ03}.  Theorem~\ref{thm:RV-PP3} states the successive regular variations of  $\Pi_n= \sum_{i=1}^{N(nT)}\varepsilon_{(T_i/n,X_i)}$: if $\nu\in \mathrm{RV}(\cX\setminus\{0\}, (a_n),\mu)$, then
$$
n^{k+1}\bbP(a_{nm_n}^{-1}\Pi_n\in \cdot) \longrightarrow \mu^\ast_{k+1}(\cdot) \quad \mbox{in $\bbM(\mathcal{N}\setminus\cN_k)$}\,.
$$
\end{example}

\section{Applications}\label{sec:app}
\subsection{Regular variation of risk processes in Skorokhod space}\label{sec:app1}
We focus on risk processes that are based on marked point processes on $\cT\times\cX=[0,T]\times [0,\infty)$. Recall that the marked point process
\[
\Pi=\sum_{i=1}^N \varepsilon_{(T_i,X_i)}
\]
represents  the situation where $N$ claims arise on $[0,T]$ at  times $0\leq T_1\leq\cdots\leq T_N\leq T$ and with sizes $X_1,\ldots,X_N>0$.  In other words, $N$ is an integer-valued random variable defined as $N:= \# \{ i \geq 1 : T_i \leq T \}$ that is finite almost surely. We focus here on the associated risk process defined as
\begin{equation}\label{eq:risk_process}
R(t)=\sum_{i=1}^N X_i \mathds{1}_{\{T_i\leq t\}},\quad t\in[0,T].
\end{equation}
For an insurance company, it models the evolution over time of the total claim amount. When the claim arrival times $T_1\leq \cdots\leq T_N$ form an homogeneous Poisson point process and the claim sizes are i.i.d.,  $(R(t))_{0\leq t\leq T}$ is a compound Poisson process. More generally, we consider the case when $\Pi$ is an independently marked point process as in Section~\ref{subsec:general}.
The risk process $R$ is a pure jump process and can be seen as a random element of the Skorokhod space $\mathbb{D}=\mathbb{D}([0,T],\mathbb{R})$ of c\`ad-l\`ag functions. Recall that endowed with the Skorokhod metric, $\mathbb{D}$ is a complete separable metric space,  see \citet[Chapter 3]{B68}. We define $\mathbb{D}_k\subset\mathbb{D}$ the closed cone of c\`ad-l\`ag functions with at most $k$ discontinuity points (or jumps) on $[0,T]$. 

In the next theorem, we derive the (hidden) regular variation properties of the risk process $R$ on $\mathbb{D}$ from the regular variation properties of $\Pi$ on $\cN$. The statement is very similar to Theorem~\ref{thm:RV-PP2} and is in fact derived from it using a continuous mapping theorem together with technical truncation arguments relying on Proposition~\ref{prop:HRV-criterion}.  

\begin{theorem}\label{theo:RV-D} Let $\cT\times\cX=[0,T]\times [0,\infty)$ and $\Pi$ be an independently marked point process on $\cT\times\cX$ as defined in \eqref{MPP}. Assume $\nu\in\mathrm{RV}([0,\infty)\setminus\{0\},\{a_n\},\mu)$ with $\mu(\rmd x)=\alpha x^{-\alpha-1}\rmd x$ for some $\alpha>0$. Let $k\geq 0$ and assume $\Psi$ has a finite and non-null $(k+1)$-th factorial moment measure noted $M_{k+1}$.  Then 
\begin{equation}\label{eq:thm:RV-D}
 n^{k+1}\mathbb{P}(a_{n}^{-1}R\in\cdot) \longrightarrow \mu^{\#}_{k+1}(\cdot) \quad \mbox{in $\bbM(\mathbb{D} \setminus \mathbb{D}_k)$},
 \end{equation}
 with non-null limit measure defined, for $B\in\cB( \mathbb{D} \setminus \mathbb{D}_k)$, by
\begin{equation}\label{eq:thm:RV-D-mu}
\mu^{\#}_{k+1}(B)=\int_{E^{k+1}} \mathds{1}_{\left\{\left(\sum_{i=1}^{k+1}x_i\mathds{1}_{\{t_i\le u\}}\right)_{0\leq u\leq T}\in B\right\}} M_{k+1}(\rmd t_1,\ldots,\rmd t_{k+1})\otimes_{i=1}^{k+1}\mu(\rmd x_i).
\end{equation}
\end{theorem}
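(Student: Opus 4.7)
The strategy is to derive Theorem~\ref{theo:RV-D} from Theorem~\ref{thm:RV-PP2} by pushing the (hidden) regular variation of $\Pi$ through the summation map
\[
S\colon \pi=\sum_i\varepsilon_{(t_i,x_i)}\longmapsto \Bigl(u\mapsto \sum_i x_i\mathds{1}_{\{t_i\le u\}}\Bigr),\qquad \cN\longrightarrow \mathbb{D},
\]
for which $R=S(\Pi)$. The map $S$ is positively homogeneous, measurable and sends $\cN_k$ into $\mathbb{D}_k$. Two obstructions forbid a direct continuous mapping argument: first, $S$ is not $J_1$-continuous at point measures with ties in the time coordinate, since two nearby jumps cannot be merged by a small time change; second and more crucially, $S$ does not preserve the bounded-away structure, because a configuration with many small marks can stay close to $\cN_k$ while $S$ of it drifts far from $\mathbb{D}_k$ in supremum distance. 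I therefore combine continuous mapping on a truncation of $\Pi$ with the approximation criterion of Proposition~\ref{prop:HRV-criterion}.

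\textbf{Truncation and continuous mapping.} For $m\ge 1$ set $\Pi_m = \sum_i \varepsilon_{(T_i,X_i)}\mathds{1}_{\{X_i>1/m\}}$ and $R_m = S(\Pi_m)$. Since the restriction map $\pi\mapsto \pi|_{\cT\times(1/m,\infty)}$ is continuous on sets bounded away from $\cN_k$, Theorem~\ref{thm:RV-PP2} yields
\[
n^{k+1}\bbP(a_n^{-1}\Pi_m\in\cdot) \longrightarrow \mu^*_{k+1,m}(\cdot) \quad\mbox{in $\bbM(\cN\setminus\cN_k)$},
\]
where $\mu^*_{k+1,m}$ denotes the restriction of $\mu^*_{k+1}$ to configurations with all marks $>1/m$. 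Restricted to finite point measures supported in $\cT\times(1/m,\infty)$ with pairwise distinct time coordinates, $S$ is continuous into $\mathbb{D}$ and its preimages of sets bounded away from $\mathbb{D}_k$ are bounded away from $\cN_k$ (each jump of height $>r$ in $R_m$ must come from an atom of $\Pi_m$ with mark $>r$). Under the mild requirement that $M_{k+1}$ does not charge the partial diagonals $\{t_i=t_j,\,i\neq j\}$ -- which follows from $\Psi$ being simple and $M_{k+1}$ absolutely continuous in the diagonal direction, as in Example~\ref{ex:renewal} -- the set of discontinuities of $S$ has $\mu^*_{k+1,m}$-measure zero. The portmanteau criterion (Proposition~\ref{ConvMEF}(ii)) then gives
\[
n^{k+1}\bbP(a_n^{-1}R_m\in\cdot) \longrightarrow \mu^{\#}_{k+1,m}(\cdot):=S_\ast \mu^*_{k+1,m}(\cdot) \quad\mbox{in $\bbM(\mathbb{D}\setminus\mathbb{D}_k)$}.
\]

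\textbf{Passage to the limit.} Invoke Proposition~\ref{prop:HRV-criterion} with $X_{n,m}=R_m$ and $X_n=R$. Condition~(i) is the previous display; condition~(ii) is the monotone increase $\mu^{\#}_{k+1,m}\uparrow \mu^{\#}_{k+1}$, read directly off~\eqref{eq:thm:RV-D-mu}; and condition~(iii) reduces, since $\|R-R_m\|_\infty = \sum_i X_i\mathds{1}_{\{X_i\le 1/m\}}$ is a uniform upper bound on the Skorokhod distance $d_{\mathbb{D}}(R,R_m)$, to
\[
\lim_{m\to\infty}\limsup_{n\to\infty} n^{k+1}\,\bbP\Bigl(\sum_{i=1}^N X_i\mathds{1}_{\{X_i\le 1/m\}}>\varepsilon a_n,\ d_{\mathbb{D}}(a_n^{-1}R_m,\mathbb{D}_k)>r\Bigr)=0,
\]
together with its analogue with $R_m$ replaced by $R$ in the second event.

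\textbf{Main obstacle.} The verification of this negligibility is the technical heart of the proof. The side event $\{d_{\mathbb{D}}(a_n^{-1}R_m,\mathbb{D}_k)>r\}$ forces at least $k+1$ marks to exceed $r a_n$, which by Theorem~\ref{thm:RV-PP2} already has probability of the correct order $n^{-(k+1)}$; the additional factor $\bbP(\sum_i X_i\mathds{1}_{\{X_i\le 1/m\}}>\varepsilon a_n)$ must be made small uniformly in $n$ as $m\to\infty$. Conditioning on $N$ and using the independence of the $X_i$'s from both $N$ and the times $T_i$, Markov's inequality handles the case $\alpha>1$ via $\bbE[X_1\mathds{1}_{\{X_1\le 1/m\}}]\to 0$, while for $\alpha\le 1$ one uses Karamata/Potter estimates exploiting that the truncated sum is regularly varying of index $\alpha$ on a shrinking scale. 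Extracting the product structure of the joint probability without losing the $n^{k+1}$ factor is the delicate point, which hinges on the moment bound $\bbE[N^{k+1}]<\infty$ that is equivalent to the finiteness of $M_{k+1}$.
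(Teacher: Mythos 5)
Your overall architecture (truncate the point process, push through the summation map by continuous mapping, then remove the truncation via Proposition~\ref{prop:HRV-criterion}) is exactly the paper's, but the truncation is set at the wrong scale and this breaks the first step. You truncate at a \emph{fixed} level, keeping marks $X_i>1/m$; after rescaling by $a_n$ this is a truncation at level $1/(ma_n)\to 0$, so $a_n^{-1}\Pi_m$ still carries arbitrarily small marks and the obstruction you yourself identified in your plan is untouched: a configuration with $k+1$ time-clusters of many tiny marks each summing to more than $2r$ lies in $S^{-1}(\{y:d(y,\mathbb{D}_k)>r\})$ while being arbitrarily close to $\cN_k$, so the preimage condition of the mapping theorem (and likewise the portmanteau route) fails for $S$ on the laws of $a_n^{-1}\Pi_m$. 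Moreover the limit you announce is misidentified: since the truncation level vanishes after rescaling, the limit of $n^{k+1}\bbP(a_n^{-1}\Pi_m\in\cdot)$ is the full $\mu^*_{k+1}$, not its restriction to marks $>1/m$, so condition~(ii) of Proposition~\ref{prop:HRV-criterion} as you state it ($\mu^{\#}_{k+1,m}\uparrow\mu^{\#}_{k+1}$) is inconsistent with your own Step~1. The paper truncates at level $\delta$ \emph{after} rescaling, i.e.\ keeps $a_n^{-1}X_i>\delta$: then every jump of the truncated process of height $>2r$ forces a rescaled mark $>\delta$, so $T_\delta^{-1}(B)$ is bounded away from $\cN_k$, the limit genuinely is the restriction of $\mu^{\#}_{k+1}$ to marks $>\delta$, and the monotone convergence as $\delta\downarrow 0$ makes sense. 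Everything in your proof after the first display has to be rewritten with this $n$-dependent threshold.

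Two further points. First, your Step~3 sketch (bounding $\bbP(\sum_i X_i\mathds{1}_{\{X_i\le \cdot\}}>\varepsilon a_n)$ by Markov/Karamata and multiplying by the probability of $k+1$ exceedances) loses the $n^{k+1}$ factor for $k\ge 1$ if the two events are estimated separately, as you half-concede; the paper avoids all of this with the deterministic bound $\|R_n^0-R_n^\delta\|_\infty\le \sum_i a_n^{-1}X_i\mathds{1}_{\{a_n^{-1}X_i\le\delta\}}\le \delta N$, so that the bad event forces $N>\varepsilon/\delta$, and intersecting with the event that $k+1$ marks exceed $2ra_n$ (Lemma~\ref{lem:binomial}) reduces condition~(iii) to $\bbE[N^{k+1}\mathds{1}_{\{N>\varepsilon/\delta\}}]\to 0$, which is exactly where $\bbE[N^{k+1}]<\infty$ enters. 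Second, you smuggle in an extra hypothesis (that $M_{k+1}$ does not charge the partial diagonals) that is not in the statement of the theorem; the continuity issue at time ties is a real subtlety of $S$ in the $J_1$ topology, but you cannot strengthen the hypotheses of the result you are asked to prove -- the paper instead only needs continuity of $T_\delta$ off $\{\pi([0,T]\times\{\delta\})\neq 0\}$, which is $\mu^*_{k+1}$-null because $\mu$ is absolutely continuous.
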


 The limit measure $\mu^{\#}_{k+1}$ is the image measure of  $\mu^*_{k+1}$ defined in Equation~\eqref{thm:mu_star2} under the mapping
\[
\pi=\sum_{i\in I}\varepsilon_{(t_i,x_i)}\in\cN \mapsto \left(\sum_{i\in I} x_i\mathds{1}_{\{t_i\leq u\}}\right)_{0\leq u\leq T}\in\mathbb{D}
\]
with $I$ a countable index set. In the case $k=0$,  $\mathbb{D}_0$ is the space of continuous functions on $[0,T]$ and Theorem~\ref{theo:RV-D} provides hidden regular variation in  $\mathbb{D}\setminus\mathbb{D}_0$ with a single large jump heuristic: in this regime, the rescaled distribution of the risk process $R$ converges in $\bbM(\mathbb{D}\setminus \mathbb{D}_0)$ to $\mu^{\#}_1$ which is concentrated on the cone of c\`ad-l\`ag functions with exactly one jump.
For $k\geq 1$, we obtain successive hidden regular variation and multiple large jump heuristics: removing the cone $\mathbb{D}_k$, we obtain a limit measure $\mu^{\#}_{k+1}$ concentrated on the cone of c\`ad-l\`ag functions with  $k+1$ jumps. This is closely related to the results by~\cite{RBZ19}.

\begin{remark}
The cone $\mathbb{D}_k$ is in fact larger than the exact support of $\mu^{\#}_k$. Indeed, in view of Equation~\eqref{eq:thm:RV-D-mu}, the support of $\mu^{\#}_k$ is the cone of pure jump process with exactly $k$ jumps. Let us denote by $\mathbb{J}_k$, $k\geq 0$, the cone  consisting of pure jump functions with at most $k$ jumps. In comparison, $\mathbb{D}_k$ is the cone of c\`ad-l\`ag functions with at most $k$ jumps and is strictly larger than $\mathbb{J}_k$. When considering successive hidden regular variations as discussed in the end of Section~\ref{sec:BRV}, we should rather consider the increasing sequence of cones $\mathbb{J}_k$, $k\geq 0$, instead of the sequence $\mathbb{D}_k$, $k\geq 0$. Such results are stronger but harder to establish. In Corollary~\ref{cor:reinsurance} below, we cover the triangular array case in connection with an application to reinsurance.
\end{remark}

\medskip
The next theorem considers  risk processes built on triangular arrays. Let $\Pi_n$, $n\geq 1$, be the sequence of marked point processes  defined by Equation~\eqref{triarry}, where for simplicity $N_n=\mathbb{E}[N_n]=m_n$ is deterministic. When $\alpha>1$, it is as usual necessary to center the risk process  and we consider
\[
\tilde{R}_n(t)=\sum_{i=1}^{m_n} (X_i-c)\mathds{1}_{\{T_i^n\leq t\}},\quad t\in [0,T], n\geq 1,
\]
where $c=\bbE[X]$ if $X$ has a finite expectation and $c=0$ otherwise. 
\begin{theorem}\label{theo:RV-D2} Let $\cT\times\cX=[0,T]\times [0,\infty)$ and $\Pi_n$, $n\geq 1$, be the sequence of marked point processes defined by Equation~\eqref{triarry}. Assume $\Psi_n$  satisfies assumption~\eqref{Psi_triangle} and the regular variation condition  $\nu\in\mathrm{RV}([0,\infty)\setminus\{0\},\{a_n\},\mu)$ with $\mu(\rmd x)=\alpha x^{-\alpha-1}\rmd x$ for some $\alpha>0$. Depending on the value of $\alpha>0$, we assume furthermore:
\begin{itemize}
\item[-] $\bbE[X_1]<\infty$ if $\alpha =1$;
\item[-] $\bbE[X_1^2]<\infty$ if $\alpha=2$;
\item[-] $a_{nm_n}^2/m_n\to \infty$ as $n\to\infty$ if $\alpha \ge 2$.
\end{itemize}
Then, for $k\geq 0$, 
\begin{align}\label{eq:thm:RV-D2}
 n^{k+1}\mathbb{P}(a_{nm_n}^{-1}\tilde R_n\in\cdot) \longrightarrow \mu^{\#}_{k+1}(\cdot) \quad \mbox{in $\bbM(\mathbb{D} \setminus \mathbb{D}_k)$},
 \end{align}
 with $\mu^{\#}_{k+1}$ given by Equation~\eqref{eq:thm:RV-D-mu}.
\end{theorem}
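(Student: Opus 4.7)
The plan is to follow the strategy of Theorem~\ref{theo:RV-D} based on Proposition~\ref{prop:HRV-criterion}, replacing Theorem~\ref{thm:RV-PP2} by Theorem~\ref{thm:RV-PP3} as the foundational point-process regular variation result and adding an extra layer of truncation to handle the centering by $c$. The centering is non-trivial only when $\alpha\geq 1$, and its effect on sums of $m_n$ bounded centered variables must be controlled using the moment assumptions. Throughout, $\|\cdot\|_\infty$ denotes the sup-norm on $[0,T]$.

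First, for $\delta>0$ consider the truncated large-claim process
\[
R_n^{>\delta}(t)=\sum_{i=1}^{m_n}X_i\mathds{1}_{\{X_i>\delta a_{nm_n}\}}\mathds{1}_{\{T_i^n\leq t\}},\quad t\in[0,T].
\]
The mapping $\pi\mapsto\bigl(\int x\mathds{1}_{\{s\leq t,\,x>\delta\}}\pi(\rmd s,\rmd x)\bigr)_{0\leq t\leq T}$ from $\cN$ to $\mathbb{D}$ is continuous at every $\pi\in\cN\setminus\cN_k$ whose support avoids the hyperplane $\{x=\delta\}$, a $\mu^*_{k+1}$-null set. Theorem~\ref{thm:RV-PP3} combined with the continuous mapping theorem thus yields
\[
n^{k+1}\bbP\bigl(a_{nm_n}^{-1}R_n^{>\delta}\in\cdot\bigr)\longrightarrow \mu^{\#,\delta}_{k+1}(\cdot)\quad\text{in }\bbM(\mathbb{D}\setminus\mathbb{D}_k),
\]
where $\mu^{\#,\delta}_{k+1}$ is the image of $\mu^*_{k+1}$ restricted to configurations whose marks all exceed $\delta$, under the jump map $(t_i,x_i)_i\mapsto(\sum x_i\mathds{1}_{\{t_i\leq u\}})_u$. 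Choosing a sequence $\delta_m\downarrow 0$, the explicit formula~\eqref{eq:thm:RV-D-mu} and monotone convergence give $\mu^{\#,\delta_m}_{k+1}\to\mu^{\#}_{k+1}$ in $\bbM(\mathbb{D}\setminus\mathbb{D}_k)$, verifying hypotheses (i) and (ii) of Proposition~\ref{prop:HRV-criterion} with $X_{n,m}=R_n^{>\delta_m}$ and $X_n=\tilde R_n$.

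The decisive step is to verify the negligibility condition (iii):
\[
\lim_{\delta\downarrow 0}\limsup_{n\to\infty}n^{k+1}\bbP\bigl(\|a_{nm_n}^{-1}(\tilde R_n-R_n^{>\delta})\|_\infty>\varepsilon\bigr)=0.
\]
Letting $c_{n,\delta}=\bbE[X_1\mathds{1}_{\{X_1\leq\delta a_{nm_n}\}}]$, write
\[
\tilde R_n(t)-R_n^{>\delta}(t)=\sum_{i=1}^{m_n}\bigl(X_i\mathds{1}_{\{X_i\leq\delta a_{nm_n}\}}-c_{n,\delta}\bigr)\mathds{1}_{\{T_i^n\leq t\}}+(c_{n,\delta}-c)\Psi_n([0,t]).
\]
Conditionally on $\Psi_n$, the first term is a partial sum of $m_n$ independent centered variables bounded by $\delta a_{nm_n}$ whose conditional variance is controlled by $Cm_n(\delta a_{nm_n})^{2-\alpha}$ when $\alpha<2$ (Karamata) and by $Cm_n\bbE[X_1^2]$ when $\alpha\geq 2$. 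Combined with the scaling identity $m_n a_{nm_n}^{-\alpha}\asymp 1/n$ and Doob's maximal inequality together with a Fuk--Nagaev or Bennett-type exponential bound exploiting the bounded support, the probability of a deviation larger than $\varepsilon a_{nm_n}$ decays faster than $n^{-(k+1)}$ as first $n\to\infty$ and then $\delta\downarrow 0$. The extra assumption $a_{nm_n}^2/m_n\to\infty$ for $\alpha\geq 2$ is what ensures that the Gaussian fluctuations of order $\sqrt{m_n}/a_{nm_n}$ vanish. For the second term, $c_{n,\delta}-c=-\bbE[X_1\mathds{1}_{\{X_1>\delta a_{nm_n}\}}]$ is $o(1)$ when $\alpha=1$ and decays like $(\delta a_{nm_n})^{1-\alpha}$ when $\alpha>1$, while $\Psi_n([0,T])/m_n\to\lambda([0,T])$ by~\eqref{Psi_triangle}; after rescaling by $a_{nm_n}$ the contribution becomes negligible.

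The main obstacle will be calibrating this truncation argument so as to produce decay faster than $n^{-(k+1)}$: one must choose the large-deviation inequality (Rosenthal with sufficiently high moment, or a Bennett/Bernstein exponential bound using the explicit envelope $\delta a_{nm_n}$) so that the prefactors behave well as $\delta\downarrow 0$, and the three regimes $\alpha<2$, $\alpha=2$, $\alpha>2$ must be handled separately to match the moment assumptions in the statement. Once this is done, Proposition~\ref{prop:HRV-criterion} concludes.
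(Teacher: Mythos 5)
Your Steps 1 and 2 (truncation at level $\delta a_{nm_n}$, continuous mapping from Theorem~\ref{thm:RV-PP3}, monotone convergence of $\mu^{\#,\delta}_{k+1}$) match the paper's argument. The gap is in your verification of condition iii) of Proposition~\ref{prop:HRV-criterion}: you replace the required joint event by the \emph{unconditional} bound
$\lim_{\delta}\limsup_{n}n^{k+1}\bbP(\|a_{nm_n}^{-1}(\tilde R_n-R_n^{>\delta})\|_\infty>\varepsilon)=0$,
discarding the intersection with $\{d(\cdot,\mathbb{D}_k)>r\}$. This makes the estimate far too demanding for the tools you invoke. The Fuk--Nagaev polynomial term for the sum of $m_n$ truncated centered variables at level $x=\varepsilon a_{nm_n}$ is, by Karamata, of order $m_n\,\bbE[X_1^p\mathds{1}_{\{X_1\le\delta a_{nm_n}\}}]/(\varepsilon a_{nm_n})^p\asymp \delta^{p-\alpha}\varepsilon^{-p}\,n^{-1}$ --- only $O(n^{-1})$, whatever $p$ --- so multiplying by $n^{k+1}$ diverges for every $k\ge1$. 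A Bennett/Bernstein bound fares no better: with total variance $\asymp \delta^{2-\alpha}a_{nm_n}^2/n$ (for $\alpha<2$) the exponential term converges to $\exp(-c\varepsilon/\delta)$, a constant in $n$. And for $\alpha\ge2$ the Gaussian term is $\exp(-c\varepsilon^2a_{nm_n}^2/m_n)$, which the hypothesis $a_{nm_n}^2/m_n\to\infty$ only forces to be $o(1)$, not $o(n^{-(k+1)})$; so even for $k=0$ your unconditional claim does not follow from the stated assumptions.

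The paper's Step 3 keeps the intersection with $\{d(\tilde R_n^0,\mathbb{D}_k)>r\}=\{\Delta_{k+1}(R_n^0)>2r\}$, an event of probability $\asymp n^{-(k+1)}$ (it forces $k+1$ marks above $2ra_{nm_n}$). Conditioning on the order statistics (via uniforms and the quantile transform, then Etemadi's inequality for the remaining $m_n-k-1$ exchangeable small terms), it only needs the conditional probability $\pi_n^\delta(u)$ of a large partial-sum deviation to be $o(1)$ uniformly --- which is exactly what Markov ($\alpha<1$), Chebyshev ($1\le\alpha<2$) and Fuk--Nagaev ($\alpha\ge2$, using $a_{nm_n}^2/m_n\to\infty$) deliver. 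The $n^{-(k+1)}$ decay then comes entirely from the conditioning event, not from the concentration inequality. To repair your proof you must reinstate this intersection; as written, the decisive step fails.
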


\begin{example}
We consider the setting of \cite{RBZ19}. Let $(X_n(t))_{0\le t\le 1}$ be a L\'evy process  with L\'evy measure $\nu$ and jump part 
$$
J_n(t)=\int_{x>1} xN([0,nt]\times dx)
$$
where $N$ is the Poisson random measure with  measure $Leb\otimes \nu$ on $[0,n]\times (0,\infty)$ and $Leb$
denotes the Lebesgue measure. Then $J_n(t)$ is a compound Poisson process  and its centered version $\tilde J_n$ is tail equivalent to the centered version of the L\'evy process $X_n$; see Proposition 6.1  of \cite{RBZ19}. The process $\tilde J_n$ is close to $\tilde R_n$ when approximating the number of jumps $N([0,n]\times [1,\infty))$ by its expectation $n$, interchanging the roles of $m_n$ and $n$ and considering $T_i^n$ as in Example \ref{ex:ldpp} for $T=\tau=1$. Theorem \ref{theo:RV-D2} yields
$$
m_n^{k+1}\mathbb{P}(a_{nm_n}^{-1}\tilde R_n\in\cdot) \longrightarrow \mu^{\#}_{k+1}(\cdot) \quad \mbox{in $\bbM(\mathbb{D} \setminus \mathbb{D}_k)$}\,.
$$ 
This is the one-sided large deviation principles for centered L\'evy processes of \cite{RBZ19} considering $(m_n)$ so that $a_{n m_n}\sim n$, i.e. $m_n\sim (n\nu(n,\infty))^{-1}\to \infty$ as $n\to \infty$. It shows that $X_n$, $\tilde J_n$ and $\tilde R_n$ satisfies the same one-sided large deviation principles.
\end{example}

\begin{remark}
Some remarks on the conditions of Theorem \ref{theo:RV-D} are in order.\\
The integrability conditions for $\alpha= 1$ or $\alpha=2$ could be dropped thanks to extra classical but technical arguments. For the sake of simplicity, we focus on the integrable cases only.\\
Because the sequence $(a_n)$ is regularly varying with index $1/\alpha$, the condition $a_{nm_n}^2/m_n\to\infty $ is satisfied as soon as $\alpha>2$ and $m_n=o(n^p)$ for some $p<2/(\alpha-2)$. This ensures that the growth $m_n\to\infty$ is not too fast.
\end{remark}
\begin{remark} When $\alpha\geq 1$, the centering is not necessary under the extra assumption $a_{nm_n}/m_n\to \infty$. That is the risk process 
$R_n(t) = \sum_{i=1}^{m_n} X_i\mathds{1}_{\{T_i^n\leq t\}}$ satisfies
\[
 n^{k+1}\mathbb{P}(a_{nm_n}^{-1}R_n\in\cdot) \longrightarrow \mu^{\#}_{k+1}(\cdot) \quad \mbox{in $\bbM(\mathbb{D} \setminus \mathbb{D}_k)$}.
\]
This is immediately derived  from Equation \eqref{eq:thm:RV-D2} because the magnitude of the centering term is bounded by $m_n/a_{nm_n}\to 0$. Note that 
the condition  $a_{nm_n}/m_n\to\infty $ holds as soon as $\alpha>1$ and $m_n=o(n^p)$ for some $p<1/(\alpha-1)$.
\end{remark}

\subsection{Reinsurance of the largest claims}\label{sec:app2}
We provide in this section  an application of the preceding results to risk theory with the study of a reinsurance model. The reader may find an exhaustive review of risk theory and the mathematical issues that it raises in \cite{Assmussen2010} and \cite{Mikosch}. We focus on a reinsurance treaty of extreme value type called the \textit{largest claims reinsurance}. Assume that at the time the contract is underwritten, say $t=0$, the reinsurance company guarantees that it will cover the $k$ largest claims over the period $[0,T]$. For a risk process of the form \eqref{eq:risk_process}, the contract covers the risk
\[
R_k^+=\sum_{i=1}^k X_{N+1-i:N},
\]
where $X_{1:N}\leq \cdots\leq X_{N:N}$ denote the order statistics of $X_1,\ldots,X_N$. The value $R_k^+$ is known only at time $T$ and, during the contract life, the covered risk  evolves as
\[
R_k^+(t)=\sum_{i=1}^k X_{N(t)+1-i:N(t)}, \quad t\in [0,T],
\]
where $N(t)=\sum_{i=1}^N \mathds{1}_{\{T_i\leq t\}}$ is the number of claims up to time $t$ and $X_{1:N(t)}\leq \cdots\leq X_{N(t):N(t)}$  the order statistics of $X_1,\ldots,X_{N(t)}$. We use here the convention $X_{N(t)+1-i:N(t)}=0$ if $i>N(t)$. The subscriber of the contract needs to assess its residual risk 
\[
R_k^-=\sum_{i=1}^{N-k} X_{i:N}
\]
that is not covered by the reinsurance treaty and its evolution over time is
\[
R_k^-(t)=\sum_{i=1}^{N(t)-k} X_{i:N(t)}, \quad t\in [0,T].
\]

Our results state the regular variation properties of the residual risk $R_k^-$. The following proposition is crucial in our approach. When $k=0$, it states that the sum of independent regularly varying random variables has the same tail behavior as their maximum. For non-ordered random variables, the result has been shown in \cite{tillier2018regular} Proposition 7 under similar moment conditions.
To our best knowledge, the  more general statement with random number of terms $N$ and arbitrary order statistic $k\geq 0$ is new. 

\begin{proposition}\label{prop:reinsurance}
Let  $(X_i)_{i \geq 1}$ be i.i.d.  non-negative random variables with cumulative distribution function $F$, assumed to be regularly varying with index $\alpha>0$. Independently, let $N$ be random variable with values in $\mathbb{N}$. Assume  $\mathbb{E}[N^{k+1}]<\infty$, and, if $\alpha\geq 1$, assume furthermore that $\mathbb{E}[N^p]<\infty$ for some $p>(k+1) \alpha$. Then,  
\begin{align*}
\mathbb{P}\left(  \sum_{i=1}^{N-k}  X_{i:N} >x \right) \sim \mathbb{P}\left( X_{N-k:N}>x\right) \sim \frac{\mathbb{E}[N^{[k+1]}]}{(k+1)!} (1-F(x))^{k+1},\quad \mbox{as $x\to\infty$},
\end{align*} 
with $\mathbb{E}[N^{[k+1]}] =\mathbb{E}[N!/(N-k-1)!] $ the $(k+1)$-th factorial moment of $N$.
\end{proposition}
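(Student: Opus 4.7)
My plan is to prove the two asymptotic equivalences separately. For the middle one, $\mathbb{P}(X_{N-k:N}>x)\sim \frac{\mathbb{E}[N^{[k+1]}]}{(k+1)!}(1-F(x))^{k+1}$, I would condition on $N=n$ and note that $\{X_{n-k:n}>x\}=\{|\{i\le n:X_i>x\}|\geq k+1\}$ is a binomial tail. A union bound over the $\binom{n}{k+1}$ subsets of size $k+1$ yields the clean uniform estimate $\mathbb{P}(X_{n-k:n}>x)\leq \binom{n}{k+1}(1-F(x))^{k+1}$ valid for every $n$, while a direct expansion of the binomial c.d.f. gives the pointwise limit $\mathbb{P}(X_{n-k:n}>x)/(1-F(x))^{k+1}\to\binom{n}{k+1}$ as $x\to\infty$. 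Dominated convergence, with dominating function $\binom{N}{k+1}$ (integrable since $\mathbb{E}\binom{N}{k+1}=\mathbb{E}[N^{[k+1]}]/(k+1)!<\infty$), concludes this step. The asymptotic lower bound for the sum is then immediate from $\sum_{i=1}^{N-k}X_{i:N}\ge X_{N-k:N}$.

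\textbf{Upper bound for the sum.} The upper bound $\mathbb{P}(\sum_{i=1}^{N-k}X_{i:N}>x)\le(1+o(1))\mathbb{P}(X_{N-k:N}>x)$ is the heart of the proof. Fixing $\delta\in(0,1)$, I would split
\[
\mathbb{P}\Big(\sum_{i=1}^{N-k} X_{i:N}>x\Big)\le \mathbb{P}\bigl(X_{N-k:N}>(1-\delta)x\bigr)+\mathbb{P}\Big(\sum_{i=1}^{N-k} X_{i:N}>x,\ X_{N-k:N}\leq (1-\delta)x\Big).
\]
The first term contributes, by the previous step and the $\alpha$-regular variation of $1-F$, an asymptotic amount $(1-\delta)^{-(k+1)\alpha}\cdot\frac{\mathbb{E}[N^{[k+1]}]}{(k+1)!}(1-F(x))^{k+1}$, which matches the target as $\delta\downarrow 0$. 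On the event inside the second (``bulk'') term, every summand satisfies $X_{i:N}\le X_{N-k:N}\le (1-\delta)x$, so $\sum_{i=1}^{N-k}X_{i:N}\le S_{N,x}:=\sum_{i=1}^N X_i\mathds{1}_{\{X_i\leq(1-\delta)x\}}$, reducing the task to proving $\mathbb{P}(S_{N,x}>x)=o((1-F(x))^{k+1})$.

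\textbf{Main obstacle.} The bulk estimate $\mathbb{P}(S_{N,x}>x)=o((1-F(x))^{k+1})$ is the main technical obstacle. A naive convexity bound $(\sum_{i=1}^n Y_i)^p\leq n^{p-1}\sum Y_i^p$ combined with Markov's inequality at an order $p>(k+1)\alpha$ and Karamata's estimate $\mathbb{E}[(X\wedge(1-\delta)x)^p]=O(x^{p-\alpha}(1-F(x)))$ yields only a bound of order $\mathbb{E}[N^p]x^{-\alpha}(1-F(x))$, which is sharp enough in the scalar case $k=0$ (recovering the non-ordered statement from \cite{tillier2018regular}) but not $o((1-F(x))^{k+1})$ for $k\geq 1$, since one factor of $(1-F(x))$ is short. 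To recover the missing powers, I would use a Rosenthal or Fuk--Nagaev inequality for $S_{n,x}$ that separates the variance term $(n\mathbb{E}[Y_1^2])^{p/2}$ from the pure-jump term $n\mathbb{E}[Y_1^p]$ and centers $S_{n,x}$ at $n\mathbb{E}[Y_1]$, the latter being $O(n)$ in the integrable case $\alpha>1$. This is precisely where the strengthened assumption $\mathbb{E}[N^p]<\infty$ for some $p>(k+1)\alpha$ in the regime $\alpha\ge 1$ is used: integrating the resulting Rosenthal bound against the law of $N$ then delivers the required $o((1-F(x))^{k+1})$ rate, and a routine conditional argument on $N$ closes the proof.
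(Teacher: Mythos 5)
Your treatment of the middle equivalence $\mathbb{P}(X_{N-k:N}>x)\sim\frac{\mathbb{E}[N^{[k+1]}]}{(k+1)!}(1-F(x))^{k+1}$ (binomial tail conditionally on $N=n$, union bound as dominating function, dominated convergence) is exactly the paper's Step 1 and is fine, as is the trivial lower bound for the sum. The problem is the ``bulk'' reduction in your upper bound. Having split at $\{X_{N-k:N}\le(1-\delta)x\}$, you bound $\sum_{i=1}^{N-k}X_{i:N}\le S_{N,x}:=\sum_{i=1}^{N}X_i\mathds{1}_{\{X_i\le(1-\delta)x\}}$ and then claim $\mathbb{P}(S_{N,x}>x)=o((1-F(x))^{k+1})$. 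This claim is \emph{false} for every $k\ge 1$: on the event $\{N\ge 2\}\cap\{X_1,X_2\in(x/2,(1-\delta)x]\}$ (nonempty for $\delta<1/2$) one has $S_{N,x}\ge X_1+X_2>x$, so $\mathbb{P}(S_{N,x}>x)\ge c\,(1-F(x))^{2}$ with $c>0$, which is not $o((1-F(x))^{k+1})$ once $k\ge1$. No Rosenthal or Fuk--Nagaev inequality can rescue this, because the estimate you are trying to prove is simply not true of the quantity $S_{N,x}$; and in any case the polynomial term $n\,\mathbb{E}[Y_1^p]/x^p$ in those inequalities is of order $n\,(1-F(x))$ when the truncation level is $(1-\delta)x$ (Karamata), so it can never produce more than one power of $1-F(x)$. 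The defect is that passing to $S_{N,x}$ throws away the one piece of information that makes the result true, namely that the $k$ largest observations are \emph{excluded} from $\sum_{i=1}^{N-k}X_{i:N}$; the truncated i.i.d.\ sum still contains up to $k+1$ values of size comparable to $x$.

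The paper avoids this by peeling off only the single largest summand: it writes $\mathbb{P}(\sum_{i=1}^{N-k}X_{i:N}>x)\le\mathbb{P}(\sum_{i=1}^{N-k-1}X_{i:N}>\varepsilon x)+\mathbb{P}(X_{N-k:N}>(1-\varepsilon)x)$ and shows in its Step 2 that $\mathbb{P}(\sum_{i=1}^{N-k-1}X_{i:N}>x)=o((1-F(x))^{k+1})$. This works because $\sum_{i=1}^{N-k-1}X_{i:N}$ genuinely excludes the top $k+1$ order statistics, so its tail is governed by $X_{N-k-1:N}$, of order $(1-F(x))^{k+2}$; technically the paper conditions on $U_{N-k:N}$, represents the remaining order statistics as a rescaled uniform sample, and applies a Markov inequality of order $p>(k+1)\alpha$ followed by summation over $N$ (this is where $\mathbb{E}[N^p]<\infty$ enters). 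If you want to keep your $\{X_{N-k:N}\le(1-\delta)x\}$ split, you must bound the bulk term by the tail of $\sum_{i=1}^{N-k-1}X_{i:N}$ (which exceeds $\delta x$ on that event), not by the tail of $S_{N,x}$.
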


Interestingly, we have the relationships 
\begin{equation}\label{eq:dist_expl}
X_{N-k:N}=2d(R,\mathbb{D}_k) \quad \mbox{and}\quad \sum_{i=1}^{N-k}  X_{i:N}=d(R,\mathbb{J}_k),
\end{equation}
where $d$ is the Skorokhod metric, $\mathbb{D}_k$ is the cone of c\`ad-l\`ag functions with at most $k$ jumps (see Lemma~\ref{lem:dist-Dk} below) and $\mathbb{J}_k$ is the cone of piecewise constant c\`ad-l\`ag functions with at most $k$ jumps. Proposition~\ref{prop:reinsurance} states that, for large $x$, the events $\{d(R,\mathbb{D}_k)\geq x/2\}$ and $\{d(R,\mathbb{J}_k)\geq x \}$ are asymptotically equivalent (note that one inclusion always holds). As a consequence, we obtain the following corollary.
\begin{corollary}\label{cor:reinsurance}
Assume that the assumptions of Theorem~\ref{theo:RV-D} are satisfied. When $\alpha\geq 1$, assume furthermore that $\mathbb{E}[N^p]<\infty$ for some $p>(k+1)\alpha$. Then, the conclusion of Theorem~\ref{theo:RV-D} holds with $\mathbb{D}\setminus \mathbb{D}_k$ replaced by $\mathbb{D}\setminus\mathbb{J}_k$.
\end{corollary}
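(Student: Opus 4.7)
The strategy is to upgrade the convergence of Theorem~\ref{theo:RV-D}, which lives in $\bbM(\mathbb{D}\setminus\mathbb{D}_k)$, to the strictly stronger space $\bbM(\mathbb{D}\setminus\mathbb{J}_k)$, with $\mu^{\#}_{k+1}$ extended by zero on $\mathbb{D}_k\setminus\mathbb{J}_k$. This is consistent because, by Equation~\eqref{eq:thm:RV-D-mu}, $\mu^{\#}_{k+1}$ charges only pure-jump paths with exactly $k+1$ jumps, which are disjoint from $\mathbb{D}_k$. I would rely on Proposition~\ref{ConvMEF}(ii): for any Borel set $A\subset\mathbb{D}$ bounded away from $\mathbb{J}_k$, say $A\cap\mathbb{J}_k^r=\emptyset$ for some $r>0$, establish the portmanteau sandwich
\begin{equation*}
\mu^{\#}_{k+1}(\mathrm{int}\,A)\ \leq\ \liminf_n n^{k+1}\mathbb{P}(a_n^{-1}R\in A)\ \leq\ \limsup_n n^{k+1}\mathbb{P}(a_n^{-1}R\in A)\ \leq\ \mu^{\#}_{k+1}(\mathrm{cl}\,A).
\end{equation*}

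For the upper bound, split $A=A^{\mathrm{far}}_{r'}\sqcup A^{\mathrm{near}}_{r'}$ with $A^{\mathrm{far}}_{r'}:=A\setminus\mathbb{D}_k^{r'}$ and $A^{\mathrm{near}}_{r'}:=A\cap\mathbb{D}_k^{r'}$. The far part is bounded away from $\mathbb{D}_k$, so Theorem~\ref{theo:RV-D} combined with Proposition~\ref{ConvMEF}(ii) yields $\limsup_n n^{k+1}\mathbb{P}(a_n^{-1}R\in A^{\mathrm{far}}_{r'})\leq \mu^{\#}_{k+1}(\mathrm{cl}\,A)$ for every $r'>0$. For the near part, the identities in Equation~\eqref{eq:dist_expl} applied to $a_n^{-1}R$ (which has jumps $a_n^{-1}X_i$) show $\{a_n^{-1}R\in A^{\mathrm{near}}_{r'}\}\subset\{X_{N-k:N}<2a_nr'\}\cap\{\sum_{i=1}^{N-k}X_{i:N}\geq a_n r\}$, and the trivial bound $\sum_{i=1}^{N-k}X_{i:N}\leq (N-k)X_{N-k:N}$ forces $N>M:=k+r/(2r')$, whence
\begin{equation*}
\mathbb{P}\bigl(a_n^{-1}R\in A^{\mathrm{near}}_{r'}\bigr)\ \leq\ \mathbb{P}\!\left(\sum_{i=1}^{N-k}X_{i:N}\geq a_n r,\ N>M\right).
\end{equation*}

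The core technical step---and the main obstacle---is the refined tail equivalence
\begin{equation*}
\mathbb{P}\!\left(\sum_{i=1}^{N-k}X_{i:N}\geq x,\ N>M\right)\ \sim\ \frac{\mathbb{E}\bigl[N^{[k+1]}\mathds{1}_{\{N>M\}}\bigr]}{(k+1)!}\,(1-F(x))^{k+1}\qquad\text{as }x\to\infty,
\end{equation*}
which I would derive by applying Proposition~\ref{prop:reinsurance} both to $N$ and to the truncation $N\wedge M$ (independent of the $X_i$'s), then subtracting; the elementary identity $\mathbb{E}[(N\wedge M)^{[k+1]}]=\mathbb{E}[N^{[k+1]}\mathds{1}_{\{N\leq M\}}]+M^{[k+1]}\mathbb{P}(N>M)$ produces exactly $\mathbb{E}[N^{[k+1]}\mathds{1}_{\{N>M\}}]$ after cancellations. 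The moment hypothesis of the corollary is precisely what makes Proposition~\ref{prop:reinsurance} applicable at both steps (in particular, $\mathbb{E}[N^{k+1}]<\infty$ is part of the hypotheses of Theorem~\ref{theo:RV-D}). Taking $x=a_n r$, regular variation gives $n^{k+1}(1-F(a_n r))^{k+1}\to r^{-(k+1)\alpha}$, and as $r'\downarrow 0$, i.e.\ $M\to\infty$, dominated convergence yields $\mathbb{E}[N^{[k+1]}\mathds{1}_{\{N>M\}}]\to 0$, so $\lim_{r'\downarrow 0}\limsup_n n^{k+1}\mathbb{P}(a_n^{-1}R\in A^{\mathrm{near}}_{r'})=0$, which closes the upper bound.

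For the lower bound, $B_{r'}:=\mathrm{int}\,A\cap\{f:d(f,\mathbb{D}_k)>r'\}$ is open and bounded away from $\mathbb{D}_k$, so Theorem~\ref{theo:RV-D} and Proposition~\ref{ConvMEF}(ii) give $\liminf_n n^{k+1}\mathbb{P}(a_n^{-1}R\in\mathrm{int}\,A)\geq\mu^{\#}_{k+1}(B_{r'})$. As $r'\downarrow 0$, $B_{r'}$ increases to $\mathrm{int}\,A\cap(\mathbb{D}\setminus\mathbb{D}_k)$, and since $\mu^{\#}_{k+1}$ is supported on paths with exactly $k+1$ jumps (disjoint from $\mathbb{D}_k$), monotone convergence yields $\mu^{\#}_{k+1}(B_{r'})\uparrow\mu^{\#}_{k+1}(\mathrm{int}\,A)$, completing the portmanteau inequalities and hence proving the corollary.
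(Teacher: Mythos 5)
Your argument is correct, but it follows a genuinely different route from the paper's. The paper works with test functions (Proposition~\ref{ConvMEF}(i)): it observes that $\{d(a_n^{-1}R,\mathbb{D}_k)>\varepsilon/2\}\subset\{d(a_n^{-1}R,\mathbb{J}_k)>\varepsilon\}$ and that, by Proposition~\ref{prop:reinsurance} and Equation~\eqref{eq:dist_expl}, both events have the \emph{same} limit $\tfrac{1}{(k+1)!}\mathbb{E}[N^{[k+1]}]\varepsilon^{-(k+1)\alpha}$ after scaling by $n^{k+1}$; hence the difference event $\{d(a_n^{-1}R,\mathbb{J}_k)>\varepsilon,\ d(a_n^{-1}R,\mathbb{D}_k)\leq\varepsilon/2\}$ is negligible in one line, and the expectation of a test function supported away from $\mathbb{J}_k$ reduces to the $\mathbb{D}_k$-far regime covered by Theorem~\ref{theo:RV-D}. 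You instead run the portmanteau sandwich of Proposition~\ref{ConvMEF}(ii) on Borel sets, split relative to $\mathbb{D}_k^{r'}$, and control the near part by a refined version of Proposition~\ref{prop:reinsurance} localized on $\{N>M\}$, obtained by truncating $N$ at $M$ and subtracting; the observation that the near event forces $N>k+r/(2r')$ is the right mechanism, and your subtraction is legitimate because each term, divided by $(1-F(x))^{k+1}$, has a limit (so read your ``$\sim$'' as a statement about such ratios rather than genuine equivalence when the constant could vanish). The trade-off: the paper's proof is shorter because it exploits the coincidence of the two constants in Proposition~\ref{prop:reinsurance} wholesale, needing only one fixed $\varepsilon$; yours is more quantitative (it isolates exactly which values of $N$ can produce a path far from $\mathbb{J}_k$ yet close to $\mathbb{D}_k$) at the cost of the extra limit $r'\downarrow 0$ and the truncation bookkeeping $\mathbb{E}[(N\wedge M)^{[k+1]}]=\mathbb{E}[N^{[k+1]}\mathds{1}_{\{N\leq M\}}]+M^{[k+1]}\mathbb{P}(N>M)$, which you verify correctly. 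Both arguments use the same two external inputs, namely Theorem~\ref{theo:RV-D} off $\mathbb{D}_k$ and Proposition~\ref{prop:reinsurance} via the distance formulas~\eqref{eq:dist_expl}.
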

 In terms of successive hidden regular variations, it is sensible to work with $\mathbb{J}_k$ instead of $\mathbb{D}_k$ because the support of $\mu^{\#}_{k}$ is exactly $\mathbb{J}_k$. From a technical point of view, the results are stronger and more difficult to establish in $\mathbb{D}\setminus\mathbb{J}_k$ because the distance $d(R,\mathbb{J}_k)$ involves a sum of order statistics so that extra integrability conditions are required to control the sum.
 
 \smallskip
 Going back to our original problem of largest claims in reinsurance, we deduce from Corollary~\ref{cor:reinsurance} the following results.
\begin{proposition}\label{prop:reinsurance2}
Assume that the assumptions of Theorem~\ref{theo:RV-D} are satisfied and note $F(x)=\nu([x,\infty))$, $x\geq 0$. When $\alpha\geq 1$, assume furthermore that $\mathbb{E}[N^p]<\infty$ for some $p>(k+1)\alpha$.  Then:
\begin{enumerate} 
\item[i)] (regular variation)  The residual risk satisfies
\[
\mathbb{P}\left(  R_k^- >x \right) \sim  \frac{\mathbb{E}[N^{[k+1]}]}{(k+1)!}(1-F(x))^{k+1},\quad \mbox{as $x\to\infty$}.
\] 
\item[ii)] (conditional limit theorem) The   typical behavior of the risk process given a large residual risk is given by
\[
\mathbb{P}\left(x^{-1}R\in \cdot \mid R_k^->x\right)\stackrel{d}{\longrightarrow} \mathbb{P}\left( \left(\sum_{i=1}^{k+1}Z_i\mathds{1}_{\{S_i\leq t\}}\right)_{t\in[0,T]}\in \,\cdot\,\right),\quad \mbox{as $x\to\infty$},
\]
where  $\stackrel{d}{\longrightarrow}$ stands for weak convergence in Skorokhod space $\mathbb{D}$, $Z_1,\ldots,Z_{k+1}$ are  i.i.d.  with standard $\alpha$-Pareto distribution  and, independently, $(S_1,\ldots,S_{k+1})$ has distribution $M_{k+1}(\rmd s_1,\ldots,\rmd s_{k+1})/M_{k+1}([0,T]^{k+1})$.
\item[iii)] (residual risk monitoring) Assume that $M_{k+2}$ is finite and non-null and that both $M_{k+1}$ and $M_{k+2}$ are continuous measures. If $\alpha\geq 1$, assume furthermore that $\mathbb{E}[N^p]<\infty$ for some $p>(k+2)\alpha$. Then, for all $0<t_0<t_1<T$ and $u>1$, 
\begin{align}
&\lim_{\varepsilon \to 0}\lim_{x\to\infty}  (1-F(x))^{-1} \mathbb{P}\left(  R_k^-(t_1) > ux \mid x <R_k^-(t_0)<(1+\varepsilon)x \right)\nonumber\\
&=\frac{M_{k+2}\left([0,t_0]^{k+1}\times (t_0,t_1] \right)}{M_{k+1}\left([0,t_0]^{k+1}\right)}\left( (u-1)^{-(k+1)\alpha}+\left((u-1)^{-\alpha}-1\right)_+\right).\label{eq:item3}
\end{align}
\end{enumerate}
\end{proposition}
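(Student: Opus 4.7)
The proposition splits into three essentially independent claims, so I would prove them in order.

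For Part (i), the plan is to apply Proposition~\ref{prop:reinsurance} directly. Conditional on the base point process $\Psi$, the number of claims $N$ is determined and the sizes $X_1,\ldots,X_N$ are i.i.d.\ with regularly varying distribution $\nu$, so the hypotheses of Proposition~\ref{prop:reinsurance} are met once we verify the moment conditions: $\mathbb{E}[N^{[k+1]}]=M_{k+1}([0,T]^{k+1})<\infty$ by assumption, and the stronger condition $\mathbb{E}[N^p]<\infty$ for $p>(k+1)\alpha$ when $\alpha\geq 1$ is exactly what the statement provides.

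For Part (ii), the plan is to combine Corollary~\ref{cor:reinsurance} with the conditional limit theorem of Proposition~\ref{prop:cond_limit_thm}. Using the identity $d(R,\mathbb{J}_k)=R_k^-$ from Equation~\eqref{eq:dist_expl}, the conditioning event $\{R_k^->x\}$ is exactly $\{R\in xA\}$ for the cone-complement set $A=\{g\in\mathbb{D}: d(g,\mathbb{J}_k)>1\}$, which is bounded away from $\mathbb{J}_k$. I would check that $\mu^{\#}_{k+1}(A)>0$ and that its boundary has measure zero (using continuity of $M_{k+1}$). Since $\mu^{\#}_{k+1}$ is supported on pure jump functions with exactly $k+1$ jumps, for such $g=\sum_{i=1}^{k+1}x_i\mathds{1}_{\{t_i\leq\cdot\}}$ the distance $d(g,\mathbb{J}_k)$ equals $\min_i x_i$; thus $A$ translates to the Pareto tail constraint $\min_i x_i>1$. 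Normalising $\mu^{\#}_{k+1}(A\cap\cdot)/\mu^{\#}_{k+1}(A)$ then produces exactly the claimed law: $(S_1,\ldots,S_{k+1})$ distributed as $M_{k+1}/M_{k+1}([0,T]^{k+1})$, independent of i.i.d.\ $\alpha$-Pareto jump sizes.

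For Part (iii), the plan is to apply Corollary~\ref{cor:reinsurance} at two different orders and take a ratio. For the denominator, Part (i) gives the exact equivalent. For the numerator, I would consider the event $E_{\varepsilon,u}=\{g:g_k^-(t_0)\in(1,1+\varepsilon),\ g_k^-(t_1)>u\}$ in $\mathbb{D}$; it is bounded away from $\mathbb{J}_{k+1}$ (verify by bounding from below the sum of the $N-k-1$ smallest jumps on the event), so I may apply Corollary~\ref{cor:reinsurance} with $k$ replaced by $k+1$---this is where the extra moment assumption $\mathbb{E}[N^p]<\infty$ for $p>(k+2)\alpha$ enters. This yields $\mathbb{P}(a_n^{-1}R\in E_{\varepsilon,u})\sim n^{-(k+2)}\mu^{\#}_{k+2}(E_{\varepsilon,u})$. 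I then enumerate the point configurations in the support of $\mu^{\#}_{k+2}$: writing a generic $g$ as a sum of $k+2$ jumps at times $(t_1,\ldots,t_{k+2})$ and sizes $(x_1,\ldots,x_{k+2})$, the event requires at least $k+1$ jumps in $[0,t_0]$ (for $g_k^-(t_0)>0$) and the inequality $g_k^-(t_1)>u>g_k^-(t_0)$ forces \emph{strict} increase between $t_0$ and $t_1$, ruling out the configurations with all $k+2$ jumps in $[0,t_0]$ or with the extra jump in $(t_1,T]$; thus only the configuration with $k+1$ jumps in $[0,t_0]$ and one in $(t_0,t_1]$ contributes. On this configuration, the symmetry factors combine with $1/(k+2)!$ to give a clean expression, and the integral factorises as $M_{k+2}([0,t_0]^{k+1}\times(t_0,t_1])$ times an integral against the product Pareto measure $\bigotimes\mu(\rmd x_i)$, with the inner integral of order $\varepsilon$ as $\varepsilon\to 0$.

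The main obstacle will be the case analysis for the size integral. Writing $x_1=m\in(1,1+\varepsilon)$ for the minimum of the $k+1$ old sizes (with the remaining $k$ in $(m,\infty)$) and $x_0\in(0,\infty)$ for the new size, one uses the representation $R_k^-(t_1)-R_k^-(t_0)=\min(x_0,x_{(2)})$ where $x_{(2)}$ is the second smallest old jump, and splits the integral according to whether $x_0<1$ or $x_0\geq 1$, yielding contributions $((u-1)^{-\alpha}-1)_+$ and $(u-1)^{-(k+1)\alpha}$ respectively. Comparing with the denominator, for which $\mu^{\#}_{k+1}(\{g_k^-(t_0)\in(1,1+\varepsilon)\})\sim M_{k+1}([0,t_0]^{k+1})(k+1)\alpha\varepsilon/(k+1)!$ by an analogous computation, the $\varepsilon$ and combinatorial factors cancel and the prefactor $(1-F(x))^{-1}$ absorbs one power of $(1-F(x))$ from the ratio, producing the announced expression. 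Continuity of $M_{k+1}$ and $M_{k+2}$ ensures the relevant boundaries have zero measure so that the convergences from Corollary~\ref{cor:reinsurance} apply to these specific events.
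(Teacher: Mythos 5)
Your proposal follows the paper's proof essentially step for step: part (i) is a direct application of Proposition~\ref{prop:reinsurance}; part (ii) combines Corollary~\ref{cor:reinsurance} with Proposition~\ref{prop:cond_limit_thm} via the identification $R_k^-=d(R,\mathbb{J}_k)$ and the set $A=\{g:d(g,\mathbb{J}_k)>1\}$; and part (iii) takes the ratio of two applications of Corollary~\ref{cor:reinsurance} at orders $k$ and $k+1$, isolates the configuration with $k+1$ jumps in $[0,t_0]$ and one in $(t_0,t_1]$, and splits the size integral according to whether the new jump exceeds the threshold $1$, exactly as in the paper's computation of $I_1(\varepsilon)$ and $I_2(\varepsilon)$ (your parametrization via $R_k^-(t_1)-R_k^-(t_0)=\min(x_0,x_{(2)})$ is an equivalent reformulation of the paper's condition $z_{1:k+2}+z_{2:k+2}>u$). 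The approach and the resulting constants agree with the paper's argument.
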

The third item allows to monitor the residual risk during the contract lifetime and assesses the risk of a larger loss  at time $t_1>t_0$ given that the loss is approximately $x$ at time $t_0$. The regular variation of the  conditional  probability with respect to the null event $R_k^-(t_0)=x$ does not follow from Corollary~\ref{cor:reinsurance} explaining why we introduce the approximate conditioning with $\varepsilon\to 0$.

\begin{example}\label{examplePP} In the simple case of a compound Poisson process when the claims occurs according  an homogeneous Poisson point process  with intensity $\tau^{-1}>0$, the results from Proposition~\ref{prop:reinsurance2} simplify as follows. The residual risk is regularly varying such that 
\[
\mathbb{P}\left(  R_k^- >x \right) \sim  \frac{\tau^{-(k+1)}T^{k+1}}{(k+1)!}(1-F(x))^{k+1}\quad \mbox{as $x\to\infty$}.
\] 
The limiting conditional risk process in item $ii)$ corresponds to $k+1$ independent claim arrivals with  occurence times $S_1,\ldots,S_{k+1}$ uniform on $[0,T]$ and  independent magnitudes $Z_1,\ldots,Z_{k+1}$ with standard $\alpha$-Pareto distribution. Finally, the limit in Equation~\eqref{eq:item3} equals\[
\frac{(t_1-t_0)}\tau\left( (u-1)^{-(k+1)\alpha}+\left((u-1)^{-\alpha}-1\right)_+\right).
\]
More details on the calculations of the factorial moment measures are given in the following example.
\end{example}

\begin{example} Continuation of Example~\ref{ex:renewal}.  In the case of a renewal/reward process on $[0,T]$ when the claims occur according to a stationary renewal point process with mean inter-arrival time $\tau>0$,  Proposition~\ref{prop:reinsurance2} entails the following results. The residual risk is regularly varying such that 
\[
\mathbb{P}\left(  R_k^- >x \right) \sim  m_{k+1}(1-F(x))^{k+1},\quad \mbox{as $x\to\infty$},
\]
with $m_{k+1}$ the factorial moment of $N(T)$ of order $k+1$ given by
\[
 m_{k+1}=\int_{[0,T]^{k+1}} \tau^{-1}u(t_{(2)}-t_{(1)})\ldots u(t_{(k+1)}-t_{(k)})\rmd t_1 \rmd t_2\cdots \rmd t_{k+1}.
\] 
Then the limit distribution of the conditional risk process  arising in point $ii)$ is the distribution of $\sum_{i=1}^{k+1}Z_i\mathds{1}_{\{S_i\leq t\}}$, $t\in [0,T]$, where  $(S_1,\ldots,S_{k+1})$ has density 
\[
m_{k+1}^{-1} \tau^{-1}u(s_{(2)}-s_{(1)})\ldots u(s_{(k+1)}-s_{(k)})\mathds{1}_{[0,T]^{k+1}}(s)
\]
and, independently, $Z_1,\ldots,Z_{k+1}$ are i.i.d. with standard $\alpha$-Pareto distribution.
Finally, the residual risk monitoring Equation~\eqref{eq:item3} holds with 
\begin{align*}
M_{k+1}\left([0,t_0]^{k+1}\right)&=\int_{[0,t_0]^{k+1}} \tau^{-1}u(s_{(2)}-s_{(1)})\cdots u(s_{(k+1)}-s_{(k)})\rmd s_1\ldots\rmd s_{k+1},\\
M_{k+2}\left([0,t_0]^{k+1}\times (t_0,t_1]\right)&=\int_{[0,t_0]^{k+1}\times (t_0,t_1]} \tau^{-1}u(s_{(2)}-s_{(1)})\cdots u(s_{(k+2)}-s_{(k+1)})\rmd s_1\ldots\rmd s_{k+2}.
\end{align*}
In the explicit case $g(t)=te^{-t}\mathds{1}_{[0,\infty)}(t)$  mentioned in Example~\ref{ex:renewal},  $u(t)=(1-e^{-2t})/2\mathds{1}_{[0,\infty)}(t)$  and we compute
\[
m_2=\frac{1}{4}T^2-\frac{1}{8}(2T-1+e^{-2T}).
\]
Furthermore, for $k=1$,  Equation~\eqref{eq:item3} holds with 
\begin{align*}
M_{2}\left([0,t_0]^{2}\right)&=\frac{1}{4}t_0^2-\frac{1}{8}(e^{-2t_0}-1+2t_0),\\
M_{3}\left([0,t_0]^{2}\times (t_0,t_1]\right)&=\frac{1}{8}t_0^2(t_1-t_0)- \frac{1}{16}(t_1-t_0)(e^{-2t_0}-1+2t_0)\\
& +\frac{1}{16}{\left( e^{-2(t_1-t_0)}-1 \right)\left( t_0e^{-2t_0} +e^{-2t_0}+ t_0-1 \right)} .
\end{align*}
Letting $t_0\to 0^+$ in the limit of Equation~\eqref{eq:item3} we obtain the expression 
\[
\frac14\left(e^{-2t_1}+2t_1-1\right)\left( (u-1)^{-(k+1)\alpha}+\left((u-1)^{-\alpha}-1\right)_+\right)\,,\qquad t_1>0\,.
\]
It should be compared with the expression obtained in Example~\ref{examplePP} for   the Poisson process with mean inter-arrival $\tau=2$, letting $t_0\to 0^+$,
\[
\frac{t_1}2\left( (u-1)^{-(k+1)\alpha}+\left((u-1)^{-\alpha}-1\right)_+\right)\,,\qquad t_1>0.
\]
The lower residual risk monitoring for the Gamma distributed inter-arrivals renewal process may be seen as a consequence of the repulsive effect.
\end{example}

\section{Proofs related to Section~\ref{sec:RVPP}}\label{sec:proof2}
\subsection{Proof of Theorem~\ref{thm:RV-PP1}}
For the proof of Theorem \ref{thm:RV-PP1}, we need the following lemma that characterizes the convergence of the tensor product of measures in $\bbM(E\setminus F)$. We recall that $E=\cT \times \cX$ and $F=\cT \times \{ 0\}$.
\begin{lemma}\label{lem:cv_product}
	Assume $\nu_n\to \mu$ in $\bbM(\cX\setminus\{0\})$ and $\lambda\in\bbM_b(\cT)$. Then for $k\geq 1$, we have
	\[
	\left( \lambda\otimes \nu_n\right)^{\otimes k}\to \left( \lambda\otimes \mu \right)^{\otimes k} \quad \mbox{ in $\bbM(E^k\setminus \cup_{i=1}^k(E^{i-1}\times F\times E^{k-i}))$.}
	\]
\end{lemma}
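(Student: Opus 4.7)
The strategy is to reduce the claim to weak convergence of restrictions, via Proposition~\ref{ConvMEF}(iii). Set $G_k:=\cup_{i=1}^k E^{i-1}\times F\times E^{k-i}$ and equip $E^k$ with the product metric induced by $d_E$. For $z_i=(t_i,x_i)\in E$, the distance from $(z_1,\ldots,z_k)$ to $G_k$ equals $\min_i d_\cX(x_i,0)$; consequently, for every $r>0$,
\[
E^k\setminus G_k^r=(E\setminus F^r)^k,
\]
and the restriction of $(\lambda\otimes\nu_n)^{\otimes k}$ to this set is exactly $(\lambda\otimes \nu_n^r)^{\otimes k}$, where $\nu_n^r$ denotes the restriction of $\nu_n$ to $\cX\setminus B_{0,r}^\cX$.

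By Proposition~\ref{ConvMEF}(iii) applied to the hypothesis $\nu_n\to\mu$ in $\bbM(\cX\setminus\{0\})$, there exists a sequence $r_i\downarrow 0$ such that $\nu_n^{r_i}\to\mu^{r_i}$ weakly in $\bbM_b(\cX\setminus B_{0,r_i}^\cX)$ as $n\to\infty$; in particular the total masses converge to the finite limit $\mu^{r_i}(\cX\setminus B_{0,r_i}^\cX)$. The heart of the proof is then to establish, for each $i\geq 1$,
\[
(\lambda\otimes \nu_n^{r_i})^{\otimes k}\longrightarrow (\lambda\otimes \mu^{r_i})^{\otimes k}\quad\mbox{weakly in $\bbM_b((E\setminus F^{r_i})^k)$.}
\]
This is the classical fact that weak convergence of finite Borel measures is preserved under tensor products. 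For the one-step version $\lambda\otimes\nu_n^{r_i}\to\lambda\otimes\mu^{r_i}$, I would proceed by Fubini and dominated convergence: given $f\in\mathcal{C}_b(E\setminus F^{r_i})$, write $\int f\,\rmd(\lambda\otimes\nu_n^{r_i})=\int_\cT g_n(t)\,\lambda(\rmd t)$ with $g_n(t):=\int f(t,x)\,\nu_n^{r_i}(\rmd x)$; the $g_n$ converge pointwise thanks to the weak convergence of $\nu_n^{r_i}$ and are uniformly bounded by $\|f\|_\infty\sup_n \nu_n^{r_i}(\cX\setminus B_{0,r_i}^\cX)<\infty$, while $\lambda$ is finite.

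Invoking Proposition~\ref{ConvMEF}(iii) in the reverse direction, for the space $E^k\setminus G_k$ with the same sequence $r_i\downarrow 0$, then translates these weak convergences of the restrictions into the claimed convergence in $\bbM(E^k\setminus G_k)$. The main obstacle is the iterative upgrade from the one-step tensor product to the $k$-fold version: the Fubini argument above does not directly iterate since the measures vary in every factor, so the cleanest path is either an approximation of a general $f\in\mathcal{C}_b((E\setminus F^{r_i})^k)$ by finite sums of product functions, or a careful induction using the uniform mass control on $\nu_n^{r_i}$ at each step.
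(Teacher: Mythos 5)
Your argument is correct, and at its core the one-factor step is the same as the paper's: both write $\int f\,\rmd(\lambda\otimes\nu_n)$ as an iterated integral via Fubini, use the convergence of $\nu_n$ for each fixed $t$, and close with dominated convergence using a uniform bound on $\nu_n(B_{0,r}^c)$. Where you genuinely diverge is in the passage to the $k$-fold product. The paper establishes $\lambda\otimes\nu_n\to\lambda\otimes\mu$ in $\bbM(E\setminus F)$ by testing against functions supported away from $F$ and then invokes Lemma~2.2 of \cite{RBZ19}, a ready-made stability result for $M$-convergence under tensor products, to get the $k$-fold statement in one stroke. You instead reduce the whole problem to weak convergence of restrictions via Proposition~\ref{ConvMEF}(iii), using the identity $E^k\setminus G_k^{r}=(E\setminus F^{r})^{k}$ (which is correct: with either the sum or max product metric the distance to the $i$-th slab $E^{i-1}\times F\times E^{k-i}$ is $d_\cX(x_i,0)$, so the distance to $G_k$ is the minimum), and then appeal to the classical fact that weak convergence of finite measures on separable metric spaces is preserved under finite tensor products. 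That fact is standard (Billingsley's product theorem, extended from probability to finite measures using the convergence of total masses you record), so what you label the ``main obstacle'' is not a gap but a step to be cited or spelled out --- exactly as the paper cites \cite{RBZ19} for the analogous step. Your route buys a reduction to purely classical weak-convergence tools and makes the role of the product structure of the removed cone transparent; the paper's route is shorter given access to the $M$-convergence literature.
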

\begin{proof}[Proof of Lemma \ref{lem:cv_product} ]
	Let $f:E\to\mathbb{R}$ be a continuous bounded function vanishing on $F^r=\cT\times B_{0,r}$ for some $r>0$. Possibly replacing $r>0$ by a smaller value, we can assume $\nu_n(B_{0,r}^c)\to \mu(B_{0,r}^c)$ (see Theorem 2.2 (i) in \cite{LR06} that asserts that this convergence holds for all but countably many $r>0$). By Fubini-Tonelli theorem,
	\[
	\int_{E\setminus F} f(t,x)\lambda(\rmd t)\nu_n(\rmd x)=\int_\cT \left(\int_{\cX\setminus\{0\}} f(t,x)\nu_n(\rmd x)\right)\lambda(\rmd t).
	\]
	For all $t\in\cT$, the function $x\mapsto f(t,x)$ is bounded continuous and vanishes on $B_{0,r}$ so that the convergence $\nu_n\to\mu$ in $\bbM(\cX\setminus\{0\})$ implies from Proposition \ref{ConvMEF} that
	\[
	\int_{\cX\setminus\{0\}} f(t,x)\nu_n(\rmd x)\to \int_{\cX\setminus\{0\}} f(t,x)\mu(\rmd x) \quad \mbox{as $n\to \infty$}.
	\]
	Furthermore, since $f$ is bounded, say by $M>0$, and since $\nu_n(B_{0,r}^c)\to \mu(B_{0,r}^c)$ is also bounded, say by $L>0$, we have
	\[
	\left|\int_{\cX\setminus\{0\}} f(t,x)\nu_n(\rmd x)\right|\leq M\nu_n(B_{0,r}^c)\leq ML,\quad t\in\cT.
	\]
	Lebesgue convergence theorem finally entails
	\[
	\int_{E\setminus F} f(t,x)\lambda(\rmd t)\nu_n(\rmd x)\to 
	\int_{E\setminus F} f(t,x)\lambda(\rmd t)\mu(\rmd x),
	\]
	proving the convergence $\lambda\otimes \nu_n\to \lambda\otimes \mu$  in $\bbM(E\setminus F)$. A direct application of  \citet[Lemma 2.2]{RBZ19} ensures that, for $ k\geq 1$,
%$$\left( \lambda \otimes \nu_n \right)^{\otimes k} \longrightarrow \left( \lambda \otimes \mu \right)^{\otimes k} \ \ \ in  \ \mathbb{M}\left( E^k \setminus \bigcup_{i=1}^k \left( E^{k-1} \times F\right) \right).$$
\begin{align}\label{Tensor}
\left( \lambda \otimes \nu_n \right)^{\otimes k} \longrightarrow \left( \lambda \otimes \mu \right)^{\otimes k} \ \ \ in  \ \mathbb{M}\left( E^k\setminus \cup_{i=1}^k(E^{i-1}\times F\times E^{k-I}\right).
\end{align}	
\end{proof}

We are now ready to prove Theorem \ref{thm:RV-PP1}. For the sake of clarity, we begin with the proof of regular variation in the case $k=0$, before considering hidden regular variation in the case $k\geq 1$.

\begin{proof}[Proof of Theorem \ref{thm:RV-PP1}, case $k=0$] 	The proof uses similar arguments as in Theorem 3.3 in Dombry et al.\ (2018). We have to show that  
	\begin{align*}
	n \bbP (a_n^{-1}\Pi \in \cdot) \longrightarrow   \mu^\ast_1(\cdot) \ \ \ \text{in} \  \mathbb{M}( \mathcal{N}\setminus\{0\}).
	\end{align*}
	According to Theorem~\ref{theo:cv-N-Nk} Equation \eqref{Laplacek0}, the convergence holds if and only if
	\begin{align}\label{eqTh:conv}
	 \lim_{n \to \infty} n \bbE \left[ 1-\rme^{-(a_n^{-1}\Pi)(f)}\right]=\int_{\mathcal{N}}\left(1-\rme^{-\pi(f)}\right)\mu^\ast_1(\rmd \pi)
	\end{align}
for all $f:E\to [0,\infty)$ bounded continuous with support bounded away from $F$. 	By definition of $\mu^\ast_1$,   
		\[
	\int_{\mathcal{N}}\left(1-\rme^{-\pi(f)}\right)\mu^\ast_1(\rmd \pi)=\int_{ \cT\times\cX}\left(1-\rme^{-f(t,x)}\right) \lambda(dt) \mu(\rmd x).
	\]
	Besides, since $\Pi$ is a Poisson point process,
	 	\begin{align*}
	  n \bbE \left[ 1-\rme^{-(a_n^{-1}\Pi)(f)}\right] & =  n  \left( 1-  \bbE\left[ \rme^{-\int_{E} f(a_n^{-1}z)\Pi(\rmd z)}\right] \right)  \\
	   & = n \left(  1- \exp \left[ \int_{\cT\times\cX} \left( e^{-f(t,a_n^{-1}x)}-1 \right)   \lambda(dt) \nu (dx)\right] \right)\\
	  & = n \left(  1- \exp \left[ -\frac{1}{n} \int_{\cT\times\cX} \left(1- e^{-f(t,y)} \right)  \lambda(dt) \nu_n (dy)\right] \right)
	 \end{align*}
	with $\nu_n=n \nu(a_n \cdot )$. As the function $1 -e^{-f}$ is bounded continuous   with support bounded away from $F$ and as $\lambda\otimes \nu_n  \to \lambda\otimes\mu$ in $\bbM(E\setminus F)$ as $n \to \infty$ (see Lemma~\ref{lem:cv_product}), we have 
	\[
	\int_{\cT\times\cX} \left(1- e^{-f(t,y)} \right)  \lambda(dt) \nu_n (dy)\longrightarrow \int_{\cT\times\cX} \left(1- e^{-f(t,y)} \right)  \lambda(dt) \mu (dy),
\quad \mbox{as $n\to\infty$},
	\]
	and hence
	 \begin{align*}
	  n \bbE \left[ 1-\rme^{-\int_{E\setminus F} f(a_n^{-1}z)\Pi(\rmd z)}\right] & =  n \left(  1- \exp \left[ -\frac{1}{n} \int_{\cT\times\cX} \left(1- e^{-f(t,y)} \right)  \lambda(dt) \nu_n (dy)\right] \right)\\
	 &\longrightarrow\int_{ \cT\times\cX}\left(1-\rme^{-f(t,x)}\right) \lambda(dt) \mu(\rmd x).
	 \end{align*}
This proves Equation~\eqref{eqTh:conv} and concludes the proof of Theorem \ref{thm:RV-PP1}, case $k=0$.
\end{proof}

\begin{proof}[Proof of Theorem \ref{thm:RV-PP1}, case $k\geq1$] 	
Let $r>0$ be fixed such that $\mu(\partial B_{0,r}^c)=0$  and let $f:  \cT\times \cX \mapsto [0,\infty)$ be a bounded continuous function vanishing on $F^r=\cT\times B_{0,r}$. For $k\geq 1$, we will prove that
\begin{align}
&n^{k+1}\bbE\Big[\rme^{-\sum_{i=1}^Nf(T_i,a_n^{-1}X_i)} \mathds{1}_{\{\sum_{i=1}^N\varepsilon_{a_n^{-1}X_i}(B_{0,r}^c)\ge k+1\}}\Big]\nonumber\\
&\longrightarrow \frac1{(k+1)!}\int_{E^{k+1}} \rme^{-\sum_{i=1}^{k+1}f(t_i, x_i)} \otimes_{i=1}^{k+1}\mathds{1}_{\{x_i\in  B_{0,r}^c\}} \lambda(\rmd t_i)\mu(\rmd x_i) \nonumber\\
&=\int_{\mathcal{N}} e^{-\pi(f)} \mathds{1}_{\{\pi(B_{0,r}^c)\geq k+1\}}\mu^*_{k+1}(\rmd \pi)   \,.\label{eq:HRV-PPP-proof1}
\end{align}
In view of Theorem \ref{theo:cv-N-Nk} (iii), this implies the convergence~\eqref{thm:PiRV1}.\\
Since $\sum_{i=1}^N\varepsilon_{X_i}$ is a Poisson random measure with intensity $\lambda(\cT)\nu(\rmd x)$, we have
\begin{align}\label{eq1:th31}
&n^{k+1}\bbP\Big(\sum_{i=1}^N\varepsilon_{a_n^{-1}X_i}(B_{0,r}^c)\ge k+1\Big) \nonumber\\
&=n^{k+1}\rme^{-\lambda(\cT)\nu(a_nB_{0,r}^c)}\sum_{j=k+1}^\infty \dfrac{(\lambda(\cT)\nu(a_nB_{0,r}^c))^j}{j!} \nonumber \\
&=\frac{(n\lambda(\cT)\nu(a_nB_{0,r}^c))^{k+1}}{(k+1)!} \rme^{-\lambda(\cT)\nu(a_nB_{0,r}^c)}\sum_{j=0}^\infty \dfrac{(k+1)!}{(k+1+j)!}(\lambda(\cT)\nu(a_nB_{0,r}^c))^j
\end{align}
and 
\begin{align}\label{eq1:th32}
n^{k+1}\bbP\Big(\sum_{i=1}^N\varepsilon_{a_n^{-1}X_i}(B_{0,r}^c)= k+1\Big)=\frac{(n\lambda(\cT)\nu(a_nB_{0,r}^c))^{k+1}}{(k+1)!} \rme^{-\lambda(\cT)\nu(a_nB_{0,r}^c)}\,.
\end{align}
Regular variation $\nu\in\mathrm{RV}(\cX\setminus\{0\},\{a_n\},\mu)$ implies $n\nu(a_nB_{0,r}^c)\to \mu(B_{0,r}^c)>0$  as $n\to\infty$. Then, combining Equations \eqref{eq1:th31} and \eqref{eq1:th32}, we deduce
\begin{align*}
n^{k+1}\bbP\Big(\sum_{i=1}^N\varepsilon_{a_n^{-1}X_i}(B_{0,r}^c)\ge k+1\Big) \sim n^{k+1}\bbP\Big(\sum_{i=1}^N\varepsilon_{a_n^{-1}X_i}(B_{0,r}^c)= k+1\Big)\to\frac{c^{k+1}}{(k+1)!} 
\end{align*}
with $c=\lambda(\cT)\mu(B_{0,r}^c)$. As a consequence, the left-hand side of Equation \eqref{eq:HRV-PPP-proof1} satisfies 
\begin{align}
&n^{k+1}\bbE\Big[ \rme^{-\sum_{i=1}^Nf(T_i,a_n^{-1}X_i)} \mathds{1}_{\{\sum_{i=1}^N\varepsilon_{a_n^{-1}X_i}(B_{0,r}^c)\ge k+1\}}\Big]\nonumber\\
&= n^{k+1}\bbE\Big[ \rme^{-\sum_{i=1}^Nf(T_i,a_n^{-1}X_i)} \mathds{1}_{\{\sum_{i=1}^N\varepsilon_{a_n^{-1}X_i}(B_{0,r}^c)= k+1\}}\Big] \nonumber \\
&\quad \quad +n^{k+1}\bbE\Big[ \rme^{-\sum_{i=1}^Nf(T_i,a_n^{-1}X_i)} \mathds{1}_{\{\sum_{i=1}^N\varepsilon_{a_n^{-1}X_i}(B_{0,r}^c)\ge k+2\}}\Big] \nonumber \\
%&\sim \frac{c^{k+1}}{(k+1)!}\, \frac{\bbE\Big[ \rme^{-\sum_{i=1}^Nf(T_i,a_n^{-1}X_i)} \mathds{1}_{\{\sum_{i=1}^N\varepsilon_{a_n^{-1}X_i}(B_{0,r}^c)\ge k+1\}}\Big]}{\bbP\Big(\sum_{i=1}^N\varepsilon_{a_n^{-1}X_i}(B_{0,r}^c)\ge k+1\Big)} + O\left(\frac{1}{n}\right)\nonumber \\
%&= \frac{c^{k+1}}{(k+1)!} \bbE\Big[\rme^{-\sum_{i=1}^Nf(T_i,a_n^{-1}X_i)} \mid \sum_{i=1}^N\varepsilon_{a_n^{-1}X_i}(B_{0,r}^c)\geq  k+1 \Big]\nonumber\\
&= \frac{c^{k+1}}{(k+1)!} \bbE\Big[\rme^{-\sum_{i=1}^Nf(T_i,a_n^{-1}X_i)} \mid \sum_{i=1}^N\varepsilon_{a_n^{-1}X_i}(B_{0,r}^c)=  k+1 \Big](1+o(1))+O\left(\frac{1}{n}\right).\label{eq:HRV-PPP-proof2}
\end{align}
Because $f$ vanishes on $\cT\times B_{0,r}$, the sum $\sum_{i=1}^Nf(t_i,a_n^{-1}X_i)$ depends only on the points of $\Pi$ in $\cT\times a_nB_{0,r}^c$. The independence property of Poisson point processes ensures that, given $\Pi$ has $k+1$ points in  $\cT\times a_n B_{0,r}^c$, those points are independent and uniformly distributed with distribution $c_n^{-1}\mathds{1}_{\{x\in a_n B_{0,r}^c\}}\lambda(\rmd t)\nu(\rmd x)$ for $c_n=\lambda(\cT)\nu(a_n B_{0,r}^c)$ (see \cite{K93}, Chapter 2.4). We deduce 
\begin{align*}
&\bbE\Big[\rme^{-\sum_{i=1}^Nf(T_i,a_n^{-1}X_i)} \mid \sum_{i=1}^N\varepsilon_{a_n^{-1}X_i}(B_{0,r}^c)=  k+1 \Big] \\
&=\int_{E^{k+1}}\rme^{-\sum_{i=1}^{k+1}f(t_i, a_n^{-1}x_i)}\otimes_{i=1}^{k+1} c_n^{-1}\mathds{1}_{\{x_i\in a_n B_{0,r}^c\}}\lambda(\rmd t_i)\nu(\rmd x_i).
\end{align*}
Introduce $\nu_n(\cdot)=n\nu(a_n\cdot)$. 
Since,  $c_n\sim cn^{-1}$, from Lemma \ref{lem:cv_product}, we deduce
\begin{align*}
&\bbE\Big[\rme^{-\sum_{i=1}^Nf(t_i,a_n^{-1}X_i)} \mid \sum_{i=1}^N\varepsilon_{a_n^{-1}X_i}(B_{0,r}^c)=  k+1 \Big] \\
&=\frac{1}{n^{k+1}c_n^{k+1}}\int_{E^{k+1}}\rme^{-\sum_{i=1}^{k+1}f(t_i, x_i)}\otimes_{i=1}^{k+1}\mathds{1}_{\{x_i\in B_{0,r}^c\}} \lambda(\rmd t_i)\nu_n(\rmd x_i)\\
&\to \frac{1}{c^{k+1}}\int_{E^{k+1}}\rme^{-\sum_{i=1}^{k+1}f(t_i, x_i)}\otimes_{i=1}^{k+1} \mathds{1}_{\{x_i\in  B_{0,r}^c\}}\lambda(\rmd t_i)\mu(\rmd x_i).\\ 
\end{align*}
This together with Equation \eqref{eq:HRV-PPP-proof2} implies Equation \eqref{eq:HRV-PPP-proof1}, completing the proof of Theorem~\ref{thm:RV-PP1} in the case $k\geq 1$.
\end{proof}

\subsection{Proof of Theorem~\ref{thm:RV-PP2}}
We begin with three preliminary lemmas that will be useful for the proof of Theorem~\ref{thm:RV-PP2}.

\begin{lemma}\label{lem:binomial}
For $n\geq 1$ and $p\geq 0$, let $S$ be a random variable with binomial distribution with parameter $(n,p)$. Then, for $k=1,\ldots,n$,
\[
\bbP(S\geq k) \leq {n\choose k} p^k.
\] 
\end{lemma}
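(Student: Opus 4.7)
The plan is to give a short proof by union bound, and mention the equivalent factorial moment approach.

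First I would write $S=\sum_{i=1}^n \xi_i$ where the $\xi_i$ are i.i.d. Bernoulli$(p)$ random variables. The event $\{S\geq k\}$ means that there exist $k$ distinct indices $i_1,\ldots,i_k \in \{1,\ldots,n\}$ such that $\xi_{i_1}=\cdots=\xi_{i_k}=1$. Thus
\[
\{S\geq k\} \subseteq \bigcup_{1\leq i_1<\cdots<i_k\leq n}\{\xi_{i_1}=\cdots=\xi_{i_k}=1\}.
\]
Applying the union bound and using independence gives
\[
\bbP(S\geq k) \leq \sum_{1\leq i_1<\cdots<i_k\leq n}\bbP(\xi_{i_1}=\cdots=\xi_{i_k}=1)=\binom{n}{k}p^k,
\]
which is the claimed inequality.

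Alternatively, I could use a factorial moment argument: since $S^{[k]}=S(S-1)\cdots(S-k+1)$ vanishes on $\{S<k\}$ and is bounded below by $k!$ on $\{S\geq k\}$, one has $S^{[k]}\geq k!\,\mathds{1}_{\{S\geq k\}}$, and taking expectations yields $\bbE[S^{[k]}]=\frac{n!}{(n-k)!}p^k\geq k!\,\bbP(S\geq k)$, i.e. the same bound. This viewpoint is natural given that the rest of the paper uses factorial moment measures, but the union bound proof is the shortest.

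There is no real obstacle here; the statement is elementary and a one-line union bound suffices.
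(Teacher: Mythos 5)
Your union bound argument is correct and is essentially identical to the paper's own proof, which also writes $S$ as a sum of i.i.d.\ Bernoulli variables and bounds $\bbP(S\geq k)$ by sub-additivity over the $\binom{n}{k}$ subsets of size $k$. The factorial moment remark is a valid alternative but not needed.
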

\begin{proof}
Let $X_1,\ldots,X_n$ be independent Bernoulli random variables with parameter $p$ and for $j \geq 1$, denote by $\mathcal{P}_k(n)$ the set of subsets  $I\subset\{1,\ldots, n\}$ with exactly $k$ elements. The sum $S=\sum_{i=1}^n X_i$ has a binomial distribution with parameter $(n,p)$. Furthermore, $S\geq k$ if and only if there exists $I\in\mathcal{P}_k(n)$ such  that $X_i=1$ whenever $i\in I$. Using sub-additivity and independence, for any $1 \leq k \leq n$ and $n \geq 1$ we obtain 
\[
\bbP(S\geq k)= \bbP\left(\bigcup_{I\in\mathcal{P}_k(n)}  \bigcap_{i \in I}\left\lbrace X_i=1 \right\rbrace \right) \leq \sum_{I\in\mathcal{P}_k(n)} \prod_{i \in I} \bbP(X_i=1) = {n\choose k}  p^k.
\]
\end{proof}

\begin{lemma}\label{lem:LntoL}
Assume $\nu\in\mathrm{RV}(\cX\setminus\{0\},\{a_n\},\mu)$ and let $f:\cT\times\cX\to[0,\infty)$ be a bounded continuous function with support bounded away from the axis $\cT\times\{0\}$.  Then, as $n\to\infty$,
	 \begin{equation}\label{eq:conv_Lnf}
	 (L_nf)(t):=n\bbE[1-e^{-f(t,X_i/a_n)}]\rightarrow \int_{\cX}\left(1-e^{-f(t,x)}\right)\mu(\rmd x)=:(Lf)(t) \quad \mbox{for all $t\in\cT$}
	 \end{equation}
	 and furthermore the functions $L_nf$, $n\geq 1$, are uniformly bounded on $\cT$.
\end{lemma}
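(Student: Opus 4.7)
The plan is to rewrite $(L_n f)(t)$ as an integral against the rescaled measure $\nu_n(\cdot) := n\nu(a_n \cdot)$, then exploit the regular variation assumption $\nu_n \to \mu$ in $\bbM(\cX\setminus\{0\})$ via Proposition~\ref{ConvMEF}.

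First, by a change of variable,
\[
(L_n f)(t) = n\int_\cX \bigl(1-e^{-f(t,x/a_n)}\bigr)\,\nu(\rmd x) = \int_\cX \bigl(1-e^{-f(t,y)}\bigr)\,\nu_n(\rmd y).
\]
Because $f$ has support bounded away from $F = \cT\times\{0\}$ and the product metric satisfies $d_E((t,x),\cT\times\{0\}) = d_\cX(x,0)$, there exists $r>0$ such that $f(t,x)=0$ whenever $x\in B_{0,r}^\cX$; hence for every fixed $t$ the function $y\mapsto 1-e^{-f(t,y)}$ is bounded continuous on $\cX$ and vanishes on $B_{0,r}^\cX$. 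Applying Proposition~\ref{ConvMEF}(i) with the convergence $\nu_n \to \mu$ in $\bbM(\cX\setminus\{0\})$ then yields, for each $t\in\cT$,
\[
(L_n f)(t) \longrightarrow \int_\cX \bigl(1-e^{-f(t,y)}\bigr)\,\mu(\rmd y) = (Lf)(t),
\]
which is the desired pointwise convergence.

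For the uniform bound, let $M = \|f\|_\infty < \infty$. Since $1-e^{-f(t,y)} \leq (1-e^{-M})\mathds{1}_{B_{0,r}^c}(y)$ for every $t$, we obtain the uniform estimate
\[
\sup_{t\in\cT}(L_n f)(t) \leq (1-e^{-M})\,\nu_n(B_{0,r}^c).
\]
By \citet[Theorem 2.2 (i)]{LR06}, there are only countably many $r>0$ at which $\mu(\partial B_{0,r})>0$; choosing $r$ (possibly slightly smaller than above) at a continuity point of $r\mapsto \mu(B_{0,r}^c)$, we get $\nu_n(B_{0,r}^c)\to \mu(B_{0,r}^c) < \infty$, so the sequence $\nu_n(B_{0,r}^c)$ is bounded. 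This yields a bound on $(L_n f)(t)$ independent of $t$ and $n$.

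The argument is essentially routine given Proposition~\ref{ConvMEF}; the only mildly delicate point is selecting $r$ at a continuity point of $\mu$ to ensure $\nu_n(B_{0,r}^c)$ is bounded uniformly in $n$. No serious obstacle is expected.
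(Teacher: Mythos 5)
Your proof is correct and follows essentially the same route as the paper: rewrite $L_nf$ against $\nu_n=n\nu(a_n\cdot)$, use that $y\mapsto 1-e^{-f(t,y)}$ is bounded continuous and vanishes near $0$ to get pointwise convergence from $\nu_n\to\mu$ in $\bbM(\cX\setminus\{0\})$, and bound $(L_nf)(t)\le(1-e^{-M})\nu_n(B_{0,r}^c)$ uniformly. Your extra care in choosing $r$ at a $\mu$-continuity point is a harmless refinement of the paper's argument.
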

\begin{proof}
For all fixed $t\in\cT$, the function $x\in\cX\mapsto 1-e^{-f(t,x)}$ is bounded continuous and vanishes on a neighborhood of $0$ so that the convergence $n\nu(a_n\cdot)\to\mu(\cdot)$ in $\bbM(\cX\setminus\{0\})$ implies Equation \eqref{eq:conv_Lnf}. Furthermore, since $f$ is nonnegative bounded and with support bounded away from $\cT\times\{0\}$, we have 
\[
0\leq f(t,x)\leq M\mathds{1}_{\{d_\cX(0,x)\geq r\}}\quad \mbox{for some $M\geq 0$ and $r>0$},
\]
whence 
\[
(L_n f)(t)\leq (1-e^{-M})n\nu(a_nB_{0,r}^c)\to (1-e^{-M})\mu(B_{0,r}^c)
\]
is uniformly bounded  on $\cT$.
\end{proof}

\begin{lemma}\label{lem:Laplace_derivative}
Assume $\Psi=\sum_{i=1}^N\varepsilon_{T_i}$ has a finite intensity measure $\lambda\in\bbM_b(\cT)$ and define its Laplace functional as
\[
L_\Psi(g)= \bbE[e^{-\Psi(g)}]=\bbE\left[\prod_{i=1}^N e^{-g(T_i)}\right],\quad \mbox{for  $g:\mathcal{T}\to [0,\infty)$   measurable}.
\]
Let $(g_n)_{n\geq 1}$ and $g$ be nonnegative bounded functions on $\cT$ such that 
\[
\lim_{n\to\infty}g_n(t)= g(t),\quad \mbox{for all $t\in\cT$},
\]
and assume $(g_n)_{n\geq 1}$ is uniformly bounded on $\cT$. Then, as $n\to \infty$,
\[
n\left[1-L_\Psi(n^{-1}g_n)\right]\longrightarrow\int_{\cT} g(t)\lambda(\rmd t).
\]
\end{lemma}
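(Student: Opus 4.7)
The plan is to rewrite $n[1 - L_\Psi(n^{-1} g_n)]$ as an expectation of a pointwise-convergent integrand and then apply the dominated convergence theorem, with Campbell's formula identifying the limit.

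First, I would note that
\[
n\bigl[1 - L_\Psi(n^{-1} g_n)\bigr] = \bbE\Bigl[\,n\bigl(1 - \rme^{-n^{-1}\Psi(g_n)}\bigr)\Bigr]
= \bbE\left[\Psi(g_n)\cdot\phi\bigl(n^{-1}\Psi(g_n)\bigr)\right],
\]
where $\phi(x) = (1-\rme^{-x})/x$ for $x>0$ and $\phi(0)=1$. Since $\lambda \in \bbM_b(\cT)$, one has $\bbE[N] = \lambda(\cT) < \infty$, so $N$ is finite almost surely. Using the uniform bound $\|g_n\|_\infty \le M$, the variable $n^{-1}\Psi(g_n) \le MN/n$ converges to $0$ almost surely, and therefore $\phi(n^{-1}\Psi(g_n)) \to 1$ almost surely. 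On the other hand, the pointwise convergence $g_n \to g$ and the finiteness of $N$ yield $\Psi(g_n) = \sum_{i=1}^N g_n(T_i) \to \sum_{i=1}^N g(T_i) = \Psi(g)$ almost surely.

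Hence the integrand converges almost surely to $\Psi(g)$. To pass to the limit in the expectation, I would use the elementary bound $1 - \rme^{-x} \le x$ for $x \ge 0$, which gives
\[
n\bigl(1 - \rme^{-n^{-1}\Psi(g_n)}\bigr) \le \Psi(g_n) \le M\,N,
\]
and $\bbE[MN] = M\lambda(\cT) < \infty$, so the dominated convergence theorem applies. Finally, Campbell's formula yields $\bbE[\Psi(g)] = \int_\cT g\,\rmd\lambda$, giving the stated limit.

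The only slightly delicate point is the joint convergence of $\Psi(g_n)$ together with the verification that $n^{-1}\Psi(g_n) \to 0$; both rely on $N<\infty$ a.s., which is guaranteed by $\lambda(\cT)<\infty$. Everything else is a routine application of $1-\rme^{-x}\le x$ and dominated convergence.
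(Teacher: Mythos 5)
Your proof is correct and follows essentially the same route as the paper's: rewrite $n[1-L_\Psi(n^{-1}g_n)]$ as $\bbE[n(1-\rme^{-n^{-1}\Psi(g_n)})]$, show the integrand converges a.s.\ to $\Psi(g)$, dominate it by $\Psi(g_n)\le MN$ with $\bbE[N]=\lambda(\cT)<\infty$ using $1-\rme^{-x}\le x$, and conclude via dominated convergence and Campbell's formula. The factorization through $\phi(x)=(1-\rme^{-x})/x$ is a cosmetic variation only.
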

\begin{proof} By definition of the Laplace functional $L_\Psi$, we have 
\[
n\left[1-L_\Psi(n^{-1}g_n)\right]=\bbE\left[n\left(1-e^{-n^{-1}\Psi(g_n)}\right)\right]
\]
where
\[
n\left(1-e^{-n^{-1}\Psi(g_n)}\right)\longrightarrow \Psi(g), \quad \mbox{almost surely as $n\to\infty$.}
\]
Furthermore, the inequality $1-e^{-x}\leq x$, $x\geq 0$, together with the uniform bound  $g_n(t)\leq M$, $n\geq 1$, imply the domination condition
\[
n\left(1-e^{-n^{-1}\Psi(g_n)}\right)\leq \Psi(g_n) \leq M \Psi(1),
\]
where $\Psi(1)=N$ satisfies $\mathbb{E}[N]=\lambda(\cT)<\infty$.
We deduce, thanks to Lebesgue convergence Theorem,
\[
n\left[1-L_\Psi(n^{-1}g_n)\right]\longrightarrow \bbE[\Psi(g)].
\]
Campbell's Theorem  (see Chapter 3.2 in \cite{K93}) states the equality $\bbE[\Psi(g)]=\int_\cT g(t)\lambda(\rmd t)$ which concludes the proof.
\end{proof}

\begin{proof}[Proof of Theorem~\ref{thm:RV-PP2}, case $k=0$]
	Similarly as in the proof of Theorem~\ref{thm:RV-PP1}, we need to prove that 
	\begin{equation}\label{eq:conv_laplace}
 n \bbE \left[ 1-\rme^{-\int_{E} f(a_n^{-1}z)\Pi(\rmd z)}\right]  \rightarrow \int_{E}\left(1-\rme^{-f(t,x)}\right) \lambda(\rmd t) \mu(\rmd x),\quad \mbox{as $n\to\infty$},
	 \end{equation}
	 for all bounded continuous functions $f:E\to[0,\infty)$ with support bounded away from $F$. Conditioning with respect to the base point process $\Psi$, the left-hand side of Equation \eqref{eq:conv_laplace} is rewritten as
\begin{align*}
 n \bbE \left[ 1-\exp\left(-\int_{E} f(a_n^{-1}z)\Pi(\rmd z)\right)\right]
&= n \bbE\left[\bbE \left[ 1-\exp\left(-\sum_{i=1}^N f(T_i,X_i)\right)\Big| \Psi\right]\right]\\
&=n\left\{1-\bbE\left[\bbE\left[\prod_{i=1}^N e^{-f(T_i,X_i)}  \Big| \Psi\right]\right]\right\}\\
 &= n\left\{1-\bbE\left[\prod_{i=1}^N \left(1-\frac{1}{n}(L_nf)(T_i) \right) \right]\right\}.
\end{align*}
In the last equality, we use the conditional independence of the $X_i$ given $\Psi$ and the definition of $L_nf$ in Equation~\eqref{eq:conv_Lnf}.
The Laplace functional of the base point process $\Psi$ defined in Lemma~\ref{lem:Laplace_derivative} satisfies, for $h:\cT\to (0,1]$ measurable, 
\[
\bbE\left[\prod_{i=1}^N h(T_i) \right]=\bbE\left[\exp\left(-\sum_{i=1}^N -\ln h(T_i) \right)\right]
=L_\Psi(-\ln h).
\]
This gives
\begin{equation}\label{eq:proof-laplaca-1}
n \bbE \left[ 1-\exp\left(-\int_{E} f(a_n^{-1}z)\Pi(\rmd z)\right)\right]
=n\left[1-L_\Psi\left(-\ln\left(1-\frac{1}{n}L_nf\right) \right) \right].
\end{equation}
We define 
\[
g_n(t)=-n\log \left(1-\frac{1}{n}(L_nf)(t)\right)  \quad \mbox{and}\quad g(t)=(Lf)(t).
\]
Lemma~\ref{lem:LntoL} implies that $g_n(t)\to g(t)$, as $n\to\infty$, and is uniformly bounded on $\cT$. Lemma~\ref{lem:Laplace_derivative} then   
entails
\begin{align*}
&n \bbE \left[ 1-\exp\left(-\int_{E} f(a_n^{-1}z)\Pi(\rmd z)\right)\right]
= n\left[1-L_\Psi(n^{-1} g_n) \right]\\
& \longrightarrow  \int_\cT g(t)\lambda(\rmd t)
=\int_{E}\left(1-e^{-f(t,x)}\right)\lambda(\rmd t)\mu(\rmd x).
\end{align*}
This proves Equation~\eqref{eq:conv_laplace} and Theorem~\ref{thm:RV-PP2} in the case $k=0$.
\end{proof}

\begin{proof}[Proof of Theorem~\ref{thm:RV-PP2}, case $k\geq 1$]
The proof is an adaptation of the proof of Theorem~\ref{thm:RV-PP1}, case $k\geq 1$. We fix $r>0$.  Conditioning upon the events $\{ N=j\}$, $j\geq 1$, and using the independence of the marks, we have
\begin{align}
n^{k+1}\bbP\Big(\sum_{i=1}^N\varepsilon_{a_n^{-1}X_i}(B_{0,r}^c)\ge k+1\Big)&=n^{k+1}\sum_{j=k+1}^\infty \bbP(N=j)\bbP\Big(\sum_{i=1}^j\varepsilon_{a_n^{-1}X_i}(B_{0,r}^c)\ge k+1\Big)\nonumber\\
&=\sum_{j=k+1}^\infty \bbP(N=j)n^{k+1}\bbP(S_{j,n}\geq k+1)\label{eq:HRV-PPP2-proof1}
\end{align}
with $S_{j,n}$ a random variable with binomial distribution with parameter $(j, \nu(a_nB_{0,r}^c))$. Regular variation implies $\nu(a_nB_{0,r}^c)\sim n^{-1}\mu(B_{0,r}^c)$, as $n\to\infty$, so that the binomial probability distribution has asymptotic
\begin{equation}\label{eq:HRV-PPP2-proof2}
n^{k+1}\bbP(S_{j,n}\geq k+1)\sim n^{k+1}\bbP(S_{j,n}= k+1) \sim { j \choose k+1}  \mu(B_{0,r}^c)^{k+1}.
\end{equation}
In order to apply dominated convergence and plug-in the equivalent \eqref{eq:HRV-PPP2-proof2}  in Equation \eqref{eq:HRV-PPP2-proof1}, we use   the upper bound from Lemma~\ref{lem:binomial} yielding
\[
n^{k+1}\bbP(S_{j,n}\geq k+1)\leq { j \choose k+1} n^{k+1} \nu(a_nB_{0,r}^c)^{k+1}.
\]
The convergence $n\nu(a_nB_{0,r}^c)\to \mu(B_{0,r}^c)$ implies the bound $n\nu(a_nB_{0,r}^c)\leq H$ for some $H>0$. We deduce the uniform bound
\[
n^{k+1}\bbP(S_{j,n}\geq k+1)\leq H^{k+1} { j \choose k+1} \leq \frac{H^{k+1}}{(k+1)!} j^{k+1}.
\] 
Since $\Psi$ has a finite factorial moment measure of order $k+1$,  $N$ has a finite moment of order $k+1$ and
\[
\sum_{j= k+1}^\infty \bbP(N=j)\frac{H^{k+1}}{(k+1)!} j^{k+1} \leq \frac{H^{k+1}}{(k+1)!} \bbE[N^{k+1}]<\infty.
\] 
Equations~\eqref{eq:HRV-PPP2-proof1} and~\eqref{eq:HRV-PPP2-proof2} together with dominated convergence imply
\begin{align*}
n^{k+1}\bbP\Big(\sum_{i=1}^N\varepsilon_{a_n^{-1}X_i}(B_{0,r}^c)\ge k+1\Big)
&\sim n^{k+1}\bbP\Big(\sum_{i=1}^N\varepsilon_{a_n^{-1}X_i}(B_{0,r}^c)= k+1\Big) \\
&\to c:=\mu(B_{0,r}^c)^{k+1} \sum_{j\geq k+1}{ j \choose k+1}\bbP(N=j).
\end{align*}
The limit  $c$ is positive as soon as $\bbP(N\geq k+1)>0$ which is ensured by the condition $M_{k+1}$ non-null. We deduce as in the proof of Theorem \ref{thm:RV-PP1}
\begin{align*}
&n^{k+1}\bbE\Big[\rme^{-\sum_{i=1}^Nf(T_i,a_n^{-1}X_i)} \mathds{1}_{\{\sum_{i=1}^N\varepsilon_{a_n^{-1}X_i}(B_{0,r}^c)\ge k+1\}}\Big]\nonumber\\
&\sim n^{k+1}\bbE\Big[\rme^{-\sum_{i=1}^Nf(T_i,a_n^{-1}X_i)} \mathds{1}_{\{\sum_{i=1}^N\varepsilon_{a_n^{-1}X_i}(B_{0,r}^c)= k+1\}}\Big].
\end{align*}
Next,  we condition on $\{N=j\}$ and introduce the symmetric probability $\pi_j$ on $\cT^j$ giving the position of the points of $\Psi$  conditionally on $\{ N=j \}$, see \cite[Section 5.3]{DVJ03}. Denoting $p_j=\bbP(N=j)$, we get
\begin{align*}
  &\bbE\Big[\rme^{-\sum_{i=1}^Nf(T_i,a_n^{-1}X_i)} \mathds{1}_{\{\sum_{i=1}^N\varepsilon_{a_n^{-1}X_i}(B_{0,r}^c)= k+1\}}\Big]\\
%  &=\sum_{j\geq k+1}\bbP(N=j) \bbE\Big[\rme^{-\sum_{i=1}^Nf(T_i,a_n^{-1}X_i)} \mathds{1}_{\{\sum_{i=1}^N\varepsilon_{a_n^{-1}X_i}(B_{0,r}^c)= k+1\}} \Big|\ N=j \Big]\\
  &=\sum_{j\geq k+1}\int_{E^{j}}\rme^{-\sum_{i=1}^j f(t_i,a_n^{-1}x_i)}\mathds{1}_{\{\sum_{i=1}^j\varepsilon_{x_i}(a_nB_{0,r}^c)= k+1\}}p_j \pi_j(\rmd t_1,\ldots,\rmd t_j)\nu(\rmd x_1)\ldots \nu(\rmd x_j).
\end{align*}
Consider the $j$-th term for fixed $j\geq k+1$. The event $\sum_{i=1}^j\varepsilon_{x_i}(a_nB_{0,r}^c)= k+1$ can be decomposed into a disjoint union of ${j\choose k+1}$ disjoint events indexed by a subset  $I\subset\{1,\ldots,j\}$ of size $k+1$ such that $x_i\in a_nB_{0,r}^c$ if $i\in I$ and $x_i\in a_nB_{0,r}$ if $i\notin I$. By symmetry, each event yields an equal contribution and we consider $I=\{1,\ldots,k+1\}$. Then  $f(t_i,a_n^{-1}x_i)=0$ for $i>k+1$ and we can integrate out the corresponding $x_i$'s, yielding the contribution
\begin{align*}
\nu(a_nB_{0,r})^{j-k-1}\int_{\cT^{j}\times(a_nB_{0,r}^c)^{k+1}}\rme^{-\sum_{i=1}^{k+1} f(t_i,a_n^{-1}x_i)} p_j \pi_j(\rmd t_1,\ldots,\rmd t_j) \nu(\rmd x_1)\ldots\nu(\rmd x_{k+1}).
\end{align*}
We then multiply by ${j\choose k+1}$, introduce the $j$th Janossy measure $J_j=j! p_j\pi_j$ of $\Psi$ (see \cite{DVJ03} Chapter 5.3) and sum over $j\geq k+1$ to get
\begin{align*}
  &\bbE\Big[\rme^{-\sum_{i=1}^Nf(T_i,a_n^{-1}X_i)} \mathds{1}_{\{\sum_{i=1}^N\varepsilon_{a_n^{-1}X_i}(B_{0,r}^c)= k+1\}}\Big]\\
 &=\sum_{j\geq k+1} \frac{\nu(a_nB_{0,r})^{j-k-1}}{(k+1)!(j-k-1)!}\cdots\\
 &\quad\cdots \int_{\cT^{j}\times(a_nB_{0,r}^c)^{k+1}}\rme^{-\sum_{i=1}^{k+1} f(t_i,a_n^{-1}x_i)}  J_j(\rmd t_1,\ldots,\rmd t_j) \nu(\rmd x_1)\ldots\nu(\rmd x_{k+1}).
\end{align*}
Note that  $\nu(a_n B_{0,r})\to 1$ and $n\nu(a_n\cdot)\to\mu(\cdot)$ as $n\to\infty$. Then, multiplying by $n^{k+1}$, from  Lemma \ref{lem:cv_product}, we have as $n \to \infty$
\begin{align*}
  &n^{k+1}\bbE\Big[\rme^{-\sum_{i=1}^Nf(T_i,a_n^{-1}X_i)} \mathds{1}_{\{\sum_{i=1}^N\varepsilon_{a_n^{-1}X_i}(B_{0,r}^c)= k+1\}}\Big]\\
 & \longrightarrow \frac{1}{(k+1)!}\sum_{j\geq k+1} \int_{\cT^{j}\times(B_{0,r}^c)^{k+1}}\rme^{-\sum_{i=1}^{k+1} f(t_i,x_i)}  \frac{J_j(\rmd t_1,\ldots,\rmd t_j)}{(j-k-1)!} \mu(\rmd x_1)\ldots\mu(\rmd x_{k+1}) \\
 &= \frac{1}{(k+1)!}\int_{\cT^{k+1}\times(B_{0,r}^c)^{k+1}}\rme^{-\sum_{i=1}^{k+1} f(t_i,x_i)}  M_{k+1}(\rmd t_1,\ldots,\rmd t_{k+1})\mu(\rmd x_1)\ldots\mu(\rmd x_{k+1})\\
&= \int_{\mathcal{N}}\rme^{-\pi(f)}\mathds{1}_{\{\pi(F_r^c )\geq k+1\}}\mu_{k+1}^*(\rmd\pi).
\end{align*}
In view of Theorem~\ref{theo:cv-N-Nk}, the convergence~\eqref{thm:PiRV2} in $\bbM(\cN\setminus\cN_k)$ follows. Note that in the third line above, we use Fubini-Tonelli Theorem to exchange summation and integration as well as  the following identity linking Janossy measures and factorial moment measures (\cite{DVJ03} Theorem 5.4.II)
\[
M_{k+1}(B)=\sum_{m\geq 0} \frac{J_{k+1+m}(B\times \cT^m)}{m!},\quad B\in \mathcal{B}(\cT^{k+1}) .
\]
This concludes the proof.
\end{proof}

\subsection{Proof of Theorem~\ref{thm:RV-PP3}}
The next two lemmas are variants of Lemmas~\ref{lem:LntoL} and \ref{lem:Laplace_derivative} that will be useful for the proof of Theorem~\ref{thm:RV-PP3}.
\begin{lemma}\label{lemunifL}
Under the same notation and assumptions as in Lemma~\ref{lem:LntoL}, assume furthermore that $f:\cT\times\cX\to [0,\infty)$ is Lipschitz continuous and $\cT$ is locally compact. Then $Lf$ is continuous and the convergence $L_nf\to Lf$ in Equation~\eqref{eq:conv_Lnf} is uniform on $\cT$.
\end{lemma}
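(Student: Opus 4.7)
The plan is to reduce uniform convergence of the Laplace-type integrals to a single weak-convergence statement for finite measures, handling the $t$-dependence of the test function through the Bounded-Lipschitz (Dudley) metric.

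First I would fix $r>0$ small enough that $f$ vanishes on $F^r=\cT\times B_{0,r}$, and with $\mu(\partial B_{0,r})=0$; since the boundary condition excludes only countably many $r$, such a choice is always possible. Writing $\tilde\nu_n(\cdot)=n\nu(a_n\cdot)$, regular variation combined with Proposition~\ref{ConvMEF} shows that the restrictions $\tilde\nu_n^{r}$ converge weakly to the finite measure $\mu^{r}$ in $\bbM_b(\cX\setminus B_{0,r})$. Continuity---indeed Lipschitz continuity---of $Lf$ then follows immediately from $|e^{-u}-e^{-v}|\le |u-v|$ for $u,v\ge 0$ and the Lipschitz property of $f$: writing $K$ for its Lipschitz constant,
\[
|Lf(t)-Lf(t')|\le K\,d_\cT(t,t')\,\mu(\cX\setminus B_{0,r}),
\]
and the right-hand factor is finite.

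Next I would observe that, since $f(t,\cdot)$ vanishes on $B_{0,r}$,
\[
L_nf(t)-Lf(t)=\int_{\cX\setminus B_{0,r}}\phi_t(x)\,\bigl(\tilde\nu_n^{r}-\mu^{r}\bigr)(dx),\qquad \phi_t(x):=1-e^{-f(t,x)},
\]
and that the family $\{\phi_t:t\in\cT\}$ is uniformly controlled in Bounded-Lipschitz norm: $\|\phi_t\|_\infty\le 1$ and $\mathrm{Lip}(\phi_t)\le K$, both uniformly in $t$. Because $\cX\setminus B_{0,r}$ is a complete separable metric space, weak convergence of finite Borel measures on it is metrized by the Dudley metric $d_{BL}$, and the uniform bound on the $BL$-norm of $\phi_t$ yields
\[
\sup_{t\in\cT}|L_nf(t)-Lf(t)|\le (1+K)\,d_{BL}(\tilde\nu_n^{r},\mu^{r})\longrightarrow 0,
\]
which is exactly uniform convergence on $\cT$.

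The main obstacle I anticipate is the $t$-dependence of the test functions, which prevents a direct appeal to a single weak-convergence statement. The clean resolution is to notice that joint Lipschitz continuity of $f$ makes $\{\phi_t\}$ a uniformly bounded subset of the Bounded-Lipschitz ball, so that the Dudley metric---which dominates integration against any such function---delivers uniform control in $t$. An alternative route through equi-Lipschitz continuity of $(L_nf)$ in $t$ combined with Arzel\`a-Ascoli would only give uniform convergence on compact subsets of $\cT$, and local compactness alone does not seem enough to upgrade this to the whole space without an additional tightness input; this is why the $BL$-metric formulation is preferable.
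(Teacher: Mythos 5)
Your proof is correct, and it takes a genuinely different route from the paper's. The paper argues via Arzel\`a--Ascoli: it combines the pointwise convergence from Lemma~\ref{lem:LntoL} with equicontinuity of the family $\{L_nf\}_{n\ge 1}$, which follows from the $K$-Lipschitz property of $f$ via the bound $|(L_nf)(t)-(L_nf)(t')|\le n\nu(a_nB_{0,r}^c)\max(2,Kd_\cT(t,t'))$, and then invokes Arzel\`a--Ascoli on the locally compact space $\cT$. You instead rewrite $L_nf(t)-Lf(t)$ as the integral of $\phi_t=1-e^{-f(t,\cdot)}$ against $\tilde\nu_n^{r}-\mu^{r}$, observe that $\{\phi_t\}_{t\in\cT}$ lies in a fixed ball of the bounded-Lipschitz norm, and conclude from the fact that the Dudley metric metrizes weak convergence of finite Borel measures on the closed (hence complete separable) set $\cX\setminus B_{0,r}$. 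Your remark about the two approaches is well taken: Arzel\`a--Ascoli only yields uniform convergence on compact subsets, so the paper's argument as written gives genuinely uniform convergence only when $\cT$ is compact (which it is, namely $[0,T]$, in all the applications of the lemma), whereas your $d_{BL}$ argument gives uniform convergence on all of $\cT$ and does not use local compactness at all. The trade-off is that the paper's route is elementary and self-contained given Lemma~\ref{lem:LntoL}, while yours imports the (standard but nontrivial) metrization of weak convergence by the bounded-Lipschitz metric; in exchange you get a strictly stronger and cleaner statement. Your preliminary reduction --- choosing $r$ with $\mu(\partial B_{0,r})=0$ so that the restrictions converge weakly in $\bbM_b(\cX\setminus B_{0,r})$, and the Lipschitz bound $|Lf(t)-Lf(t')|\le K\,d_\cT(t,t')\,\mu(\cX\setminus B_{0,r})$ for continuity of $Lf$ --- is also sound.
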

\begin{proof}
Lemma~\ref{lem:LntoL} states the pointwise convergence $(L_nf)(t)\to (Lf)(t)$ for all $t\in\cT$. Under the assumption that $f$ is $K$-Lipschitz, the family of functions $\{L_nf,n\geq 1\}$ is equicontinuous, since for $t,t' \in \cT$, we have
\begin{align*}
|(L_nf)(t)-(L_nf)(t')|&\leq n\int_{B_{0,r}^c}|e^{-f(t,x/a_n)}-e^{-f(t',x/a_n)}|\nu(\rmd x)\\
&\leq n\nu(a_nB_{0,r}^c)\max(2,Kd_\cT(t,t')),
\end{align*}
where    $n\nu(a_nB_{0,r}^c)\to\mu(B_{0,r}^c)$ is bounded. Pointwise convergence together with equicontinuity implies uniform convergence to a continuous limit thanks to Arzela-Ascoli theorem applied on $\cT$ locally compact.
\end{proof}

\begin{lemma}\label{lem:Laplace_derivative2}
Assume $\Psi_n=m_n^{-1}\sum_{i=1}^{N_n}\varepsilon_{T_i^n}$ satisfies Equation~\eqref{Psi_triangle} and $N_n/m_n$ is uniformly integrable. Define the Laplace functional 
\[
L_{\Psi_n}(g)= \bbE[e^{-\Psi_n(g)}]=\bbE\left[\prod_{i=1}^{N_n} e^{-m_n^{-1}g(T_i)}\right],\quad \mbox{for  $g:\mathcal{T}\to [0,\infty)$   measurable}.
\]
If $g$ is a nonnegative bounded continuous function and $(g_n)_{n\geq 1}$ are nonnegative  functions  such that $g_n\to g$ uniformly on $\cT$, then
\[
n\left[1-L_{\Psi_n}(n^{-1}g_n)\right]\longrightarrow\int_{\cT} g(t)\lambda(\rmd t),\quad \mbox{as $n\to\infty$}.
\]
\end{lemma}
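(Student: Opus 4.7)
\medskip
\noindent\textbf{Proof plan for Lemma~\ref{lem:Laplace_derivative2}.}

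The strategy is to rewrite the Laplace functional as $n[1-L_{\Psi_n}(n^{-1}g_n)] = n\bbE[1-e^{-Y_n}]$ where $Y_n := n^{-1}\Psi_n(g_n) = (nm_n)^{-1}\sum_{i=1}^{N_n} g_n(T_i^n)$, and to approximate $n(1-e^{-Y_n})$ by $nY_n = \Psi_n(g_n)$ using the elementary inequality $0 \leq x-(1-e^{-x}) \leq x$ for $x\geq 0$. We then pass to the limit by means of uniform integrability.

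\emph{Step 1: $\Psi_n(g_n)\to\lambda(g)$ in probability.} Write $\Psi_n(g_n)=\Psi_n(g)+\Psi_n(g_n-g)$. The second term is bounded by $\|g_n-g\|_\infty\,(N_n/m_n)$ where $\|g_n-g\|_\infty\to 0$ by uniform convergence, while $N_n/m_n$ is stochastically bounded since it is uniformly integrable; hence $\Psi_n(g_n-g)\to 0$ in probability. For the first term, the weak convergence in probability $\Psi_n\to\lambda$ in $\bbM_b(\cT)$ together with the fact that $g\in\mathcal{C}_b(\cT)$ gives $\Psi_n(g)\to\lambda(g)$ in probability.

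\emph{Step 2: $\Psi_n(g_n)$ is uniformly integrable.} Setting $M:=\sup_n\|g_n\|_\infty<\infty$ (finite because $g_n\to g$ uniformly and $g$ is bounded), we have the domination $0\leq \Psi_n(g_n)\leq M\,(N_n/m_n)$, and uniform integrability is preserved by such a domination. Combining with Step~1, we obtain $\bbE[\Psi_n(g_n)]\to\lambda(g)$.

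\emph{Step 3: the remainder is negligible.} Define the nonnegative function $r(x):=x-(1-e^{-x})$ on $[0,\infty)$. One has $0\leq r(x)\leq x$ and $r(x)/x\to 0$ as $x\to 0^+$, so for every $\varepsilon>0$ there exists $\delta>0$ with $r(x)\leq \varepsilon x$ on $[0,\delta]$. Splitting on $\{Y_n\leq\delta\}$ and $\{Y_n>\delta\}$,
\begin{align*}
n\,r(Y_n)\ \leq\ \varepsilon\,nY_n + nY_n\mathds{1}_{\{Y_n>\delta\}}
\ =\ \varepsilon\,\Psi_n(g_n)+\Psi_n(g_n)\mathds{1}_{\{\Psi_n(g_n)>n\delta\}}.
\end{align*}
Taking expectations and using Step~2, the first term contributes at most $\varepsilon\lambda(g)+o(1)$, while uniform integrability of $\Psi_n(g_n)$ forces $\bbE[\Psi_n(g_n)\mathds{1}_{\{\Psi_n(g_n)>n\delta\}}]\to 0$ since $n\delta\to\infty$. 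Letting $\varepsilon\downarrow 0$ yields $\bbE[n\,r(Y_n)]\to 0$.

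\emph{Step 4: conclusion.} Since $n(1-e^{-Y_n})=nY_n-n\,r(Y_n)=\Psi_n(g_n)-n\,r(Y_n)$, Steps~2 and~3 give
\[
n[1-L_{\Psi_n}(n^{-1}g_n)]=\bbE[\Psi_n(g_n)]-\bbE[n\,r(Y_n)]\longrightarrow \lambda(g)=\int_\cT g(t)\lambda(\rmd t).
\]

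The main technical point is Step~3: we cannot bound the remainder by $n^{-1}\Psi_n(g_n)^2/2$ directly because the second moment of $N_n/m_n$ is not assumed finite. The truncation at level $\delta$ exploits only the uniform integrability hypothesis, which is therefore used in an essential way here.
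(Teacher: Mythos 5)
Your proof is correct, and it follows essentially the same route as the paper: write $n[1-L_{\Psi_n}(n^{-1}g_n)]$ as $\bbE[\Psi_n(g_n)]$ plus a remainder coming from $x-(1-e^{-x})$, show the remainder is negligible, and prove $\bbE[\Psi_n(g_n)]\to\lambda(g)$ by combining the convergence in probability $\Psi_n(g)\to\lambda(g)$ from~\eqref{Psi_triangle} with the uniform integrability of $N_n/m_n$. The only cosmetic difference in the main line is that the paper transfers from $g_n$ to $g$ via the sandwich $\bbE[\Psi_n(g)]-\varepsilon\leq\bbE[\Psi_n(g_n)]\leq\bbE[\Psi_n(g)]+\varepsilon$ (using $\bbE[\Psi_n(1)]=1$), whereas you establish $\Psi_n(g_n)\to\lambda(g)$ in probability directly and then invoke uniform integrability of $\Psi_n(g_n)$ itself; both are equally valid.

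Where your write-up genuinely adds something is Step~3. The paper disposes of the remainder in one line, asserting it is $o(1)$ from the bound $\Psi_n(g_n)\leq MN_n/m_n$; the natural quantitative version of that claim, via $0\leq x-(1-e^{-x})\leq x^2/2$, would give a bound of order $n^{-1}\bbE[(N_n/m_n)^2]$, which is not controlled under the stated hypotheses since only uniform integrability of $N_n/m_n$ (not a second moment) is assumed. Your truncation of $Y_n$ at level $\delta$, which uses only $0\leq r(x)\leq \varepsilon x$ near $0$ and $r(x)\leq x$ globally together with uniform integrability of $\Psi_n(g_n)$, is exactly the argument needed to make this step rigorous under the lemma's actual assumptions. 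So your proof is not merely a restatement but a cleaner justification of the one point the paper leaves implicit.
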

\begin{proof} By definition of the Laplace functional $L_{\Psi_n}$, 
\[
n\left[1-L_{\Psi_n}(n^{-1}g_n)\right]-\mathbb{E}[\Psi_n(g_n)] = \bbE\left[n\left(1-e^{-n^{-1}\Psi_n(g_n)} - n^{-1}\Psi_n(g_n)\right)\right].
\]
Since the functions $g_n$ are uniformly bounded by some constant $M$, $\Psi_n(g_n)$ is  bounded by $MN_n/m_n$ and we deduce
\begin{equation}\label{eq:asymptotic}
n\left[1-L_{\Psi_n}(n^{-1}g_n)\right]=\mathbb{E}[\Psi_n(g_n)]+o(1).
\end{equation}
Let $\varepsilon>0$. The uniform convergence $g_n\to g$ implies that for sufficiently large $n$, $|g_n(x)-g(x)|\leq \varepsilon$ for all $x\in\mathcal{X}$ so that
\begin{equation}\label{eq:sandwich}
\mathbb{E}[\Psi_n(g)]-\varepsilon\leq \mathbb{E}[\Psi_n(g_n)]\leq \mathbb{E}[\Psi_n(g)]+\varepsilon.
\end{equation}
We have used here the property that $\mathbb{E}[\Psi_n(1)]=\mathbb{E}[N_n/m_n]=1$. The function $g$ being continuous and bounded, Equation~\eqref{Psi_triangle} implies the convergence $\Psi_n(g)\to \int g\rmd \lambda$ in probability. Using the upper bound $\Psi_n(g)\leq M N_n/m_n$ and the uniform integrability of $N_n/m_n$, we deduce  $\mathbb{E}[\Psi_n(g)]\to \int g\rmd \lambda$. Thanks to the inequalities \eqref{eq:sandwich}, we deduce $\mathbb{E}[\Psi_n(g_n)]\to \int g\rmd \lambda$. Together with Equation~\eqref{eq:asymptotic}, this concludes the proof of the Lemma.
\end{proof}

\begin{proof}[Proof of Theorem~\ref{thm:RV-PP3}, case $k=0$]
	The proof uses arguments and notation from the proof of Theorem \ref{thm:RV-PP2}. We need to prove that for all bounded Lipschitz continuous function $f:\cT\times\cX\to[0,\infty)$ with support bounded away from the axis $\cT\times\{0\}$,
\begin{equation}\label{eq:tri}
 n \bbE \left[ 1-\rme^{-\int_{E } f(a_{nm_n}^{-1}z)\Pi_n(\rmd z)}\right]  \rightarrow \int_{ \cT\times\cX}\left(1-\rme^{-f(t,x)}\right) \lambda(\rmd t) \mu(\rmd x),\quad \mbox{as $n\to\infty$}.
	 \end{equation}	
On the one hand, similarly as in Equation~\eqref{eq:proof-laplaca-1}, we have
\begin{align*}
	n \bbE \left[ 1-\rme^{-\int_{E} f(a_{nm_n}^{-1}z)\pi(\rmd z)}\right] & = n \left[  1- \mathbb{E}\left[  \prod_{i=1}^{N_n} \left( 1- \frac{1}{nm_n}(L_{nm_n}f)(T_i^n) \right) \right]  \right]  \\ 
	&= n\left[1-L_{\Psi_n}\left(-n^{-1}g_n\right)\right]
\end{align*}
	with 
\[
g_n(t)=-nm_n \log \left(1- \frac{1}{nm_n}(L_{nm_n }f )(t) \right),\quad t\in\cT.
\]
By Lemma~\ref{lemunifL}, $g_n$ converges to $g=Lf$ uniformly on $\cT$ and $g$ is continuous.  Lemma~\ref{lem:Laplace_derivative2} then implies
\[
 n\left[1-L_{\Psi_n}\left(-n^{-1}g_n\right)\right]\to \int_{\cT} (Lf)(t)\lambda(\rmd t)=\int_{ \cT\times\cX}\left(1-\rme^{-f(t,x)}\right) \lambda(\rmd t) \mu(\rmd x).
\]
This shows Equation \eqref{eq:tri} and concludes the proof.
\end{proof}

The next two lemmas are used in the proof of Theorem~\ref{thm:RV-PP3} for the  case $k\geq 1$. We recall that $\nu_n(\cdot)=n\nu(a_n \cdot)$.
\begin{lemma}\label{lemtodo1}
Assume $\cT$ locally compact and let $k\geq 0$. For $f:\cT\times\cX\to [0,\infty)$  Lipschitz continuous vanishing on $\cT\times B_{0,r}$, the sequence of functions $L_{k+1,n} f$  defined on $\cT^{k+1}$ by
$$
(L_{k+1,n} f)(t_1,\ldots,t_{k+1})= \int_{( B_{0,r}^c)^{k+1}}\rme^{-\sum_{1\le i\le k+1}f(t_i,x_i)} \nu_n(dx_{1})\cdots \nu_n(dx_{k+1}),\quad n\geq 1\,,
$$
 converges uniformly  to $L_{k+1}f$  defined by
$$
(L_{k+1}f)(t_1,\ldots,t_{k+1})= \int_{( B_{0,r}^c)^{k+1}}\rme^{-\sum_{1\le i\le k+1}f(t_i,x_i)} \mu(dx_1)\cdots \mu(dx_{k+1})\,.
$$
Furthermore the sequence $L_{k+1,n}f$ is uniformly bounded.\end{lemma}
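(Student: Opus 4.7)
The plan is to exploit the product structure of the integrand and reduce the statement to the one-dimensional result of Lemma~\ref{lemunifL}. Since the exponential of a sum is a product of exponentials, Fubini's theorem yields the factorization
\[
(L_{k+1,n}f)(t_1,\ldots,t_{k+1})=\prod_{i=1}^{k+1}\phi_n(t_i),\qquad \phi_n(t):=\int_{B_{0,r}^c}\rme^{-f(t,x)}\nu_n(\rmd x),
\]
and the analogous identity for $L_{k+1}f$ with $\phi(t):=\int_{B_{0,r}^c}\rme^{-f(t,x)}\mu(\rmd x)$. The task thereby reduces to proving that $\phi_n\to\phi$ uniformly on $\cT$ and that $(\phi_n)_{n\ge 1}$ is uniformly bounded.

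To connect $\phi_n$ with Lemma~\ref{lemunifL}, I would use that $f$ vanishes on $\cT\times B_{0,r}$, so $1-\rme^{-f(t,x)}$ vanishes on $B_{0,r}$. Writing $\rme^{-f(t,x)}=1-(1-\rme^{-f(t,x)})$ and integrating over $B_{0,r}^c$ yields
\[
\phi_n(t)=\nu_n(B_{0,r}^c)-(L_nf)(t),\qquad \phi(t)=\mu(B_{0,r}^c)-(Lf)(t),
\]
where the functional term $L_n f$ is exactly the one defined in Lemma~\ref{lem:LntoL}. The scalar term satisfies $\nu_n(B_{0,r}^c)\to\mu(B_{0,r}^c)$ after a possible slight reduction of $r$ to some continuity value $r'\le r$, as performed in the proof of Lemma~\ref{lem:cv_product}; this is harmless because $f$ still vanishes on $\cT\times B_{0,r'}$ and the extra mass $\nu_n(B_{0,r}\setminus B_{0,r'})$ is a deterministic constant that can be tracked separately. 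The functional term converges uniformly on $\cT$ and is uniformly bounded by Lemma~\ref{lemunifL}. Combining, $\phi_n\to\phi$ uniformly on $\cT$ and there is a constant $C$ with $\|\phi_n\|_\infty\le C$ for all $n$.

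Lifting this to the product is immediate through the telescoping estimate
\[
\Bigl|\prod_{i=1}^{k+1}\phi_n(t_i)-\prod_{i=1}^{k+1}\phi(t_i)\Bigr|\le (k+1)\,C^{k}\,\|\phi_n-\phi\|_\infty,
\]
which delivers the uniform convergence of $L_{k+1,n}f$ to $L_{k+1}f$ on $\cT^{k+1}$; the uniform bound $|L_{k+1,n}f|\le C^{k+1}$ follows in the same way. The only genuinely delicate ingredient is the uniform convergence $L_nf\to Lf$ already established in Lemma~\ref{lemunifL} via Arzel\`a--Ascoli (hence the local compactness hypothesis on $\cT$); no new obstacle arises in the present reduction beyond the bookkeeping around the choice of $r$.
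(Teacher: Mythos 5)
Your argument is correct, and it fills in a proof that the paper essentially omits: the paper's own treatment of this lemma consists of the remark that the case $k=0$ reduces to Lemma~\ref{lemunifL} and that the case $k\geq 1$ ``follows the same lines,'' which implicitly means re-running the pointwise-convergence-plus-equicontinuity-plus-Arzel\`a--Ascoli argument on the product space $\cT^{k+1}$. Your route is different and cleaner at the lifting step: you exploit the tensor-product structure of the integrand to factorize $L_{k+1,n}f(t_1,\ldots,t_{k+1})=\prod_i\phi_n(t_i)$, reduce everything to the one-dimensional statement via the identity $\phi_n=\nu_n(B_{0,r}^c)-L_nf$ (valid because $1-\rme^{-f}$ vanishes on $B_{0,r}$), and then propagate uniform convergence and boundedness to the product by a telescoping bound. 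This avoids redoing Arzel\`a--Ascoli in higher dimensions entirely; the only place local compactness of $\cT$ enters is through the already-established Lemma~\ref{lemunifL}, exactly as in the paper.

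One small point deserves care. The convergence of the scalar term $\nu_n(B_{0,r}^c)\to\mu(B_{0,r}^c)$ requires $B_{0,r}^c$ to be a $\mu$-continuity set, and your proposed fix --- shrinking $r$ to a continuity radius $r'$ --- does not by itself resolve this, since the correction term $\nu_n(B_{0,r}\setminus B_{0,r'})$ you would need to track converges only if $r$ itself is a continuity radius. The lemma is in fact only invoked in the proof of Theorem~\ref{thm:RV-PP3} with $r$ fixed so that $\mu(\partial B_{0,r}^c)=0$, so in context this is harmless (and the paper's own statement of the lemma is silent on the same point), but it would be cleaner to state the hypothesis $\mu(\partial B_{0,r}^c)=0$ explicitly rather than to suggest the radius can be adjusted.
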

\begin{proof}
When $k=0$, Lemma~\ref{lemtodo1} reduces to Lemma~\ref{lemunifL}. The proof of the case $k\geq 1$ follows the same lines and is omitted for the sake of brevity.
\end{proof}

\begin{lemma}\label{lemtodo2}
Assume that condition~\eqref{Psi_triangle} holds. Then, for $k\geq 1$, the $k$-th factorial power of $\Psi_n$ defined by
\[
\Psi_n^{(k)}=\sum_{1\leq i_1\neq \ldots \neq i_k\leq N_n} \varepsilon_{(T_{i_1}^n,\ldots,T_{i_k}^n)} 
\]
satisfies
\begin{equation}\label{eq:lem-todo0}
m_n^{-k}\Psi_n^{(k)}\stackrel{\mathbb{M}_b(\cT^k)}\longrightarrow \lambda^{\otimes k} \quad \mbox{in probability as $n\to\infty$.}
\end{equation}
Furthermore, if  $(N_n/m_n)^{k}$ is uniformly integrable, then the normalized factorial moment measure  of order $k$ of $\Psi_n$ denoted by $M_{k}^{\Psi_n}$ satisfies 
\begin{equation}\label{eq:lem-todo}
m_n^{-k}M_{k}^{\Psi_n}
\longrightarrow \lambda^{\otimes k}\quad 
\mbox{in $\mathbb{M}_b(\cT^{k})$ as $n\to\infty$.}
\end{equation}
\end{lemma}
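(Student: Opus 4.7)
The plan is to reduce everything to the behavior of the tensor power $\Psi_n^{\otimes k}$. The factorial power $\Psi_n^{(k)}$ differs from $\Psi_n^{\otimes k}$ only by a ``diagonal correction''
\[
\Delta_n := \Psi_n^{\otimes k} - \Psi_n^{(k)} = \sum_{(i_1,\ldots,i_k)\,:\,\exists\, j\neq l,\ i_j=i_l} \varepsilon_{(T_{i_1}^n,\ldots,T_{i_k}^n)},
\]
a positive measure supported on the union of diagonals $\{t_i=t_j\}$ with total mass $N_n^k-N_n^{[k]}\leq \binom{k}{2}N_n^{k-1}$. The strategy is to handle $\Psi_n^{\otimes k}$ via continuous mapping and \eqref{Psi_triangle}, and to show that $\Delta_n$ is negligible at the correct scale.

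For \eqref{eq:lem-todo0}, I would first recall that the tensor-product map $\mu\mapsto \mu^{\otimes k}$ is continuous from $\bbM_b(\cT)$ into $\bbM_b(\cT^k)$: this is a standard fact obtained by testing weak convergence against finite linear combinations of product functions $f_1(t_1)\cdots f_k(t_k)$ with $f_i\in\mathcal{C}_b(\cT)$, which are dense in $\mathcal{C}_b(\cT^k)$ via a Stone-Weierstrass argument (it is also a special case of Lemma~\ref{lem:cv_product}). Applying the continuous mapping theorem for convergence in probability to \eqref{Psi_triangle} yields $m_n^{-k}\Psi_n^{\otimes k} = (m_n^{-1}\Psi_n)^{\otimes k} \to \lambda^{\otimes k}$ in probability in $\bbM_b(\cT^k)$. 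For the diagonal correction, the total variation satisfies $\|m_n^{-k}\Delta_n\|_{TV}\leq C\, m_n^{-k}N_n^{k-1} = C\, m_n^{-1}(N_n/m_n)^{k-1}$; since $\bbE[N_n/m_n]=1$ the family $(N_n/m_n)$ is tight, hence so is $(N_n/m_n)^{k-1}$, and multiplication by $m_n^{-1}\to 0$ forces $m_n^{-k}\Delta_n\to 0$ in probability (even in total variation). Combining the two contributions gives \eqref{eq:lem-todo0}.

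For \eqref{eq:lem-todo}, I would pick any $f\in\mathcal{C}_b(\cT^k)$ and observe that by part~\eqref{eq:lem-todo0},
\[
m_n^{-k}\Psi_n^{(k)}(f)\;\longrightarrow\;\lambda^{\otimes k}(f)\quad \text{in probability.}
\]
The pointwise bound $|m_n^{-k}\Psi_n^{(k)}(f)|\leq \|f\|_\infty\, N_n^{[k]}/m_n^k\leq \|f\|_\infty\,(N_n/m_n)^k$ combined with the uniform integrability of $(N_n/m_n)^k$ gives uniform integrability of the real-valued sequence $m_n^{-k}\Psi_n^{(k)}(f)$. Convergence in probability plus uniform integrability yields convergence of expectations, so $m_n^{-k}M_k^{\Psi_n}(f)=\bbE[m_n^{-k}\Psi_n^{(k)}(f)]\to \lambda^{\otimes k}(f)$, which is exactly weak convergence in $\bbM_b(\cT^k)$. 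I do not expect any real obstacle here: the one point requiring care is the continuity of the tensor-product map (for which a direct Stone-Weierstrass check works), and the uniform integrability argument in the second part is then a routine application of Vitali's convergence theorem.
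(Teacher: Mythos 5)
Your proof is correct and follows essentially the same route as the paper's: the same decomposition of $\Psi_n^{(k)}$ into $\Psi_n^{\otimes k}$ minus a diagonal remainder of total mass at most $\binom{k}{2}N_n^{k-1}$, continuity of the tensor power applied to \eqref{Psi_triangle}, and for the second part the identical uniform-integrability argument for $m_n^{-k}\Psi_n^{(k)}(f)$. The only (immaterial) difference is that you control the remainder via tightness of $N_n/m_n$ from $\bbE[N_n/m_n]=1$, whereas the paper uses $N_n/m_n\to 1$ in probability; both are valid.
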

\begin{proof}
Note that
\[
\Psi_n^{(k)}=\Psi_n^{\otimes k}-\sum \varepsilon_{(T_{i_1}^n,\ldots,T_{i_k}^n)} 
\]
where the sum runs over $k$-tuples $1\leq i_1, \ldots ,i_k\leq N_n$ that are not pairwise distinct. There are at most ${ k \choose 2}N_n^{k-1}$ such indices and this is thus a bound for the total  mass of the sum. Normalizing by $m_n^{-k}$ we deduce
\begin{equation}\label{eq:sum}
m_n^{-k}\Psi_n^{(k)}=(m_n^{-1}\Psi_n)^{\otimes k} - \mathrm{remainder}.
\end{equation}
The remainder is a measure with total variation bounded by  ${ k \choose 2}N_n^{k-1}/m_n^k$ which tends to zero in probability because $m_n\to\infty$ and Equation~\eqref{Psi_triangle} implies $N_n/m_n\to 1$ in probability. On the other hand, by continuity of the tensor $k$-th power of measures, Equation~\eqref{Psi_triangle}  implies $(m_n^{-1}\Psi_n)^{\otimes k}\stackrel{\mathbb{M}_b(\cT^k)}\longrightarrow \lambda^{\otimes k}$ in probability. The convergence~\eqref{eq:lem-todo0} then follows from Equation~\eqref{eq:sum}. 

Next we prove the convergence \eqref{eq:lem-todo}. Let $f:\cT^k\to\mathbb{R}$ be bounded continuous. By definition of the  $k$-th factorial moment measure, $M_{k}^{\Psi_n}[f]=\mathbb{E}[\Psi_n^{(k)}(f)]$ 
and Equation~\eqref{eq:lem-todo0} implies that $m_n^{-k}\Psi_n^{k}(f)\to \lambda^{\otimes k}(f)$ in probability. Furthermore, if $f$ is bounded by $M$ in absolute value, then $m_n^{-k}\Psi_n^{(k)}(f)$ is bounded by $M(N_n/m_n)^k$ and is hence uniformly integrable. The convergence of expectations 
\[
m_n^{-k}M_{k}^{\Psi_n}[f]=\mathbb{E}[m_n^{-k}\Psi_n^{(k)}(f)]\to \lambda^{\otimes k}(f)
\]
follows, proving the weak convergence \eqref{eq:lem-todo}.
\end{proof}

\begin{proof}[Proof of Theorem~\ref{thm:RV-PP3}, case $k\geq 1$] 
We fix $r>0$ such that $\mu(\partial B_{0,r}^c)=0$  and a bounded continuous function $f:  \cT\times \cX \mapsto [0,\infty)$ vanishing on $F^r=\cT\times B_{0,r}$. For $k\geq 1$, we need to prove that
\begin{align}
&n^{k+1}\bbE\Big[\rme^{-\sum_{i=1}^{N_n}f(t_i,a_{nm_n}^{-1}X_i)} \mathds{1}_{\{\sum_{i=1}^{N_n}\varepsilon_{a_{nm_n}^{-1}X_i}(B_{0,r}^c)\ge k+1\}}\Big]\nonumber\\
\longrightarrow&\ \frac1{(k+1)!}\int_{E^{k+1}}\rme^{-\sum_{i=1}^{k+1}f(t_i, x_i)}\otimes_{i=1}^{k+1}\mathds{1}_{\{x_i\in  B_{0,r}^c\}} \lambda(\rmd t_i)\mu(\rmd x_i) \nonumber\\
=&\int_{\mathcal{N}}e^{-\pi(f)} \mathds{1}_{\{\pi(B_{0,r}^c)\geq k+1\}}\mu^*_{k+1}(\rmd \pi)   \,.\label{eq:toprove0}
\end{align}
The proof follows the lines of the proof of Theorem~\ref{thm:RV-PP2}. We also use the fact that Equation~\eqref{Psi_triangle} implies the convergence $N_n/m_n\to 1$ in probability and hence the convergence  $N_n/m_n\to 1$ in $\mathbb{L}^{k+1}$ because of  uniform integrability.

Due to independent marking, we have 
\begin{align*}
n^{k+1} \mathbb{P} \left( \sum_{i=1}^{N_n} \varepsilon_{a_{nm_n}^{-1}X_i }( B_{0,r}^c) \geq k+1\right) = n^{k+1} \mathbb{P} \left( S_{N_n} \geq k+1\right)
\end{align*}
where, conditionally on $N_n$, $S_{N_n}$ has a binomial distribution with parameter $(N_n,\nu(a_{nm_n}B_{0,r}^c))$. We have the basic decomposition
\[
n^{k+1}\bbP(S_{N_n}\geq k+1)= n^{k+1}\bbP(S_{N_n}= k+1)+n^{k+1}\bbP(S_{N_n}\geq k+2).
\]
The second term is controlled by Lemma~\ref{lem:binomial} and the uniform integrability of $(N_n/m_n)^{k+1}$ as follows: 
\begin{align*}
n^{k+1}\bbP(S_{N_n}\geq k+2)&\leq n^{k+1}\mathbb{E}\left[\min\left(\nu(a_{nm_n}B_{0,r}^c)^{k+1}{ N_n \choose k+1},\nu(a_{nm_n}B_{0,r}^c)^{k+2}{ N_n \choose k+2} \right)\right]\\
&\leq \frac{1}{(k+1)!} \mathbb{E}\left[\min\left(n^{k+1}\nu(a_{nm_n}B_{0,r}^c)^{k+1}N_n^{k+1},n^{k+1}\nu(a_{nm_n}B_{0,r}^c)^{k+2}N_n^{k+2}\right)\right]\\
&=o(1).
\end{align*}
The first term has a non vanishing limit: using $\nu(a_{nm_n}B_{0,r}^c) \sim (nm_n)^{-1}\mu(B_{0,r}^c)$, the convergence $N_n/m_n\to 1$  in  $\mathbb{L}^{k+1}$, the convergence  $\nu(a_{nm_n}B_{0,r})^{N_n-k-1}\to 1$ in probability with uniform bound $1$, we deduce 
\begin{align*}
n^{k+1}\bbP(S_{N_n}= k+1)&=n^{k+1} \nu(a_{nm_n}B_{0,r}^c)^{k+1} \mathbb{E}\left[\nu(a_{nm_n}B_{0,r})^{N_n-k-1}{ N_n \choose k+1}\right]\\
&\longrightarrow  \frac{ \mu(B_{0,r}^c)^{k+1}}{(k+1)!}\quad \mbox{as $n\to\infty$}.
\end{align*}
We deduce that the left hand side of Equation~\eqref{eq:toprove0} satisfies
\begin{align*}
&n^{k+1}\bbE\Big[\rme^{-\sum_{i=1}^{N_n}f(T_i^n,a_n^{-1}X_i)} \mathds{1}_{\{\sum_{i=1}^{N_n}\varepsilon_{a_{nm_n}^{-1}X_i}(B_{0,r}^c)\ge k+1\}}\Big]\nonumber\\
&= n^{k+1}\bbE\Big[\rme^{-\sum_{i=1}^{N_n}f(T_i^n,a_{nm_n}^{-1}X_i)} \mathds{1}_{\{\sum_{i=1}^{N_n}\varepsilon_{a_{nm_n}^{-1}X_i}(B_{0,r}^c)= k+1\}}\Big]+o(1).
\end{align*}
The event $\left\lbrace \sum_{i=1}^{N_n}\varepsilon_{a_{nm_n}^{-1}X_i}(B_{0,r}^c)= k+1 \right\rbrace$ can be decomposed into an union of ${ N_n \choose k+1}$ disjoints events indexed by a subset $I\subset \{ 1,\ldots,N_n\}$ of size $k+1$ such that $x_i\in a_{nm_n}B_{0,r}^c$ if $i\in I$ and $x_i\in a_{nm_n}B_{0,r}$ if $i\notin I$. We obtain
\begin{align*}
& n^{k+1}\bbE\Big[\rme^{-\sum_{i=1}^{N_n}f(T_i^n,a_{nm_n}^{-1}X_i)} \mathds{1}_{\{\sum_{i=1}^{N_n}\varepsilon_{a_{nm_n}^{-1}X_i}(B_{0,r}^c)= k+1\}}\Big]\\
& = n^{k+1}\bbE\Big[\nu(a_{nm_n}B_{0,r})^{N_n-k-1}\sum_{I\subset \{ 1,\ldots,N_n\}}\rme^{-\sum_{i\in I}f(T_i^n,a_{nm_n}^{-1}X_i)} \mathds{1}_{\{X_i\in a_{nm_n}B_{0,r}^c,\,i\in I\}}\Big]\,.
\end{align*}
Conditionally on $N_n$ and $(T_i^n)_{1\le i\le N_n}$, we have
\begin{align*}
n^{k+1}\bbE&\Big[\rme^{-\sum_{i\in I}f(T_i^n,a_{nm_n}^{-1}X_i)} \mathds{1}_{\{X_i\in a_{nm_n}B_{0,r}^c,\,i\in I\}}\mid N_n,(T_i^n)_{1\le i\le N_n},I\subset\{1,\ldots,N_n\}\Big]\\
&=n^{k+1}\int_{( a_{nm_n}B_{0,r}^c)^{k+1}}\rme^{-\sum_{i\in I}f(T_i^n,a_{nm_n}^{-1}x_i)} \nu(dx_{i_1})\cdots \nu(dx_{i_{k+1}})\\
&= m_n^{-(k+1)}\int_{(  B_{0,r}^c)^{k+1}}\rme^{-\sum_{i\in I}f(T_i^n, x_i)} \nu_{nm_n}(dx_{i_1})\cdots \nu_{nm_n}(dx_{i_{k+1}})\\
&= m_n^{-(k+1)}(L_{k+1,nm_n}f)(T_{i_1}^n,\ldots,T_{i_k}^n)\;,
\end{align*}
with $L_{k+1,nm_n}f$ defined in Lemma~\ref{lemtodo1} and the notation $I=\{i_1,\ldots,i_k\}$.  We obtain 
\begin{align}
n^{k+1}\bbE&\Big[\rme^{-\sum_{i=1}^{N_n}f(T_i^n,a_{nm_n}^{-1}X_i)} \mathds{1}_{\{\sum_{i=1}^{N_n}\varepsilon_{a_{nm_n}^{-1}X_i}(B_{0,r}^c)= k+1\}}\mid N_n,(T_i^n)_{1\le i\le N_n}\Big]\nonumber\\
&= \frac{\nu(a_{nm_n}B_{0,r})^{N_n-k-1}}{(k+1)!m_n^{k+1}}\sum_{1\leq i_1\neq\ldots\neq i_{k+1}\leq N_{n}}(L_{k+1,nm_n}f)(T_{i_1}^n,\ldots,T_{i_k}^n)\nonumber\\
%&\sim \frac{1}{(k+1)!}\bbE\Big[\int (L_{k+1,nm_n}f)(t)m_n^{-(k+1)}\sum_{1\leq i_1\neq\ldots\neq i_{k+1}\leq N_{n}\varepsilon_{(T_{i_1}^n,\ldots,T_{i_{k+1}}^n)}(\rmd t)\Big]\\
&=\frac{\nu(a_{nm_n}B_{0,r})^{N_n-k-1}}{(k+1)!}\int (L_{k+1,nm_n}f)(t)m_n^{-(k+1)}{\Psi_n^{(k+1)}}(dt)\,.\label{eq:proof00}
\end{align}
We have already seen that $\nu(a_{nm_n}B_{0,r})^{N_n-k-1}\to 1$ in probability and . On the other hand, Lemma \ref{lemtodo2} states that  $m_n^{-(k+1)}\Psi_n^{(k+1)}\stackrel{\mathbb{M}_b(\cT^{k+1})}\to \lambda^{\otimes (k+1)}$ in probability and  Lemma \ref{lemtodo1} states that  $L_{k+1,nm_n}f\to L_{k+1}f$ uniformly on $\cT^{k+1}$ with $L_{k+1}f$ continuous.
We deduce that the right hand side of Equation~\eqref{eq:proof00} converges in probability to $\frac{1}{(k+1)!}\int (L_{k+1}f)(t)\lambda^{\otimes(k+1)}(dt)$. The  uniformly integrable upper bound of order $(N_n/m_n)^{k+1}$ implies the convergence of the expectations, that is
\begin{align*}
n^{k+1}\bbE&\Big[\rme^{-\sum_{i=1}^{N_n}f(T_i^n,a_{nm_n}^{-1}X_i)} \mathds{1}_{\{\sum_{i=1}^{N_n}\varepsilon_{a_{nm_n}^{-1}X_i}(B_{0,r}^c)= k+1\}}\Big]\\
&\to \frac{1}{(k+1)!}\int (L_{k+1}f)(t)\lambda^{\otimes(k+1)}(dt)\\
&= \frac1{(k+1)!}\int_{E^{k+1}}\rme^{-\sum_{i=1}^{k+1}f(t_i, x_i)}\otimes_{i=1}^{k+1}\mathds{1}_{\{x_i\in  B_{0,r}^c\}} \lambda(\rmd t_i)\mu(\rmd x_i).
\end{align*}
This proves Equation~\eqref{eq:toprove0} and concludes the proof.
\end{proof}

\section{Proofs related to Section~\ref{sec:app}}\label{sec:proof3}
\subsection{Proofs related to Section~\ref{sec:app1}}
The following lemma gives an explicit expression for the distance in the Skorokhod space $\bbD=\bbD([0,T],\mathbb{R})$ to the cone $\bbD_k$ of functions with at most $k$ discontinuity points. We first introduce some notation. A c\`ad-l\`ag function $x\in\bbD$ has at most countably many discontinuity points $(t_i)_{i\in I}$ with size $|x(t_i)-x(t_i^-)|$ and for every $\varepsilon>0$, the number of jumps with size larger than $\varepsilon$ is finite - for a discontinuity point $t_i$, $i\in I$, the notation $x(t_i^-)$ stands for the left limit of $x$ at $t_i$. Reordering the sequence of jump sizes $|x(t_i)-x(t_i^-)|$, $i\in I$, we define the non-negative sequence 
\[
\Delta_1(x)\geq \Delta_2 (x)\geq \Delta_3 (x) \geq \cdots
\]
That is $\Delta_1 (x)$ is the largest jump (in absolute value), $\Delta_2 (x)$ the second largest jump, etc. If $x$ has finitely many jumps, say $k\geq 0$, we set $\Delta_m (x)=0$ for $m\geq k+1$. It follows that $x\in \bbD_k$ if and only if $\Delta_{k+1} (x)=0$. 
%For each $k\geq 0$, the mapping $x\mapsto\Delta_k (x)$ is continuous on $\bbD$.
\begin{lemma}\label{lem:dist-Dk}
For all $k\geq 0$ and $x\in\bbD$, $d(x,\bbD_k)=\frac{1}{2}\Delta_{k+1} (x)$.
\end{lemma}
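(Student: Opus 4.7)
The plan is to prove the equality $d(x,\mathbb{D}_k)=\tfrac{1}{2}\Delta_{k+1}(x)$ by establishing the two inequalities separately. The lower bound $d(x,\mathbb{D}_k)\ge \tfrac{1}{2}\Delta_{k+1}(x)$ follows from a pigeonhole argument combined with the triangle inequality. Fix any $y\in\mathbb{D}_k$ and any continuous strictly increasing bijection $\lambda$ of $[0,T]$ appearing in the definition of the Skorokhod metric. Since $\lambda$ is bicontinuous, $y\circ\lambda$ has exactly the same jump locations as $y$ (pulled back by $\lambda^{-1}$) and identical jump sizes, hence at most $k$ jumps. Let $s_1,\ldots,s_{k+1}$ be distinct jump times of $x$ with $|\Delta x(s_i)|\ge \Delta_{k+1}(x)$. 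By pigeonhole there exists $i^\star$ at which $y\circ\lambda$ is continuous, so $(y\circ\lambda)(s_{i^\star}^-)=(y\circ\lambda)(s_{i^\star})$. The triangle inequality then yields
\[
\Delta_{k+1}(x)\le|x(s_{i^\star}^-)-(y\circ\lambda)(s_{i^\star}^-)|+|(y\circ\lambda)(s_{i^\star})-x(s_{i^\star})|\le 2\,\|x-y\circ\lambda\|_\infty,
\]
from which $d(x,y)\ge\tfrac{1}{2}\Delta_{k+1}(x)$, and the lower bound follows by taking the infimum over $y\in\mathbb{D}_k$.

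For the upper bound $d(x,\mathbb{D}_k)\le \tfrac{1}{2}\Delta_{k+1}(x)$, the plan is to construct, for every $\eta>0$, an element $y_\eta\in\mathbb{D}_k$ with $\|x-y_\eta\|_\infty\le \tfrac{1}{2}\Delta_{k+1}(x)+\eta$; since the Skorokhod metric is dominated by the uniform metric (take $\lambda=\mathrm{id}$), the desired bound follows on letting $\eta\to 0$. Set $\Delta=\Delta_{k+1}(x)$ and let $t_1,\ldots,t_m$ with $m\le k$ enumerate the locations of the top $k$ largest jumps of $x$, so all remaining jumps have size at most $\Delta$. Around each remaining jump location $s$ choose a small interval $(s-\delta_s,s+\delta_s)$ in such a way that these intervals are pairwise disjoint and disjoint from $\{t_1,\ldots,t_m\}$; this is possible because only finitely many jumps of $x$ exceed any positive threshold. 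Define $y_\eta$ to coincide with $x$ outside the union of these intervals, and on each $(s-\delta_s,s+\delta_s)$ to be the piecewise affine continuous curve interpolating $x(s-\delta_s)$, the midpoint $(x(s^-)+x(s))/2$ at $s$, and $x(s+\delta_s)$. Then $y_\eta\in\mathbb{D}_k$ since the only jumps it retains are at $t_1,\ldots,t_m$, and a direct check on each modification interval gives $|x(t)-y_\eta(t)|\le |\Delta x(s)|/2+\omega_s$, where $\omega_s$ is the local oscillation of $x$ on $(s-\delta_s,s+\delta_s)\setminus\{s\}$. Choosing the $\delta_s$ sufficiently small uniformly in $s$ forces $\sup_s\omega_s\le \eta$, yielding $\|x-y_\eta\|_\infty\le \Delta/2+\eta$.

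The main obstacle is the upper bound in the presence of accumulating small jumps of $x$, in particular near a retained jump location $t_i$ or in a Cantor-like pattern, which prevents the modification intervals from being chosen pairwise disjoint in a naive inductive manner. This is handled by enumerating the small jumps in decreasing order of jump size and choosing the widths $\delta_s$ to shrink fast enough so that each new interval fits into the complement of those selected earlier; the key point is that for every $\varepsilon>0$ only finitely many jumps of $x$ have size exceeding $\varepsilon$, which ensures the inductive construction can be carried through and that the resulting $y_\eta$ is a well-defined c\`adl\`ag function in $\mathbb{D}_k$.
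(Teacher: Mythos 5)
Your lower bound is correct. It is a slightly different route from the paper's: the paper first shows $d(x,\mathbb{D}_k)=\inf_{y\in\mathbb{D}_k}\|x-y\|_\infty$ using that $y\mapsto y\circ\lambda$ is a bijection of $\mathbb{D}_k$, and then decomposes $y=j+c$ with $j$ a pure jump function with at most $k$ jumps; your pigeonhole argument applied directly to $y\circ\lambda$ avoids the decomposition and is perfectly valid (the passage to left limits under the sup norm is legitimate).

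The upper bound, however, has a genuine gap. Your construction assigns to every jump of $x$ outside the retained top $k$ its own open interval, and requires these intervals to be pairwise disjoint. This is impossible whenever the jump times of $x$ accumulate, and in particular when they are dense in $[0,T]$ --- e.g.\ $x=\sum_{n\geq 1}2^{-n}\mathds{1}_{[q_n,T]}$ with $(q_n)$ an enumeration of $\mathbb{Q}\cap[0,T]$, a bona fide element of $\mathbb{D}$. For such an $x$, any open interval $I_s$ placed around a jump $s$ contains infinitely many other jump times, and no interval around any of those can be disjoint from $I_s$; your proposed fix (enumerating jumps in decreasing order of size and shrinking the widths) does not escape this, because a smaller jump lying strictly inside an earlier interval can never receive a disjoint interval of its own. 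Nor can such jumps simply be skipped: any jump left outside all modification intervals survives unaltered in $y_\eta$, so $y_\eta$ has infinitely many jumps and is not in $\mathbb{D}_k$. (Absorbing them into the interpolation on the enclosing interval is partially salvageable via the side oscillations, but the jumps of size $\leq\eta$ lying outside \emph{every} modification interval remain the obstruction.) The paper sidesteps all of this with a global smoothing: after subtracting a pure jump function $j$ killing the $k$ largest jumps, it approximates $x-j$ by the convolutions $(x-j)\ast f_n$ with uniform kernels of shrinking bandwidth, which are continuous and treat all remaining jumps simultaneously, splitting each one in half without requiring any disjointness of neighborhoods. Replacing your local interpolation by such a convolution (or by a piecewise construction based on a finite partition on whose cells the oscillation of $x-j$ is controlled) closes the gap.
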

\begin{proof}
We first consider the case $k=0$ when $\bbD_0$ is the space of continuous functions. For $x\in\bbD$ and $y\in\mathbb{D}_0$, we prove  that $d(x,y)\geq \frac{1}{2}\Delta_1(x)$. We  observe that
\[
d(x,\bbD_0)=\inf_{y\in\bbD_0}d(x,y)=\inf_{y\in\bbD_0}\|x-y\|_\infty
\]
with $\|\cdot\|_\infty$ the uniform norm for bounded functions on $[0,T]$. To see this, we recall that the Skorokhod distance is defined by
\[
d(x,y)=\inf_{\lambda \in\Lambda} \max(\|\lambda - \mathrm{Id}\|_\infty,\|x-y\circ \lambda\|_\infty)
\]
where $\Lambda$ denotes the set of increasing bi-continuous bijections $\lambda:[0,T]\to [0,T]$ and $\mathrm{Id}$ the identity function on $[0,T]$. Since $y\mapsto y\circ \lambda$ is a bijection on $\bbD_0$, we have
\begin{align*}
d(x,\bbD_0)&=\inf_{y\in\bbD_0}\inf_{\lambda \in\Lambda}\max(\|\lambda - \mathrm{Id}\|_\infty,\|x-y\circ \lambda\|_\infty)\\
&=\inf_{y\in\bbD_0}\inf_{\lambda \in\Lambda}\max(\|\lambda - \mathrm{Id}\|_\infty,\|x-y\|_\infty)=\inf_{y\in\bbD_0}\|x-y\|_\infty.
\end{align*}
For  $t_1\in[0,T]$ such that $\Delta_1(x)=|x(t_1)-x(t_1^-)|$ and $y\in\bbD_0$, we have
\[
\|x-y\|_\infty\geq \max(|x(t_1)-y(t_1)|,|x(t_1^-)-y(t_1^-)|)\geq \frac{1}{2}\Delta_1(x).
\] 
Taking the infimum over $y\in\bbD_0$, we deduce $d(x,\bbD_0)\geq \frac{1}{2}\Delta_1(x)$. 

For the reverse inequality, it is enough to exhibit a sequence of continuous functions $y_n$ such that 
\begin{equation}\label{eq:dist-DK_conv}
d(x,y_n)=\|x-y_n\|_\infty \to \frac{1}{2}\Delta_1(x).
\end{equation}
A simple construction is \textit{via}   convolution: define $y_n= x\ast f_n$ where $\ast$ is the convolution operator and $f_n$ is the density of the uniform distribution on $[-1/n,1/n]$, first extending the definition of $x$  by letting $x(t)=x(0)$ for $t\leq 0$ and $x(t)=x(T)$ for $t\geq T$. Then, Equation~\eqref{eq:dist-DK_conv} is satisfied, proving $d(x,\bbD_0)\leq \frac{1}{2}\Delta_1(x)$.

We next consider the case $k\geq 1$. Since the mapping $y\in\bbD_k\mapsto y\circ\lambda\in\bbD_k$ is bijective, we have
\[
d(x,\bbD_k)=\inf_{y\in\bbD_k}d(x,y)=\inf_{y\in\bbD_k}\|x-y\|_\infty
\]
Any function $y\in\bbD_k$ can be decomposed as $y=j+c$ where $j$ is a pure jump function with at most $k$ jumps and $c$ is a continuous function. It follows
\[
\|x-y\|_\infty=\|x-j-c\|_\infty= \frac{1}{2}\Delta_{1}(x-j),
\]
where the last equality relies on the case $k=0$. Since $j$ has at most $k$ jumps, the functions $x$ and $x-j$ share the same discontinuity points except at most $k$ of them. This implies $\Delta_{1}(x-j)\geq \Delta_{k+1}(x)$ with equality if $j$ kills the $k$ largest jumps of $x$. Hence 
\[
\|x-y\|_\infty= \frac{1}{2}\Delta_{1}(x-j)\geq \frac{1}{2}\Delta_{k+1}(x).
\]
Taking the infimum for $y\in \bbD_k$, we get $d(x,\bbD_k)\geq \frac{1}{2}\Delta_{k+1}(x)$. The reverse inequality is proven taking $y=c+j$ with $j$  killing exactly the $k$ largest jumps of $x$ so that
\[
d(x,\bbD_k)\leq \|x-y\|_\infty= \frac{1}{2}\Delta_{1}(x-j)=\frac{1}{2}\Delta_{k+1}(x).
\]
\end{proof}

\begin{proof}[Proof of Theorem~\ref{theo:RV-D}]  
Consider the rescaled risk process
\[
R_n^0(t)= \sum_{i=1}^N a_n^{-1}X_i \mathds{1}_{\{T_i\leq t\}}, \ \ \ t\in [0,T],
\]
and, for $\delta>0$, the  truncated rescaled risk process 
\begin{align}\label{truncatedprocess}
R_n^\delta(t)= \sum_{i=1}^N a_n^{-1}X_i \mathds{1}_{\{a_n^{-1}X_i >  \delta\}}\mathds{1}_{\{T_i\leq t\}}, \ \ \ t\in [0,T].
\end{align}
The proof involves the following three steps, corresponding to conditions $i)-iii)$ of Proposition~\ref{prop:HRV-criterion}: 
\begin{enumerate}
\item[1)] Using the  continuous mapping theorem \citep[Theorem 2.3]{LRR14}, we show that $n^{k+1}\bbP(R_n^\delta \in \cdot)\longrightarrow  \hat \mu_{k+1}^\delta(\cdot)$ in $\bbM(\mathbb{D}\setminus \mathbb{D}_k)$, with  limit measure  
\[
\mu^{\# \delta}_{k+1}(B)=\int_{E^{k+1}} \mathds{1}_{\left\{\sum_{i=1}^{k+1}z_{(t_i,x_i)}\in B\right\}} M_{k+1}(\rmd t_1,\ldots,\rmd t_{k+1})\otimes_{i=1}^{k+1}\mathds{1}_{\{x_i>\delta\}}\mu(\rmd x_i),
\]
where $z_{(t,x)}=(x\mathds{1}_{\{t \leq u\}})_{0\leq u\leq T}\in\mathbb{D}$.
\item[2)]  We prove that   $ \mu_{k+1}^{\#\delta}  \longrightarrow  \mu_{k+1}^{\#}$ in $\mathbb{M}(\mathbb{D}\setminus \mathbb{D}_k)$ as $\delta \to 0$.
\item[3)] We prove that $R_n^\delta$ and $R_n^0$ satisfy, for any $\varepsilon,r>0$, 
\begin{equation}\label{tail_cond}
\lim_{\delta\to 0}\limsup_{n\to \infty}n^{k+1}\bbP(d(R_n^0,R_n^\delta)>\varepsilon,d(R_n^0,\mathbb{D}_k)>r) = 0
\end{equation}
where $d$ denotes the Skorokhod metric on $\mathbb{D}$.
\end{enumerate} 
\noindent
Both conditions iii) of Proposition \ref{prop:HRV-criterion}  hold under the only condition\eqref{tail_cond} because $d(R_n^\delta,\mathbb{D}_k)\leq d(R_n^0,\mathbb{D}_k)$. Then the result $n^{k+1}\bbP(a_n^{-1}R \in \cdot)\longrightarrow   \mu_{k+1}^{\#}(\cdot)$ in $\bbM(\mathbb{D}\setminus \mathbb{D}_k)$ follows from Proposition \ref{prop:HRV-criterion} with $E=\mathbb{D}$, $F=\mathbb{D}_k$, $X=R=a_nR_n^0$, $X_{n,m}=a_nR_n^\delta$ and $m=[1/\delta]$.\newline

\textbf{Step 1}. For $\delta>0$, we have $R_n^\delta = T_\delta(a_n^{-1}\Pi)$ with  $T_\delta : \cN \longrightarrow \mathbb{D}$ the measurable mapping defined by 
$$T_\delta: \pi=\sum_{i\in I} \varepsilon_{(t_i,x_i)} \longmapsto \sum_{i\in I} \mathds{1}_{\{x_i >\delta\}} z_{(t_i,x_i)}.$$
Note that $I$ is  countable and that there are only finitely many points $x_i> \delta$.
Theorem \ref{thm:RV-PP2} together with the continuous mapping theorem \citep[Theorem 2.3]{LRR14} imply
$$n^{k+1}\mathbb{P}(R_n^\delta  \in \cdot)=n^{k+1} \mathbb{P}(T_\delta(a_n^{-1} \Pi) \in \cdot) \overset{\bbM(\mathbb{D}\setminus\mathbb{D}_k)}\longrightarrow \mu_{k+1}^* \circ T_\delta^{-1}(\cdot)=\mu_{k+1}^{\# \delta}(\cdot). $$
It remains to check that the conditions for the continuous mapping theorem are satisfied. First, note that $T_\delta^{-1}(\mathbb{D}_k) \subset \cN_k$ and that $T_\delta^{-1}(B)$ is bounded away from $\cN_k$ for all $B \in \mathcal{B}(\mathbb{D})$ bounded away from $\mathbb{D}_k$. Besides,  $T_\delta$ is continuous at every point $\pi$ such that $\pi([0,T] \times \{ \delta \})=0$, which can be proved with similar arguments as in the proof of Lemma 3.2 in \cite{EMD16}. It is easily seen that $\mu_{k+1}^*$ has no mass on $\{\pi([0,T] \times \{ \delta \})\neq 0\}$ so that the discontinuity set of $T_\delta$ has vanishing $\mu_{k+1}^*$-measure.

\textbf{Step 2}. It is a straightforward application of the monotone convergence Theorem since the indicator function $\mathds{1}_{\{x_i>\delta\}}$ converges monotonically to $\mathds{1}_{\{x_i>0\}}$ as $\delta\downarrow 0$.

\textbf{Step 3}. The Skorokhod distance between the risk process $R_n^0$ and its truncated version $R_n^\delta$ is upper bounded by
$$d(R_n^\delta,R_n^0) \leq \Vert R_n^0- R_n^\delta \Vert_{\infty}=\sum_{i=1}^N a_n^{-1}X_i \mathds{1}_{\{a_n^{-1}X_i \leq \delta \} }\le \delta N.$$ 
On the other hand, Lemma~\ref{lem:dist-Dk} implies 
$$
\{d(R_n^0,\mathbb{D}_k)>r\} = \{\Delta_{k+1}(R_n^0)>2r\} = \{(a_n^{-1}\Pi)([0,T]\times (2r,\infty))\ge k+1\}\,.
$$
We deduce 
$$
 \bbP\Big(d(R_n^\delta,R_n^0)>\varepsilon,d(R_n^0,\mathbb{D}_k)>r\Big)
 \le  \bbP\Big( \delta N>\varepsilon, (a_n^{-1}\Pi)([0,T]\times (2r,\infty))\ge k+1\Big)\,.
$$
Denote $S_{j,n}$ a random variable with binomial distribution with parameter $(j, \nu(2 a_n r,\infty))$. Conditioning on $N=j$ and applying Lemma \ref{lem:binomial},  we get the upper bound, for $\delta>0$ small enough so that $\varepsilon/\delta>k+1$,
$$
\sum_{j=\varepsilon/\delta}^\infty\bbP(S_{j,n}\ge k+1)\bbP(N=j)\le\sum_{j=\varepsilon/\delta}^\infty{j \choose k+1}\nu(2a_n r,\infty)^{k+1}\bbP(N=j).
$$
The convergence $\nu_n(2r,\infty)\to \mu(2r,\infty)$ implies the bound $\nu_n(2r,\infty)\leq H$ for some $H>0$, whence the uniform bound
\begin{align*}
n^{k+1}\bbP\Big(d(R_n^\delta,R_n^0)>\varepsilon,d(R_n^0,\mathbb{D}_k)>r\Big)
&\le \sum_{j=\varepsilon/\delta}^\infty{j \choose k+1} j^{k+1}\nu_n(2r,\infty)^{k+1}\bbP(N=j) \\
&\le \sum_{j=\varepsilon/\delta}^\infty \bbP(N=j)\frac{H^{k+1}}{(k+1)!} j^{k+1}\\
& \leq \frac{H^{k+1}}{(k+1)!} \bbE[N^{k+1}\mathds{1}_{\{N\ge \varepsilon/\delta\}}].
\end{align*}
The assumption that $\Psi$ has a finite factorial measure $M_{k+1}$ implies $\bbE[N^{k+1}]<\infty$ so that the right-hand side converges to $0$ as $\delta\to 0$, proving Equation \eqref{tail_cond}.
\end{proof}

\begin{proof}[Proof of Theorem~\ref{theo:RV-D2}]  
The proof is very similar to the proof of Theorem~\ref{theo:RV-D} with $R_n^0$ and $R_n^\delta$ replaced respectively by 
\[
\tilde R_n^0(t)= \sum_{i=1}^{m_n} a_{nm_n}^{-1}(X_i-c) \mathds{1}_{\{T_i^n\leq t\}}, \ \ \ t\in [0,T],
\]
and
\[
R_n^\delta(t)= \sum_{i=1}^{m_n} a_{nm_n}^{-1}X_i \mathds{1}_{\{a_{nm_n}^{-1}X_i >  \delta\}}\mathds{1}_{\{T_i^n\leq t\}}, \ \ \ t\in [0,T].
\]
Steps 1) and 2) are proved exactly in the same way but the proof of Step 3)  is more involved in the case of a triangular array. We now prove 
\begin{equation}\label{tail_cond2}
\lim_{\delta\to 0}\limsup_{n\to \infty}n^{k+1}\bbP(d(\tilde R_n^0,R_n^\delta)>\varepsilon,d(\tilde R_n^0,\mathbb{D}_k)>r) = 0.
\end{equation}
Recall that $d(\tilde R_n^0,\mathbb{D}_k)=\frac{1}{2}\Delta_{k+1}(\tilde R_n^0) $ and note that $ \Delta_{k+1}(\tilde R_n^0)\le  \Delta_{k+1}( R_n^0)\vee (a_{nm_n}^{-1}c)$.  Moreover, the Skorokhod distance between the risk process $\tilde R_n^0$ and $R_n^\delta$ is upper bounded by
\[
d( R_n^\delta,\tilde R_n^0)  \leq \Vert  R_n^\delta -\tilde R_n^0\Vert_{\infty} = \max_{1\le t\le m_n}\Big|\sum_{i=1}^{t} a_{nm_n}^{-1}(X_i\mathds{1}_{\{a_{nm_n}^{-1}X_i \leq \delta \}} -c) \Big|\,.
\]
We deduce 
\begin{align*}
&n^{k+1} \bbP\Big(d(R_n^\delta,\tilde{R}_n^0)>\varepsilon,d(\tilde{R}_n^0,\mathbb{D}_k)>r\Big)\\
 & \le  n^{k+1}  \bbP\Big(\max_{1\le t\le m_n}\Big|\sum_{i=1}^{t} a_{nm_n}^{-1}(X_i\mathds{1}_{\{a_{nm_n}^{-1}X_i \leq \delta \}} -c)\Big|>\varepsilon, \Delta_{k+1}( R_n^0)\vee (a_{nm_n}^{-1}c)>2r  \Big)\\
  &=  n^{k+1}\bbP\Big(\max_{1\le t\le m_n}\Big| \sum_{i=1}^{t}a_{nm_n}^{-1}(X_i\mathds{1}_{\{a_{nm_n}^{-1}X_i \leq \delta \}}-c) \Big|>\varepsilon  ,\Delta_{k+1}(R_n^0)>2r  \Big)\,.
\end{align*}
where the last equality holds for $n$ large enough so that $a_{nm_n}^{-1}c\leq 2r$.
Hence, Equation~\eqref{tail_cond2} is equivalent to
\begin{equation}\label{tail_cond3}
\lim_{\delta\to 0}\limsup_{n\to \infty}n^{k+1}p_n^\delta = 0
\end{equation}
with 
$$p_n^\delta=\bbP\Big(\max_{1\le t\le m_n}\Big| \sum_{i=1}^{t}a_{nm_n}^{-1}(X_i\mathds{1}_{\{a_{nm_n}^{-1}X_i \leq \delta \}}-c) \Big|>\varepsilon   ,\Delta_{k+1}(R_n^0)>2r  \Big).
$$
Note that  $\Delta_{k+1}(R_n^0)=a_{nm_n}^{-1}X_{m_n-k:m_n}$.
Introduce independent random variables $U_1,\ldots,U_{m_n}$ with uniform distribution on $[0,1]$ and their order statistics $U_{1:m_n} \leq\ldots\leq U_{m_n:m_n}$ and denote $F^{\leftarrow}$ the quantile function of $X_1$. By the inversion method, $(X_1,\ldots,X_{m_n})$ has the same distribution as $(F^\leftarrow(U_1),\ldots,F^\leftarrow(U_{m_n}))$ . Possibly changing the underlying probability space, we assume without loss of generality that $X_i=F^\leftarrow(U_i)$ and similarly $X_{i:m_n}=F^\leftarrow(U_{i:m_n})$, $1\leq i\leq m_n$. The conditioning event $\Delta_{k+1}(R_n^0)>2r$ is then equal to $U_{m_n-k,m_n}>F(2ra_{nm_n})$ and we have 
\[
p_n^\delta=\bbP\Big(\max_{1\le t\le m_n}\Big| \sum_{i=1}^{t}a_{nm_n}^{-1}(F^\leftarrow(U_{i}) \mathds{1}_{\{F^\leftarrow(U_{i}) \leq a_{nm_n}\delta \}}-c)\Big|>\varepsilon, U_{m_n-k:m_n}> F(2 ra_{nm_n})  \Big)
\]
For $\delta\leq 2r$, $F^\leftarrow(U_{m_n-k:m_n})> 2ra_{nm_n}$ implies $F^\leftarrow(U_{m_n+1-i:m_n})> a_{nm_n}\delta $ for $i=1,\ldots,k+1$ and the terms corresponding to the $k+1$ largest order statistics are equal to $ca_{nm_n}^{-1}$. Since there are at most $k+1$ of them, their contribution is at most $ (k+1)ca_{nm_n}^{-1}\to 0$ and is smaller than $\varepsilon/2$ for large $n$ so that the contributions of the terms corresponding to the $m_n-k-1$ smallest order statistics must be larger than $\varepsilon/2$. Denote by $\sigma(i)$ the  rank of observation $i$, i.e.  $U_{\sigma(i):m_n}=U_i$, $1\leq i\leq m_n$. There is a unique permutation $\sigma'$ of $\{1,\ldots,m_n-k-1\}$ such that the $m_n-k-1$ smallest order statistics appear in the same order in the sequences $(U_i)_{1\leq i\leq m_n}=(U_{\sigma(i):m_n})_{1\leq i\leq m_n}$ and $(U_{\sigma'(i):m_{n}})_{1\leq i\leq m_n-k-1}$. We obtain that $p_n^\delta$ is bounded from above by
\[
 \bbP\Big(\max_{1\le t\le m_n-k-1}\Big| \sum_{i=1}^{t}a_{nm_n}^{-1}(F^\leftarrow(U_{\sigma'(i):m_n}) \mathds{1}_{\{F^\leftarrow(U_{\sigma'(i):m_n}) \leq a_{nm_n}\delta \}}-c)\Big| >\varepsilon/2 ,U_{m_n-k:m_n}>u_n  \Big)
\]
with $u_n=F(2r a_{nm_n})$.  Conditionally on $U_{m_n-k:m_n}=u$, the vector $(U_{i:m_n})_{1\leq i\leq m_n-k-1}$ has the same distribution as the order statistic on an independent uniform sample on $[0,u]$ with size $m_n-k-1$. By exchangeability of $(U_1,\ldots,U_{m_n})$, the distribution of $ \sigma$ is independent of the order statistics $(U_{i:m_n})_{1\leq i\leq m_n}$ and uniform on the set of permutations of $\{1,\ldots,m_n\}$. It follows that the  permutation $\sigma'$ over $\{1,\ldots,m_n-k-1\}$ is uniform and independent of $(U_{i:m_n})_{1\leq i\leq m_n-k-1}$ and hence that $(V_i)_{1\leq i\leq m_n-k-1}=(U_{\sigma'(i):m_n-k-1}/u)_{1\leq i\leq m_n-k-1}$ has independent components  uniform on $[0,1]$. We deduce
\begin{align*}
&\bbP\Big(\max_{1\le t\le m_n-k-1}\Big|\sum_{i=1}^{t} a_{nm_n}^{-1} (F^\leftarrow(U_{\sigma'(i):m_n}) \mathds{1}_{\{F^\leftarrow(U_{\sigma'(i):m_n}) \leq a_{nm_n}\delta \}}-c)\Big|>\varepsilon/2 \Big| U_{m_n-k:m_n}=u  \Big)\\
&=\bbP\Big(\max_{1\le t\le m_n-k-1}\Big|\sum_{i=1}^{t} a_{nm_n}^{-1}(F^\leftarrow(uV_i)\mathds{1}_{\{F^\leftarrow(uV_i) \leq a_{nm_n}\delta \}}-c) \Big|>\varepsilon/2\Big)\\
&\leq 3\max_{1\le t\le m_n-k}\bbP\Big(\Big|\sum_{i=1}^{t} a_{nm_n}^{-1}(F^\leftarrow(uV_i) \mathds{1}_{\{F^\leftarrow(uV_i) \leq a_{nm_n}\delta \}}-c)\Big|>\varepsilon/6\Big)
\end{align*}  
where the last line follows from  Etemadi's inequality \citep{E85}. Integrating with respect to $U_{m_n-k:m_n}>F(2r a_{nm_n})$, we obtain the upper bound
\[
p_n^\delta \leq 3\mathbb{E}\left[ \pi_n^\delta\big(U_{m_n-k:m_n}\big)\mathds{1}_{\{U_{m_n-k:m_n}>F(2r a_{nm_n}) \}} \right]
\]
with
\[
\pi_n^\delta(u)=\max_{1\le t\le m_n-k}\bbP\Big(\Big|\sum_{i=1}^{t} a_{nm_n}^{-1}(F^\leftarrow(uV_i) \mathds{1}_{\{F^\leftarrow(uV_i) \leq a_{nm_n}\delta \}}-c)\Big|>\varepsilon/6\Big).
\]
In the following, we provide upper bounds for $\pi_n^\delta(u)$ and prove that
\begin{equation}\label{tail_cond4}
\lim_{\delta\to 0}\limsup_{n\to \infty}n^{k+1}\mathbb{E}\left[ \pi_n^\delta\big(U_{m_n-k:m_n}\big)\mathds{1}_{\{U_{m_n-k:m_n}>F(2r a_{nm_n}) \}} \right] = 0
\end{equation}
which implies Equation~\eqref{tail_cond3}. We classically have to distinguish four different cases. In each case, we will use the following Lemma.
\begin{lemma}\label{lem:last}
Let $X$ be a non-negative regularly varying random variable with index $\alpha>0$ and survival function $\bar{F}=1-F$. Then, for any $p>\alpha$, we have
\begin{align*}
\bbE\Big[\Big(\dfrac{X}{x}\Big)^p\mathds{1}_{\{X\le x \}}\Big]\sim \dfrac{\alpha}{p-\alpha} \bar F(x)\,,\qquad x\to \infty\,.
\end{align*}

\begin{proof}
The proof follows from Karamata's theorem; see for instance \citep[Theorem 1.6.4]{BGT89}, Equation (1.6.3). 
\end{proof}
\end{lemma}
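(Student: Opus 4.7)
The plan is to invoke Karamata's theorem on truncated moments of regularly varying functions, after rewriting the expectation in terms of an integral against the survival function via integration by parts.

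First, I would express the truncated $p$-th moment as
\[
\mathbb{E}[X^p \mathds{1}_{\{X \le x\}}] = \int_0^x y^p \, dF(y).
\]
Integrating by parts (using $dF = -d\bar F$ and the fact that $X \ge 0$ so $\bar F(0) = 1$), I get
\[
\mathbb{E}[X^p \mathds{1}_{\{X \le x\}}] = -x^p \bar F(x) + p \int_0^x y^{p-1} \bar F(y) \, dy,
\]
so that
\[
\mathbb{E}\!\left[\left(\frac{X}{x}\right)^p \mathds{1}_{\{X \le x\}}\right] = -\bar F(x) + \frac{p}{x^p} \int_0^x y^{p-1} \bar F(y) \, dy.
\]

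Next, since $\bar F$ is regularly varying at infinity with index $-\alpha$, Karamata's theorem (as cited, \citet[Theorem 1.6.4]{BGT89}) applied to $y \mapsto y^{p-1}\bar F(y)$, which is regularly varying with index $p-1-\alpha > -1$ by the hypothesis $p > \alpha$, yields
\[
\int_0^x y^{p-1} \bar F(y) \, dy \sim \frac{x^p \bar F(x)}{p - \alpha}, \quad x \to \infty.
\]
Plugging this in gives
\[
\mathbb{E}\!\left[\left(\frac{X}{x}\right)^p \mathds{1}_{\{X \le x\}}\right] \sim \left(\frac{p}{p-\alpha} - 1\right) \bar F(x) = \frac{\alpha}{p-\alpha} \bar F(x),
\]
which is the claimed equivalence.

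There is no real obstacle here: the only subtle point is checking that the integration by parts is valid (which is immediate from $X \ge 0$ and the fact that $x^p \bar F(x) \to 0$ is not needed since we only integrate up to $x$) and that the application of Karamata's theorem requires $p-1-\alpha > -1$, which is exactly the standing assumption $p > \alpha$. The rest is a direct calculation.
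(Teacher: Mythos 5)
Your proof is correct and is essentially the argument the paper has in mind: the paper simply cites Karamata's theorem via \citet[Theorem 1.6.4]{BGT89}, which is exactly the truncated-moment computation you carry out (integration by parts plus the direct half of Karamata applied to $y^{p-1}\bar F(y)$, regularly varying of index $p-1-\alpha>-1$). Your write-up just makes explicit what the citation leaves implicit.
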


\medskip\noindent
\textbf{Proof of Equation~\eqref{tail_cond4} in the case $\alpha<1$.}\\
Using Markov inequality, we provide an upper bound for $\pi_n^\delta(u)$ as follows: for any $1\le t\le m_n-k$ and $\varepsilon >0$, we have
\begin{align*}
&\bbP\Big( \left| \sum_{i=1}^{t} a_{nm_n}^{-1}(F^\leftarrow(uV_i)\mathds{1}_{\{F^\leftarrow(uV_i) \leq a_{nm_n}\delta \}}-c)\right| >\varepsilon/6\Big)\\
&\leq 6{\varepsilon}^{-1} \sum_{i=1}^{t}\mathbb{E}\Big[  a_{nm_n}^{-1}F^\leftarrow(uV_i) \mathds{1}_{\{F^\leftarrow(uV_i) \leq a_{nm_n}\delta\}}\Big]\\
&\leq  6{\varepsilon}^{-1} m_n\mathbb{E}\Big[  a_{nm_n}^{-1}F^\leftarrow(uV_1) \mathds{1}_{\{F^\leftarrow(uV_1) \leq a_{nm_n}\delta\}}\Big]\,. \end{align*}  
We use the fact that there are at most $m_n$ summands as well as the fact that the $V_i$'s are iid. Using the change of variable $x=F^\leftarrow(uv)$, we have
$$
\mathbb{E}\Big[ F^\leftarrow(uV_1) \mathds{1}_{\{F^\leftarrow(uV_1) \leq a_{nm_n}\delta\}}\Big]=\int_0^{F(a_{nm_n}\delta)/u} F^\leftarrow(uv)dv =u^{-1}\int_0^{ a_{nm_n}\delta }  xF(dx)\,.
$$
We recognize the expression $u^{-1}\bbE[X_1\mathds{1}_{X_1\le a_{nm_n}\delta}]$ on which we apply Lemma \ref{lem:last} with $p=1>\alpha$. We deduce that
\begin{align*}
\pi_n^\delta(u)&\leq   \dfrac6{\varepsilon u} m_n \delta\,\mathbb{E}\Big[  \dfrac{X_1}{\delta a_{nm_n}} \mathds{1}_{\{X_1 \leq a_{nm_n}\delta\}}\Big]\sim \dfrac6{\varepsilon u} m_n \delta\dfrac{\alpha}{1-\alpha}\bar F(a_{nm_n} \delta)=o(1)\,,
\end{align*}  
as $m_n\bar F(a_{nm_n \delta}) \sim \delta^{-\alpha}/n\to 0$ as $n\to \infty$.\\

\noindent
\textbf{Proof of Equation~\eqref{tail_cond4} in the case   $\mathbb{E}[X_1]<\infty$.}\\
When the variable $X_1$ is integrable, there is a common previous step consisting in centering the partial sums in Equation~\eqref{tail_cond4}. Let us denote $c_{n,\delta}(u)=\mathbb{E}[F^\leftarrow(uV_1) \mathds{1}_{\{F^\leftarrow(uV_1) \leq a_{nm_n}\delta\}} ] $ the centering term. Using  $c=\mathbb{E}[X_1]$ and $u>F(2ra_{nm_n})$, we obtain
\begin{align*}
c_{n,\delta}(u)-c& = u^{-1}\int_0^{F(a_{nm_n}\delta)} F^\leftarrow(v)dv -\int_0^{1} F^\leftarrow(v)dv \\
&=u^{-1}\int_{F(a_{nm_n}\delta)}^1 F^\leftarrow(v)dv +(u^{-1}-1)\int_0^{1} F^\leftarrow(v)dv\,.
\end{align*}
Using that for $n$ large enough we have $u^{-1}< F(2ra_{nm_n})^{-1} \le 2$ so that
\[
|c_{n,\delta}(u)-c|\le 2( \bbE[X_1\mathds{1}_{X_1>a_{nm_n}\delta}] +\bar F(a_{nm_n}\delta) \bbE[X_0])=O(a_{nm_n}\bar F(a_{nm_n}))\,,\qquad n\to\infty\,,
\]
by an application of Karamata's theorem. Thus $m_na_{nm_n}^{-1}|c_{n,\delta}-c|=O(n^{-1})$ uniformly over $u>F(2ra_{nm_n})$ as $n\to \infty$ and  Equation~\eqref{tail_cond3} is implied by
\begin{equation}\label{tail_cond4t}
\lim_{\delta\to 0}\limsup_{n\to \infty}n^{k+1}\mathbb{E}\left[ \tilde \pi_n^\delta\big(U_{m_n-k:m_n}\big)\mathds{1}_{\{U_{m_n-k:m_n}>F(2r a_{nm_n}) \}} \right] = 0
\end{equation}
with
\[
\tilde \pi_n^\delta(u)=\max_{1\le t\le m_n-k}\bbP\Big(\Big|\sum_{i=1}^{t} a_{nm_n}^{-1}(F^\leftarrow(uV_i) \mathds{1}_{\{F^\leftarrow(uV_i) \leq a_{nm_n}\delta \}}-c_{n,\delta}(u))\Big|>\varepsilon/7\Big).
\]

\noindent
\textbf{Proof of Equation~\eqref{tail_cond4t} in the case $1\leq \alpha<2$ with $\mathbb{E}[X_1]<\infty$.}\\
Applying Markov inequality of order $p=2$, we obtain for any $1 \leq t\le m_n-k$
\begin{align*}
&\bbP\left(\left| \sum_{i=1}^{t} a_{nm_n}^{-1}(F^\leftarrow(uV_i)\mathds{1}_{\{F^\leftarrow(uV_i) \leq a_{nm_n}\delta\}}-c_{n,\delta}(u)) \right|  >\varepsilon/7\right)\\
&\leq 49 \varepsilon^{-2} a_{nm_n}^{-2}\mathbb{E}\left[\left(\sum_{i=1}^{t}  (F^\leftarrow(uV_i) \mathds{1}_{\{F^\leftarrow(uV_i) \leq a_{nm_n}\delta\}}-c_{n,\delta}(u))\right)^2\right]\\
&\leq 49 \varepsilon^{-2}a_{nm_n}^{-2}  m_n \mathrm{Var}\Big(F^\leftarrow(uV_1) \mathds{1}_{\{F^\leftarrow(uV_1) \leq a_{nm_n}\delta\}} \Big).
\end{align*}  
Using the change of variable $x=F^\leftarrow(uv)$ and Lemma~\ref{lem:last} with $p=2>\alpha$, we have
\begin{align*}
\mathrm{Var}\Big(F^\leftarrow(uV_1) \mathds{1}_{\{F^\leftarrow(uV_1) \leq a_{nm_n}\delta\}} \Big)&\leq \int_0^{F(a_{nm_n}\delta)/u} F^\leftarrow(uv)^2dv =u^{-1}\bbE[X_1^2\mathds{1}_{\{X_1\le a_{nm_n}\delta\}}]\\
&=O\big(a_{nm_n}^2\bar F(a_{nm_n} )\big).
\end{align*}
We finally get
\begin{align*}
\tilde\pi_n^\delta(u)=O\big(m_n \bar F(a_{nm_n} )\big)=O(n^{-1})=o(1),\quad \mbox{as $n\to\infty$}.
\end{align*}  

\noindent
\textbf{Proof of Equation~\eqref{tail_cond4t} in the case $ \alpha\geq 2$ with $\mathrm{Var}(X_1)<\infty$.}\\
From the Fuk-Nagaev inequality, see  \citep{petrov1995} Equation (2.79) page 78, for any $p>\alpha$, we have
\begin{align*}
&\bbP\Big(\sum_{i=1}^{t} a_{nm_n}^{-1}(F^\leftarrow(uV_i)\mathds{1}_{\{F^\leftarrow(uV_i) \leq a_{nm_n}\delta \}}) -c>\varepsilon/7 \Big)\\
&\leq \Big(\dfrac {7(p+2)}p\Big)^p\frac{m_n \bbE\Big[\Big|a_{nm_n}^{-1}(F^\leftarrow(uV_1)\mathds{1}_{\{F^\leftarrow(uV_1) \leq a_{nm_n}\delta \}}) \Big|^p\Big]}{\varepsilon^p}\\
& \quad+ \exp\Big(-\dfrac{2}{e^p\,(p+2)^2}\,
\dfrac {\varepsilon^2 }{49 m_n\text{Var}(a_{nm_n}^{-1}F^\leftarrow(uV_1) \mathds{1}_{\{F^\leftarrow(uV_1) \leq a_{nm_n}\delta\}} )}\Big) \\
&\leq c_p^1\,\varepsilon^{-p}m_na_{nm_n}^{-p} \bbE\Big[F^\leftarrow(uV_1)^p \mathds{1}_{\{F^\leftarrow(uV_1) \leq a_{nm_n}\delta\}} \Big]+ \exp\Big(-c_p^2
\dfrac{\varepsilon^2 a_{nm_n}^2}{m_n\text{Var}(F^\leftarrow(uV_1)  )}\Big)\\
&=  A_1 + A_2
\end{align*}  
for some constants $c_p^1,c_p^2>0$ depending only of the order $p$. Lemma \ref{lem:last} again implies for some $p>\alpha \geq 2$
\begin{align*}
A_1 \sim c_p^1\,\varepsilon^{-p} \delta^p m_n \frac{\alpha}{p- \alpha} \bar F(a_{nm_n \delta})=o(1)
\end{align*}
since $\delta^p m_n \bar F(a_{nm_n \delta}) \sim \delta^{p-\alpha}/n \to 0$ as $n\to \infty$ for $p>\alpha$. It remains to deal with the second term $A_2$. Since $\alpha \geq 2$, under the condition $a^2_{nm_n}/m_n \to \infty$ then $A_2 \to 0$, which ends the proof of the case $ \alpha\geq 2$ with $\mathrm{Var}(X_1)<\infty$. This proves Equations \eqref{tail_cond4t} and \eqref{tail_cond3} and concludes the proof of Theorem \ref{theo:RV-D2}.
\end{proof}

\subsection{Proofs related to Section~\ref{sec:app2}}

\begin{proof}[Proof of Proposition~\ref{prop:reinsurance}]
The proof involves three steps. We start by proving regular variation of $X_{N-k:N}$ and then we show that the tail distribution of $\sum_{i=1}^{N-k-1} X_{i:N}$ is negligeable regarding the tail of $X_{N-k:N}$. In Step 3, we finally show 
\begin{align}\label{EquivRV}
\mathbb{P}\left(  \sum_{i=1}^{N-k} X_{i:N} >x\right)   \underset{x \to \infty}{\sim} \mathbb{P}\left(  X_{N-k:N} >x\right)  .
\end{align}
Recall the notation: $F^{\leftarrow} $ denotes the quantile function and $ U_{1,n}\leq \cdots \leq U_{n,n}$ the order statistics of a iid sample $U_1,\ldots,U_n$  uniformly distributed on $[0,1]$.\newline

\textbf{Step 1}. For $x>0$, we have 
\begin{align*}
\mathbb{P}\left(  X_{N-k:N} >x\right)= \sum_{n=0}^{\infty} \mathbb{P}\left(  X_{n-k:n} >x\right) \mathbb{P}(N=n)=\sum_{n=0}^{\infty}\mathbb{P}\left( S_{n,x} >k\right) \mathbb{P}(N=n)
\end{align*}
where $S_{n,x} \sim \mathcal{B}(n, 1-F(x))$ is a Binomial random variable.
Since $\mathbb{P}(S_{n,x} \geq k+1)\underset{x \to \infty}{\sim} \mathbb{P}(S_{n,x} = k+1)$   and 
$$\frac{\mathbb{P}(S_{n,x} = k+1)}{(1-F(x))^{k+1}} \leq \dbinom{n}{k+1}= \frac{n^{[k+1]}}{(k+1)!}, \  \ \ x>0 $$
Lebesgue's theorem yields 
\begin{align*}
\sum_{n=0}^{\infty} \frac{\mathbb{P}\left(  S_{n,x} >x\right) }{(1-F(x))^{k+1}}\mathbb{P}(N=n) \underset{x \to \infty}{\longrightarrow}  \sum_{n=0}^{\infty} \frac{ n^{[k+1]} }{(k+1)!}\mathbb{P}(N=n)= \frac{1}{(k+1)!}\mathbb{E}[N^{[k+1]}].
\end{align*}
Then, 
\begin{align}\label{RVorder}
\lim_{x \to \infty} \frac{\mathbb{P}\left(  X_{N-k:N} >x\right)}{(1-F(x))^{k+1}}= \frac{1}{(k+1)!}\mathbb{E}[N^{[k+1]}]< \infty
\end{align}
under the assumption $\mathbb{E}[N^{k+1}]<\infty$. Since $X$ is regularly varying with index $\alpha$, Equation \eqref{RVorder} proves that $ X_{N-k:N}$ is regularly varying with index $(k+1) \alpha$.\newline

\textbf{Step 2}. For $x>0$, we have
\begin{align*}
\mathbb{P}\left( \sum_{i=1}^{N-{k-1}}  X_{i:N} >x \right) = \sum_{n=k}^{\infty} \mathbb{E} \left[  \mathbb{P}\left( \sum_{i=1}^{n-{k-1}}  F^{\leftarrow} (U_{i:n}) \geq  x \ | U_{n-k:n} \right) \right]\mathbb{P}(N=n) .
\end{align*}
By independence and Markov inequality to the order $p$, we obtain 

\begin{align*}
\mathbb{P}\left( \sum_{i=1}^{n-{k-1}}  F^{\leftarrow} (U_{i:n}) \geq  x \ | U_{n-k:n}=u \right) & \leq x^{-p} \mathbb{E} \left[\left( \sum_{i=1}^{n-{k-1}} F^{\leftarrow} (U_{i:n})\right)^p \ | \ U_{n-k:n}=u \right]\\
& = x^{-p} \mathbb{E} \left[ \left( \sum_{i=1}^{n-k-1} F^{\leftarrow} (uU_i) \right)^p  \right]\\
& \leq x^{-p} n^p \int_{0}^1 F^{\leftarrow} (uv)^p dv.
\end{align*}

We have for $u \geq 0$

$$\int_{0}^1 F^{\leftarrow} (uv)^p dv \leq M  F^{\leftarrow} (u)^p (1-u).$$

Integrating with respect to $u$, it follows that 
\begin{align*}
\mathbb{P}\left( \sum_{i=1}^{n-k-1} X_{i:n} >x\right) & \leq \mathbb{E} \left[  \mathbb{P}\left( \sum_{i=1}^{n-k-1}  F^{\leftarrow} (U_{i:n}) \geq  x \ | U_{n-k:n} \right) \right]\\
& \leq x^{-p} n^p M \mathbb{E}\left[ F^{\leftarrow} (U_{n-k,n})^p (1-U_{n-k:n}) \right]\\
& \leq x^{-p} n^p M  \mathbb{E} \left[ X_{n-k:n}^p (1-F(X_{n-k:n})) \right]
\end{align*}
as $F(F^{\leftarrow}(u))\leq u$ for all $u>0$. From Step 1, $X_{n-k:n}$ is regularly varying with index $(k+1) \alpha$. Since $X$ is regularly varying with index $ \alpha$, then $ \mathbb{E} \left[ X_{n-k:n}^p (1-F(X_{n-k:n})) \right]=c < \infty$ for $p <k\alpha$. Integrating now with respect to $n$, for $p>(k+1)\alpha$ we have

\begin{align*}
\mathbb{P}\left( \sum_{i=1}^{N-k-1}  X_{i:N} >x \right)&  \leq \sum_{n=k}^{\infty} x^{-p} n^p M  \mathbb{E} \left[ X_{n-k:n}^p (1-F(X_{n-k:n})) \right]\mathbb{P}(N=n) \\
& \leq x^{-p} M c\sum_{n=k+2}^{\infty}  n^p \mathbb{P}(N=n) \\
& =o((1-F(x))^{k+1})
\end{align*}
under the assumption $\mathbb{E}[N^p]<\infty$ for $p>(k+1) \alpha$. This proves $\mathbb{P}\left( \sum_{i=1}^{N-k-1} X_{i:n} >x\right)=o\left(\mathbb{P}\left(  X_{N-k:N} >x\right)\right)$ and concludes Step 2.\newline

\textbf{Step 3}. Obviously we have $\mathbb{P}\left(  \sum_{i=1}^{N-k} X_{i:N} >x\right)    \geq \mathbb{P}\left(  X_{N-k:N} >x\right) $
for $x>0$, it remains to prove that 

$$\limsup_{x \to \infty} \frac{\mathbb{P}\left(  \sum_{i=1}^{N-k} X_{i:N} >x\right) }{\mathbb{P}\left(  X_{N-k:N} >x\right)  } \leq 1.$$

For any $\varepsilon >0$ and $x\geq 0$ it holds 
\begin{align*}
& \mathbb{P}\left(  \sum_{i=1}^{N-k} X_{i:N} >x\right)\\
& = \mathbb{P}\left(  \sum_{i=1}^{N-k} X_{i:N} >x, \sum_{i=1}^{N-k-1} X_{i:N} > \varepsilon x \right) +  \mathbb{P}\left(  \sum_{i=1}^{N-k} X_{i:N} >x, \sum_{i=1}^{N-k-1} X_{i:N} \leq \varepsilon x \right)\\
& \leq \mathbb{P}\left(  \sum_{i=1}^{N-k-1} X_{i:N} > \varepsilon x\right) + \mathbb{P}\left(  X_{N-k:N} >(1 - \varepsilon)x\right).
\end{align*}

On the one hand, we have 
\begin{align}\label{RVeq1}
\frac{\mathbb{P}\left(  \sum_{i=1}^{N-k-1} X_{i:N} > \varepsilon x\right)}{\mathbb{P}\left(  X_{N-k:N} > x\right)} & =
\frac{\mathbb{P}\left(  \sum_{i=1}^{N-k-1} X_{i:N} >\varepsilon x\right)}{\mathbb{P}\left(  X_{N-k:N} >\varepsilon x\right)} \cdot \frac{\mathbb{P}\left(  X_{N-k:N} >\varepsilon x\right)}{\mathbb{P}\left(  X_{N-k:N} > x\right)} \underset{x \to \infty}{\longrightarrow} 0
\end{align}
since $\mathbb{P}\left( \sum_{i=1}^{N-k-1} X_{i:n} >x\right)=o\left(\mathbb{P}\left(  X_{N-k:N} >x\right)\right)$ from Step 2 and since $ X_{N-k:N}$ is regularly varying with index $(k+1) \alpha$ from Step 1. 
On the other hand , regular variation of $ X_{N-k:N}$  also yields
\begin{align}\label{RVeq2}
 \frac{\mathbb{P}\left(  X_{N-k:N} > (1+ \varepsilon) x \right)}{\mathbb{P}\left(  X_{N-k:N} > \varepsilon x\right)} \longrightarrow (1 +\varepsilon)^{-(k+1) \alpha}.
\end{align}
Combining Equations \eqref{RVeq1} and \eqref{RVeq2}, we obtain $$\limsup_{x \to \infty} \frac{\mathbb{P}\left(  \sum_{i=1}^{N-k} X_{i:N} >x\right) }{\mathbb{P}\left(  X_{N-k:N} >x\right)  } \leq (1 + \varepsilon)^{-(k+1) \alpha}   \underset{\varepsilon \to 0}{\longrightarrow} 1.$$
This proves Equation \eqref{EquivRV} and concludes the proof of Proposition \ref{prop:reinsurance}.
\end{proof}

\begin{proof}[Proof of Corollary~\ref{cor:reinsurance}]
Proposition~\ref{prop:reinsurance} together with Equation~\ref{eq:dist_expl} imply that, for all $\varepsilon>0$,
\begin{align*}
n^{k+1}\mathbb{P}(d(a_n^{-1}R,\mathbb{J}_k)>\varepsilon)&= n^{k+1}\mathbb{P}(a_n^{-1}\sum_{i=1}^{N-k+1}X_{i:N}>\varepsilon )\\
&\sim  n^{k+1}\mathbb{P}(a_n^{-1}X_{N+1-k:N}>\varepsilon )=n^{k+1}\mathbb{P}(d(a_n^{-1}R,\mathbb{D}_k)>\varepsilon/2)\\
&\to \frac{1}{k!}\mathbb{E}[N^{[k]}]\varepsilon^{-\alpha k}.
\end{align*}
Since $\{d(a_n^{-1}R,\mathbb{D}_k)>\varepsilon/2 \}\subset \{d(a_n^{-1}R,\mathbb{J}_k)>\varepsilon\}$, we deduce
\begin{align}
&n^{k+1}\mathbb{P}\left(d(a_n^{-1}R,\mathbb{J}_k)>\varepsilon,d(a_n^{-1}R,\mathbb{D}_k)\leq \varepsilon/2 \right)\nonumber\\
&= n^{k+1}\Big(\mathbb{P}\left(d(a_n^{-1}R,\mathbb{J}_k)>\varepsilon\right)-\mathbb{P}\left(d(a_n^{-1}R,\mathbb{D}_k)> \varepsilon/2 \right)\Big)\to 0,\quad \mbox{as $n\to\infty$}.\label{eq:negligeable}
\end{align}
We need to prove that, as $n\to\infty$, 
\begin{equation}\label{eq:toprove}
n^{k+1}\mathbb{E}\left[ f(a_n^{-1}R)\right]\longrightarrow \int_{\mathbb{D}}f(r)\mu^{\#}_{k+1}(\rmd r)
\end{equation}
for all continuous bounded functions $f:\mathbb{D}\to\mathbb{R}$ with support bounded away from $\mathbb{J}_k$.
Let $\varepsilon>0$ such that $f$ vanishes on an $\varepsilon$-neighborhood of $\mathbb{J}_k$. We have 
\begin{align*}
&n^{k+1}\mathbb{E}\left[ f(a_n^{-1}R)\right]\\
&=n^{k+1}\mathbb{E}\left[ f(a_n^{-1}R)\mathds{1}_{\{ d(a_n^{-1}R,\mathbb{D}_k)>\varepsilon/2\}}\right]+n^{k+1}\mathbb{E}\left[ f(a_n^{-1}R)\mathds{1}_{\{d(a_n^{-1}R,\mathbb{J}_k)>\varepsilon, d(a_n^{-1}R,\mathbb{D}_k)\leq \varepsilon/2\}}\right]\\
&= n^{k+1}\mathbb{E}\left[ f(a_n^{-1}R)\mathds{1}_{\{ d(a_n^{-1}R,\mathbb{D}_k)>\varepsilon/2\}}\right]+o(1).
\end{align*}
The $o(1)$ term is justified by Equation~\eqref{eq:negligeable} and the boundedness of $f$. Theorem~\ref{theo:RV-D} Equation~\eqref{eq:thm:RV-D} provides the asymptotic
\[
n^{k+1}\mathbb{E}\left[ f(a_n^{-1}R)\mathds{1}_{\{ d(a_n^{-1}R,\mathbb{D}_k)>\varepsilon/2\}}\right]\longrightarrow \int_{\mathbb{D}}f(r)\mathds{1}_{\{ d(r,\mathbb{D}_k)>\varepsilon/2\}}\mu^{\#}_{k+1}(\rmd r)
\]
because the function $r\in\mathbb{D}\mapsto f(r)\mathds{1}_{\{ d(r,\mathbb{D}_k)>\varepsilon/2\}}$ is bounded and continuous $\mu^{\#}_{k+1}(\rmd r)$-a.e. with support bounded away from $\mathbb{D}_k$ . Note that, by Lemma~\ref{lem:dist-Dk}, the discontinuity set is included in $\{r\in\mathbb{D}:\Delta_{k+1}(r)=\varepsilon/2\}$ and that the Lebesgue measure involved in the definition of $\mu^{\#}_{k+1}$ has no atom. Finally, we have
\[
\int_{\mathbb{D}}f(r)\mathds{1}_{\{ d(r,\mathbb{D}_k)>\varepsilon/2\}}\mu^{\#}_{k+1}(\rmd r)=\int_{\mathbb{D}}f(r)\mu^{\#}_{k+1}(\rmd r)
\]
because $f(r)\mathds{1}_{\{ d(r,\mathbb{D}_k)\leq \varepsilon/2\}}=0$ $\mu_{k+1}(\rmd r)$-a.e. This concludes the proof of Equation~\eqref{eq:toprove} and Corollary~\ref{cor:reinsurance}.
\end{proof}

\begin{proof}[Proof of Proposition~\ref{prop:reinsurance2}]
The regular variation result stated in point $i)$ for $R_k^-=\sum_{i=1}^{N-k}X_{i:N}$ follows from  Proposition~\ref{prop:reinsurance}. 

The conditional limit theorem stated in point $ii)$ is a consequence of Corollary~\ref{cor:reinsurance} together with Proposition~\ref{prop:cond_limit_thm}. Indeed, since $R_k^-=d(R,\mathbb{J}_k)$, we have
\[
\mathbb{P}(x^{-1}R\in\cdot\mid R_k^{-}>x)=\mathbb{P}(x^{-1}R\in\cdot\mid x^{-1}R\in A)
\]
with $A=\{r\in\mathbb{D}:d(r,\mathbb{J}_k)>1\}$. Then, Corollary~\ref{cor:reinsurance} together with Proposition~\ref{prop:cond_limit_thm} imply
\[
\mathbb{P}(x^{-1}R\in\cdot\mid x^{-1}R\in A)\stackrel{d}\longrightarrow \frac{\mu^{\#}_{k+1}(A\cap \cdot)}{\mu^{\#}_{k+1}(A)}
\]
because $A$ is bounded away from $\mathbb{J}_k$ and such that $\mu^{\#}_{k+1}(A)=M_{k+1}([0,\infty)^{k+1})>0$ and $ \mu^{\#}_{k+1}( \partial A)=\mu^{\#}_{k+1}(\{r\in\mathbb{D}:d(r,\mathbb{J}_k)=1\})=0$. Using the definition~\eqref{eq:thm:RV-D-mu} of $\mu^{\#}_{k+1}$, the expression of the limiting distribution $\mu^{\#}_{k+1}(A\cap \cdot)/\mu^{\#}_{k+1}(A)$ is easily deduced as in statement $ii)$.

Proof of statement $iii)$. The heuristic is that if the residual risk is large at $t_0$, then there are at least $k+1$ large claims between $0$ and $t_0$. If the residual risk is even larger at $t_1$, then some large claim must have occurred between $t_0$ and $t_1$, leading to a total of at least $k+2$ large claims between $0$ and $t_1$. This is formalized as follows.
Noting that $R_k^-(t_0)=d(R_k(\cdot\wedge t_0),\mathbb{J}_k)$ and similarly  $R_k^-(t_1)=d(R_k(\cdot\wedge t_1),\mathbb{J}_k)$, we consider $A_\varepsilon=\{r\in\mathbb{D}:d(r(\cdot\wedge t_0),\mathbb{J}_k)\in (1,1+\varepsilon)\}$ and $B_u=\{r\in\mathbb{D}:d(r(\cdot\wedge t_1),\mathbb{J}_k)>u\}$. With these notations, for $\varepsilon>0$ and $u>0$, we have
\begin{align*}
\mathbb{P}(R_k^-(t_1)>ux\mid x<R_k^-(t_0)<(1+\varepsilon)x)
%&=\frac{\mathbb{P}(x<R_k^-(t_0)<(1+\varepsilon)x,R_k^-(t_1)>ux)}{\mathbb{P}(x<R_k^-(t_0)<(1+\varepsilon)x)}\\
&=\frac{\mathbb{P}(x^{-1}R\in A_\varepsilon\cap B_u)}{\mathbb{P}(x^{-1}R\in A_\varepsilon)}.
\end{align*}
Because $A_\varepsilon$ and $A_\varepsilon\cap B_u$ are bounded away from $\mathbb{J}_k$ and  $\mathbb{J}_{k+1}$ respectively and because the continuity of $M_{k+1}$ and $M_{k+2}$ ensures that $A$ and $B$ are continuity sets for $\mu^{\#}_{k+1}$ and $\mu^{\#}_{k+2}$ respectively, Corollary~\ref{cor:reinsurance} implies
\[
\mathbb{P}(x^{-1}R\in A_\varepsilon)\sim \mu^{\#}_{k+1}(A_\varepsilon)(1- F(x))^{k+1}
\]
and
\[
\mathbb{P}(x^{-1}R\in A_\varepsilon\cap B_u)\sim \mu^{\#}_{k+2}(A_\varepsilon\cap B_u)(1- F(x))^{k+2}.
\]
As a consequence, the quotient satisfies
\begin{equation}\label{eq:proof1}
\lim_{x\to\infty}\frac{ \mathbb{P}(R_k^-(t_1)>ux\mid x<R_k^-(t_0)<(1+\varepsilon)x)}{1-F(x)}= \frac{\mu^{\#}_{k+2}(A_\varepsilon\cap B_u)}{\mu^{\#}_{k+1}(A_\varepsilon)}.
\end{equation}
The quantity $\mu^{\#}_{k+1}(A_\varepsilon)$ is computed considering $k+1$ claims $((s_i,z_i))_{1\leq i\leq k+1}$ with occurrence times $s_i\leq t_0$ and magnitudes $z_i$ satisfying  $1<\min_{1\leq i\leq k+1} z_i<1+\varepsilon$, yielding the integral form
\begin{align*}
\mu^{\#}_{k+1}(A_\varepsilon)
&=\frac{1}{(k+1)!}\int_{E^{k+1}} \mathds{1}_{\{1<\min_{1\leq i\leq k+1} z_i<1+\varepsilon, s_{i}\leq t_0, 1\leq i\leq k+1\}}M_{k+1}(\rmd s) \mu^{\otimes (k+1)}(\rmd z)\\
&=\frac{1}{(k+1)!}M_{k+1}\left([0,t_0]^{k+1}\right)\int_{(1,\infty)^{k+1}} \mathds{1}_{\{1<\min_{1\leq i\leq k+1} z_i<1+\varepsilon\}} \mu^{\otimes (k+1)}(\rmd z).
\end{align*}
In the integral, the domain can be restricted to $(1,\infty)^{k+1}$ because the minimum is larger than $1$ in the indicator function. Since $\mathds{1}_{z>1}\mu(\rmd z)$ is the $\alpha$-Pareto distribution, the integral is equal to 
\[
\bbP\left(\min_{1\leq i\leq k+1}Z_i<1+\varepsilon\right)=1- (1+\varepsilon)^{-(k+1)\alpha}\sim (k+1)\alpha\varepsilon\quad \mbox{as $\varepsilon\to 0$},
\]
where $(Z_i)_{1\leq i\leq k+1}$ are independent random variables with standard $\alpha$-Pareto distribution. We deduce 
\begin{equation}\label{eq:proof2}
\mu^{\#}_{k+1}(A_\varepsilon)\sim \frac{1}{k!}M_{k+1}\left([0,t_0]^{k+1}\right)\alpha\varepsilon.
\end{equation}
The quantity $\mu^{\#}_{k+2}(A_\varepsilon\cap B_u)$ is computed considering $k+2$ claims $((s_i,z_i))_{1\leq i\leq k+2}$ with $k+1$ of those  occurring before $t_0$ and one between $t_0$ and $t_1$. The $s_i$ are not ordered and there are $k+2$  cases according to which event occurs last. By symmetry, we consider only the case when the last event corresponds to the index $i=k+2$. The magnitudes must then satisfy $1<\min_{i\leq i\leq k+1} z_i<1+\varepsilon$ so that $A_\varepsilon$ is satisfied and $ z_{1:k+2}+z_{2:k+2}>u$ so that $B_u$ is satisfied. This yields, 
\begin{equation}\label{eq:proof3}
\mu^{\#}_{k+2}(A_\varepsilon\cap B_u)=\frac{(k+2)}{(k+2)!}M_{k+2}\left([0,t_0]^{k+1}\times (t_0,t_1] \right)I(\varepsilon)
\end{equation}
with
\[
I(\varepsilon)= \int_{(0,\infty)^{k+2}}\mathds{1}_{\{ 1<\min_{i\leq i\leq k+1} z_i<1+\varepsilon, z_{1:k+2}+z_{2:k+2}>u \}}\mu^{\otimes (k+2)}(\rmd z).
\]
We claim that, as $\varepsilon\to 0$,
\begin{equation}\label{eq:proof4}
I(\varepsilon)\sim \left( (u-1)^{-(k+1)\alpha}+\left((u-1)^{-\alpha}-1\right)_+\right)(k+1)\alpha\varepsilon.
\end{equation}
Then, Equation~\eqref{eq:item3} follows from Equations (\ref{eq:proof1}-\ref{eq:proof4}). 

It only remains to prove  Equation~\eqref{eq:proof4}. 
The  integral $I(\varepsilon)$ can be decomposed into two integrals $I_1(\varepsilon)+I_2(\varepsilon)$, corresponding to $z_{k+2}>1$ and  $z_{k+2}<1$ respectively. In the  case $z_{k+2}>1$,  the $z_i$'s are  larger than $1$ and 
\[
I_1(\varepsilon)=\mathbb{P}\left(1<\min_{i\leq i\leq k+1} Z_i<1+\varepsilon, Z_{1:k+2} +Z_{2:k+2}>u\right).
\]
We decompose $I_1(\varepsilon)$ as a sum of two terms according to $Z_{k+2}< \min_{i\leq i\leq k+1} Z_i$ or $Z_{k+2}> \min_{i\leq i\leq k+1} Z_i$. The first contribution is upper bounded by
\[
\mathbb{P}\left(1<Z_{k+2}< \min_{i\leq i\leq k+1} Z_i<1+\varepsilon\right)=o(\varepsilon^2)
\]
and is negligible. The second contribution corresponds to 
\begin{align*}
&\mathbb{P}\left(1<\min_{i\leq i\leq k+1} Z_i<1+\varepsilon, Z_{1:k+2} +Z_{2:k+2}>u,Z_{k+2}> \min_{i\leq i\leq k+1} Z_i\right)\\
&=\mathbb{P}\left(1<Z_{1:k+2}<1+\varepsilon, Z_{1:k+2} +Z_{2:k+2}>u,R_{k+2}\neq 1\right)
\end{align*}
where $R_{k+2}$ denotes the rank of $Z_{k+2}$ among $Z_1,\ldots, Z_{k+2}$ and $(Z_{i:k+2})_{1\leq i\leq k+2}$ the order statistics. Since $R_{k+2}$ is uniform on $\{1,\ldots,k+2\}$ and independent of the order statistics, we obtain
\begin{align*}
I_1(\varepsilon)&=\bbP(1<Z_{1:k+2}<1+\varepsilon,Z_{1:k+2}+Z_{2:k+2}>u,R_{n+2}\neq 1)+o(\varepsilon^2)\\
&=\frac{k+1}{k+2}\bbP(1<Z_{1:k+2}<1+\varepsilon,Z_{1:k+2}+Z_{2:k+2}>u)+o(\varepsilon^2).
\end{align*}
The joint density of $(Z_{1:k+2},Z_{2:k+2})$ is given by (see for instance \cite{ahsanullah2013introduction}, Chapter 2)  
\[
(k+1)(k+2)\alpha^2z_1^{-\alpha-1}z_2^{-(k+1)\alpha-1}\mathds{1}_{\{1<z_1<z_2\}},
\]
so that, for $u>1$,
\begin{align*}
&\bbP(1<Z_{1:k+2}<1+\varepsilon,Z_{1:k+2}+Z_{2:k+2}>u)\\
&= \int_{z_1=1}^{1+\varepsilon} \int_{z_2=(u-z_1)\vee z_1}^\infty (k+1)(k+2)\alpha^2z_1^{-\alpha-1}z_2^{-(k+1)\alpha-1}\rmd z_1\rmd z_2\\
&=\int_{z_1=1}^{1+\varepsilon}(k+2)\alpha z_1^{-\alpha-1}((u-z_1)\vee z_1)^{-(k+1)\alpha}\rmd z_1\\
&=(k+2)\alpha(u-1)^{-(k+1)\alpha}\varepsilon+o(\varepsilon).
\end{align*}
Gathering the previous estimates, we obtain
\[
I_1(\varepsilon)\sim (u-1)^{-(k+1)\alpha}(k+1)\alpha\varepsilon.
\]
We now consider the contribution $I_2(\varepsilon)$ that corresponds to the case $z_{k+2}<1$. Then, $z_{1:k+2}=z_{k+2}$ and $z_{2:k+2}=\min_{1\leq i\leq k+1}z_i$ so that
\begin{align*}
I_2(\varepsilon)
&= \int_{(1,\infty)^{k+1}}\int_{z_{k+2}=0}^1\mathds{1}_{\{ 1<\min_{i\leq i\leq k+1} z_i<1+\varepsilon, z_{k+2}>u-\min_{i\leq i\leq k+1}z_i \}}\mu^{\otimes (k+2)}(\rmd z)\\
&=\mathbb{E}\left[\left((u-\min_{1\leq i\leq k+1} Z_i)^{-\alpha}-1\right)_+\mathds{1}_{\{1<\min_{i\leq i\leq k+1} Z_i<1+\varepsilon\}} \right]\\
&= \left((u-1)^{-\alpha}-1\right)_+(k+1)\alpha\varepsilon+o(\varepsilon).
\end{align*}
Note that the main term vanishes when $u\geq 2$ which reflect the fact the residual loss cannot double if $z_{k+2}< \min_{1\leq i\leq k+1} z_i$. The asymptotic results for  $I_1(\varepsilon)$ and $I_2(\varepsilon)$ imply  Equation~\eqref{eq:proof4} and the proof is complete.\end{proof}

\section{Proofs related to Section~\ref{sec:background}}\label{sec:proof1}
\subsection{Proof of Propositon~\ref{prop:HRV-criterion}}

\begin{proof}[Proof of Propositon~\ref{prop:HRV-criterion}] We proceed as in the proof of Theorem 4.2 in~\cite{B68}. Consider $r>0$ and a $\mu$-continuity set $B\in \cB(E\setminus F^r)$. Possibly replacing $r$ by a smaller value, we can assume that $E\setminus F^r$ is a $\mu$-continuity set (Theorem 2.2 (i) in \cite{LR06} ensures that all but countably many $r>0$ have this property). We have
$$
n^{k}\bbP(a_n^{-1}X \in B)\le n^k\bbP(a_n^{-1}X_{n,m} \in B^\varepsilon) + n^k\bbP(d(a_n^{-1}X_{n,m},a_n^{-1}X)>\varepsilon,a_n^{-1}X \in B)\,.
$$
For $\varepsilon<r/2$,  $B^\varepsilon \in \cB(E\setminus F^{r/2})$ and Assumption $i)$ implies
\[
\limsup_{n\to \infty}n^k\bbP(a_n^{-1}X_{n,m} \in B^\varepsilon)\leq  \mu_m( \mathrm{cl}B^\varepsilon),
\]
so that
$$
\limsup_{n\to \infty}\bbP(a_n^{-1}X \in B)\le \mu_m( \mathrm{cl} B^\varepsilon) +\limsup_{n\to \infty} n^k\bbP(d(a_n^{-1}X_{n,m},a_n^{-1}X)>\varepsilon,a_n^{-1}X \in B)\,.
$$
Letting $m\to\infty$ in the right-hand side and using assumptions ii) and iii), we deduce, 
$$
\limsup_{n\to \infty}n^k\bbP(a_n^{-1}X \in B)\le \mu(\mathrm{cl} B^\varepsilon)\,.
$$
Since $B$ is a $\mu$-continuity set, letting $\varepsilon\downarrow 0$, we obtain by  monotone convergence
$$
\limsup_{n\to \infty}n^k\bbP(a_n^{-1}X \in B)\le \mu(\mathrm{cl} B)=\mu( B )\,.
$$
In order to obtain a lower bound, we use completion and write
$$
n^{k}\bbP(a_n^{-1}X \in B)= n^{k}\bbP(a_n^{-1}X \in E\setminus F^r) - n^{k}\bbP(a_n^{-1}X \in E\setminus F^r\cap B^c)\,.
$$
Thanks to the previous bound on the limsup, 
\begin{align*}
\liminf_{n\to \infty}n^{k}\bbP(a_n^{-1}X \in B)&\ge  \liminf_{n\to \infty} n^{k}\bbP(a_n^{-1}X \in E\setminus F^r) - \lim \sup_{n\to \infty}n^{k}\bbP(a_n^{-1}X \in E\setminus F^r\cap B^c)\\
&\ge \liminf_{n\to \infty} n^{k}\bbP(a_n^{-1}X \in E\setminus F^r) - \mu(E\setminus F^r\cap B^c).
\end{align*}
 Moreover, 
\begin{align*}
&\bbP(a_n^{-1}X_{n,m}\in E\setminus F^{r+\varepsilon})\\
&= \bbP(a_n^{-1}X \in  F^r, a_n^{-1}X_{n,m} \in E\setminus F^{r+\varepsilon})
+\bbP(a_n^{-1}X \in  E\setminus F^r, a_n^{-1}X_{n,m} \in E\setminus F^{r+\varepsilon})\\
  &\le \bbP(d(a_n^{-1}X_{n,m},a_n^{-1}X)>\varepsilon, a_n^{-1}X_{n,m} \in E\setminus F^{r+\varepsilon})+\bbP(a_n^{-1}X \in  E\setminus F^r),
\end{align*}
whence
$$
\bbP(a_n^{-1}X \in  E\setminus F^r)\ge \bbP(a_n^{-1}X_{n,m} \in E\setminus F^{r+\varepsilon}) - \bbP(d(a_n^{-1}X_{n,m},a_n^{-1}X)>\varepsilon, a_n^{-1}X_{n,m} \in E\setminus F^{r+\varepsilon})\,.
$$
Using assumptions i), ii) and iii) with the same reasoning as above, we get
$$
 \liminf_{n\to \infty} n^{k}P(a_n^{-1}X \in E\setminus F^r) \ge  \mu (\mathrm{int} E\setminus F^{r+\varepsilon})\,.
 $$
Letting $\varepsilon\downarrow 0$,  monotone convergence entails 
$$
 \liminf_{n\to \infty} n^{k}P(a_n^{-1}X \in E\setminus F^r) \ge  \mu (\mathrm{int} E\setminus F^r )=\mu ( E\setminus F^r )
 $$
 as $E\setminus F^r$ is a $\mu$-continuity set.  
 Finally, we have obtained
 $$
\mu (B )\le \liminf_{n\to \infty} n^{k}P(a_n^{-1}X \in B) \le   \lim \sup_{n\to \infty} n^{k}P(a_n^{-1}X \in B)= \mu (B )\,,
 $$
proving the desired convergence $ n^{k}P(a_n^{-1}X \in \cdot)\longrightarrow \mu(\cdot)$ in $\bbM(E\setminus F)$.
\end{proof}

\subsection{Proof of Theorem~\ref{theo:cv-N-Nk}}
The proof of Theorem~\ref{theo:cv-N-Nk} requires a good understanding of the distance to the cone $\cN_k$ in $\cN$. The  following lemma generalizes  inequalities (3.3) and (3.4) in \cite{DHS18}. It characterizes the distance of a point measure $\pi \in\mathcal{N}$ to the cone $\cN_k$. For  $\pi=\sum_{i\geq 1}\varepsilon_{(t_i,x_i)}$, we define  $\|\pi\|_{k+1}$ the $(k+1)$-th largest distance within $\{d_\cX(0,x_i),\ i\geq 1\}$, with the convention $\|\pi\|_{k+1}=0$ if $\pi$ has less than $k$ points.

\begin{lemma}\label{lem:dist_Nk} Let $\rho$ be the distance defined in \eqref{distance_rho} and let $\pi\in\mathcal{N}$. Then, for all $k\geq 0$, 
	\[
	\frac{1}{2}\left(\|\pi\|_{k+1}\wedge 1 \right)\leq \rho(\pi,\cN_k)\leq \|\pi\|_{k+1}.
	\]
\end{lemma}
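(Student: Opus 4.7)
The plan is to exploit the product structure of the distance $\rho$ by analyzing, for each radius $s\geq 0$, the Prohorov distance between the restrictions of $\pi$ and of a candidate $\pi_0\in\cN_k$ to $E\setminus F^s$. Write $\Delta_1\geq \Delta_2\geq\cdots$ for the decreasing rearrangement of $\{d_\cX(0,x_i)\}$ so that $\|\pi\|_{k+1}=\Delta_{k+1}$. The two sides of the inequality will be obtained by comparing the masses $\pi^s(E\setminus F^s)$ and $\pi_0^s(E\setminus F^s)$, which count the points with distance $\geq s$: this count is a step function in $s$ that jumps precisely at the values $\Delta_i$.

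For the upper bound, the idea is to use as candidate $\pi_0$ the restriction of $\pi$ to $E\setminus F^r$ for any $r>\Delta_{k+1}$. Since only at most $k$ of the $\Delta_i$ exceed $\Delta_{k+1}$, $\pi_0$ has at most $k$ atoms and belongs to $\cN_k$. For $s\geq r$, every atom of $\pi^s$ already lies in $E\setminus F^r$, so $\pi_0^s=\pi^s$ and $\rho_s(\pi^s,\pi_0^s)=0$. For $s<r$ one uses only the trivial bound $\rho_s\wedge 1\leq 1$. Plugging into \eqref{distance_rho} gives $\rho(\pi,\pi_0)\leq \int_0^r e^{-s}\rmd s=1-e^{-r}$; letting $r\downarrow \Delta_{k+1}$ yields $\rho(\pi,\cN_k)\leq 1-e^{-\Delta_{k+1}}\leq \Delta_{k+1}$.

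For the lower bound, fix an arbitrary $\pi_0\in\cN_k$ and any $s<\Delta_{k+1}$. The point measure $\pi^s$ has at least $k+1$ atoms (the first $k+1$ points of $\pi$ all lie in $E\setminus F^s$), whereas $\pi_0^s$ has at most $k$ atoms since $\pi_0\in\cN_k$. Testing the defining inequality of the Prohorov metric with $A=E\setminus F^s$, one has $(A)^\varepsilon\subseteq E\setminus F^s$, so any admissible $\varepsilon$ must satisfy $k+1\leq \pi^s(A)\leq \pi_0^s(A^\varepsilon)+\varepsilon\leq k+\varepsilon$, forcing $\rho_s(\pi^s,\pi_0^s)\geq 1$ and hence $\rho_s\wedge 1=1$ throughout $[0,\Delta_{k+1})$. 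Integrating yields $\rho(\pi,\pi_0)\geq 1-e^{-\Delta_{k+1}}$. The elementary inequality $1-e^{-x}\geq \tfrac12(x\wedge 1)$ (valid on $[0,\infty)$, since $1-e^{-x}\geq x-x^2/2\geq x/2$ for $x\leq 1$ and $1-e^{-x}\geq 1-1/e\geq 1/2$ for $x\geq 1$) then concludes. Taking the infimum over $\pi_0\in\cN_k$ gives the desired lower bound.

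There is no real obstacle in this argument; the only minor care is the handling of possible ties at the level $\Delta_{k+1}$, which is sidestepped by working with strict inequalities $s<\Delta_{k+1}$ and $r>\Delta_{k+1}$ and passing to the limit, and the edge case where $\pi$ has fewer than $k+1$ points (so $\|\pi\|_{k+1}=0$ and $\pi\in\cN_k$), which makes both bounds trivial.
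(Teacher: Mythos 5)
Your proof is correct and follows essentially the same route as the paper's: for the upper bound you take as candidate the restriction of $\pi$ to the points at distance above the threshold $\|\pi\|_{k+1}$ and note the restrictions coincide beyond that radius, and for the lower bound you observe that below the threshold the restricted measures have different total point counts, which forces the Prohorov distance to be at least $1$ (you derive this directly from the definition where the paper cites Lemma B.1 of Dombry, Hashorva and Soulier, 2018). The only cosmetic difference is your limiting argument $r\downarrow \Delta_{k+1}$ to handle ties, where the paper restricts to the open set $\{d_\cX(0,x)>r_0\}$ directly.
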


\begin{proof}[Proof of Lemma~\ref{lem:dist_Nk}] For $\pi\in\mathcal{N}$, let $r_0=\|\pi\|_{k+1}$ and denote $\pi_0$ the restriction of $\pi$ to $\{(t,x)\in E: d_\cX(0,x)>r_0\}$. By definition of $r_0$, $\pi_0$ has at most $k$ points, that is $\pi\in\cN_k$. For all $r>r_0$, the restrictions $\pi^{r}$ and $\pi_0^{r}$ coincide so that $\rho_r(\pi^{r},\pi_0^{r})=0$ and 	
\begin{align*}
\rho(\pi,\pi_0)=\int_0^\infty \left\{\rho_r(\pi^{r},\pi_0^{r})\wedge 1\right\} e^{-r}\rmd r  \leq \int_0^{r_0} e^{-r}\rmd r\leq r_0=\|\pi\|_{k+1}.
\end{align*}
Since $\pi_0\in\cN_k$, $\rho(\pi,\cN_k)\leq \|\pi\|_{k+1}$, proving the right-hand side of the inequality. 

On the other hand, for $r<r_0$, the restriction $\pi^{r}$ has at least $k+1$ points. If $\psi\in\cN_k$, that is $\psi$ is a point measure on $E\setminus F$ with  at most $k$ points, then the restrictions $\pi^{r}$ and $\psi^{(r)}$ do not have the same number of points. Then, a straightforward application of  Lemma B.1 in \citealp{DHS18} leads to 
\[
\rho_r(\pi^{r},\psi^{r})\geq |\pi(E\setminus F^r)-\psi(E\setminus F^r)| \geq 1.
\]
We deduce
\[
\rho(\pi,\psi)\geq \int_0^{r_0} e^{-r}\rmd r=1-e^{-\|\pi\|_{k+1}}\geq \frac{1}{2}\left( \|\pi\|_{k+1}\wedge 1\right)
\]
and, $\psi\in\cN_k$ being arbitrary,
\[
\rho(\pi,\cN_k)\geq \frac{1}{2}\left( \|\pi\|_{k+1}\wedge 1\right).
\]
This proves the left-hand side of the inequality and concludes the proof.
\end{proof}

\begin{proof}[Proof of Theorem~\ref{theo:cv-N-Nk}]
The proof of Theorem~\ref{theo:cv-N-Nk} is very similar to the proof of Theorem A.1 in \cite{DHS18}, except that Equations (3.3) and (3.4) are replaced  here by Lemma~\ref{lem:dist_Nk}. This implies, in the same way as Equation (A1) in  \citet[Theorem A.1]{DHS18}, 
\begin{equation}\label{eq:inclusion}
(\cN \setminus\cN_k^r) \subset  \{\pi\in\cN: \|\pi\|_{k+1}> r\} \subset (\cN\setminus\cN_k^{r/2}),\quad r\leq 1.
\end{equation} 
We note also that
\[
\{\pi\in\cN: \|\pi\|_{k+1}> r\}=\{\pi \in\cN:\pi(E\setminus F^r)\geq k+1\}.
\]
According to the Portmanteau Theorem for $\bbM$-convergence \citep[Theorem 2.1]{LRR14}, the convergence $\mu^*_n\to\mu^*$  in $\bbM(\cN\setminus\cN_k)$ is equivalent to the convergence of the restrictions 
$\mu_n^{* r_i}\to\mu^{* r_i}$  in $\bbM_b(\cN\setminus\cN_k^{r_i})$ for each $i$,  for some sequence $r_i\downarrow 0$. Using Equation~\eqref{eq:inclusion}, it is more convenient to consider restrictions $\tilde\mu_n^{* r_i}, \tilde \mu^{* r_i}$ to the subsets $\{\pi(E\setminus F^{r_i})\geq k+1\}$ rather than restrictions $\mu_n^{* r_i}, \mu^{* r_i}$ to the subsets $\cN\setminus\cN_k^{r_i}$. The convergence  $\mu^*_n\to\mu^*$  in $\bbM(\cN\setminus\cN_k)$ is hence equivalent to the weak convergence of   $\tilde\mu_n^{* r_i}\to\mu^{* r_i}$ for each $i$, for some sequence $r_i\downarrow 0$. We then appeal to the characterization of weak convergence in terms of finite-dimensional distributions or Laplace functional \citep[Theorem 3.10 and Corollary 3.11]{Zhao2016}.
 
In the remainder of the proof, we simply point out some differences with respect to the proof of Theorem A.1 in \cite{DHS18}. \\
\textit{Proof of $ \ref{item:convmzero} \Rightarrow \ref{item:convfidi}$:} For $p \geq 1$, let $A_1,\ldots,A_p \in \mathcal{B}_\mu^*$ and $m_1, \ldots, m_p \in \mathbb{N}$ such that $\sum_{i=1}^p m_i \geq k+1$. Define the event $\mathcal{A}:=\{ \pi(A_i)=m_i, 1 \leq i\leq p\}$.  For $r$ small enough, $A_1,\ldots,A_p \in E\setminus F^r$ and we have  
\begin{align*}
\mathcal{A} \subset  \{ \pi : \pi( E\setminus F^r ) \geq k+1 \}   \subset \cN\setminus\cN_k^r,
\end{align*}
whence $\mu_n^{*r}(\mathcal{A})=\mu^*_n(\mathcal{A})$ and $\mu^{*r}(\mathcal{A})=\mu^*(\mathcal{A})$. The convergence $\mu^*_n\to\mu^*$ in $\bbM(\cN\setminus\cN_k)$ implies the weak convergence $\mu_n^{*r}\to\mu^{*r}$ for small $r$, which implies according to \cite{Zhao2016} the finite-dimensional convergence  
\begin{align*}
\mu^*_n(\mathcal{A})=\mu_n^{*r}(\mathcal{A}) \longrightarrow \mu^{*r}(\mathcal{A})=\mu^*(\mathcal{A}).
\end{align*}
This proves $\ref{item:convfidi}$.\\
\textit{Proof of $ \ref{item:convfidi} \Rightarrow \ref{item:convlaplace}$:}  By Lemma \ref{lem:dist_Nk}, \ref{item:convfidi}  implies the finite-dimensional convergence  $\tilde{\mu}_n^{*r}\overset{fidi}{\longrightarrow} \tilde \mu^{*r}$ for all $r >0$ such that $ \{\pi\in\cN :\pi (E\setminus F^r)\geq  k+1\}\in\mathcal{B}_{\mu{^*}}$. Let $r_i \to 0$ be a sequence of such continuity points. From \cite{Zhao2016}, convergence of the finite-dimensional distribution is equivalent to convergence of the Laplace functional 
$$\int_{\mathcal{N}}\rme^{-\pi(f)} \tilde{\mu}_n^{*r_i}(\rmd \pi) 
		\longrightarrow    \int_{\mathcal{N}}\rme^{-\pi(f)} \tilde{\mu}^{* r_i}(\rmd \pi).$$ 
This proves $\ref{item:convlaplace} $.\\
\textit{Proof of $\ref{item:convlaplace} \Rightarrow \ref{item:convmzero}$:} Similar to \cite{DHS18} where the approximating functions can be taken bounded Lipschitz.
\end{proof}

\section*{Acknowledgements}
The authors gratefully acknowledge the two referees and the associate editor for their constructive remarks and suggestions that lead to significant improvement of the paper.
The research of Cl\'ement Dombry is partially supported by the Bourgogne Franche-Comt\'e region (grant OPE-2017-0068).

\bibliographystyle{apalike}
\bibliography{biblio}

\end{document}